\newcommand\myshade{85}
\colorlet{mylinkcolor}{violet}
\colorlet{mycitecolor}{red}
\colorlet{myurlcolor}{cyan}
\newcolumntype{E}{>{\hsize=0.5cm \centering\arraybackslash}X}%
\newcolumntype{C}[1]{>{\hsize=#1\hsize \centering\arraybackslash}X}%
\numberwithin{equation}{section}
\newtheorem{theorem}{Theorem}[section]
\newtheorem{proposition}[theorem]{Proposition}
\newtheorem{proposition-definition}[theorem]{Proposition-Definition}
\newtheorem{corollary}[theorem]{Corollary}
\newtheorem{lemma}[theorem]{Lemma}
\newtheorem{theoremA}{Theorem}
\theoremstyle{definition}
\newtheorem{remark}[theorem]{Remark}
\newtheorem{example}[theorem]{Example}
\newtheorem{definition}[theorem]{Definition}
\newcommand{\End}{\operatorname{End}\nolimits}
\newcommand{\thick}{\mathsf{thick}}
\newcommand{\Hom}{\mathrm{Hom}}
\newcommand{\Ext}{\mathrm{Ext}}
\newcommand{\za}{\alpha}
\newcommand{\zb}{\beta}
\newcommand{\zD}{\Delta}
\newcommand{\zg}{\gamma}
\newcommand{\zG}{\Gamma}
\newcommand{\Z}{\mathbb{Z}}
\newcommand{\aaa}{{\bf{a}}}
\newcommand{\bbb}{{\bf{b}}}
\newcommand{\ccc}{{\bf{c}}}
\newcommand{\ddd}{{\bf{d}}}
\newcommand{\pd}{\operatorname{{\rm pd }}}
\newcommand{\fd}{\operatorname{{\rm fd }}}
\newcommand{\id}{\operatorname{{\rm id }}}
\newcommand{\ma}{\operatorname{{\rm mod-A }}}%
\newcommand{\cals}{\mathcal{S}}
\newcommand{\calc}{\mathcal{C}}
\newcommand{\calh}{\mathcal{H}}
\newcommand{\cald}{\mathcal{D}}
\newcommand{\calm}{\mathcal{M}}
\newcommand{\calp}{\mathcal{P}}
\newcommand{\calk}{\mathcal{K}}
\newcommand{\bbp}{\mathbb{P}}
\newcommand{\dba}{\cald^b(A)}
\newcommand{\ka}{\calk^b(A)}
\newcommand{\kaa}{\calk^{-,b}(proj\text{-}A)}
\newcommand{\iaa}{\calk^{+,b}(inj\text{-}A)}
\renewcommand{\P}{P^\bullet}
\newcommand{\I}{I^\bullet}
\def\s{\stackrel}
\def\s{\stackrel}
\def\wt{\widetilde}
\definecolor{dark-green}{RGB}{14,150,2}
\definecolor{red}{RGB}{250,0,0}
\newcommand{\bpoint}{\circ}
\newcommand{\rpoint}{\color{red}{\bullet}}
\begin{document}

\title[Geometric models for the algebraic hearts of a gentle algebra]{Geometric models for the algebraic hearts in the derived category of a gentle algebra}

\author{Wen Chang}
\address{School of Mathematics and Statistics, Shaanxi Normal University, Xi'an 710062, China}\thanks{The author is supported by the Fundamental Research Funds
	for the Central Universities (No. GK202403003) and the NSF of China (Grant No. 12271321).}
\email{changwen161@163.com}

\keywords{}

\thanks{}
%%\dedicatory{}

%
%\date{\today}

\subjclass[2010]{16D90, %(Module categories in associative algebras)
16E35, %(derived categories for associative rings)
57M50}%(general geometric structures on low-dimensional manifolds)

\begin{abstract}
We give a geometric model for any algebraic heart in the derived category of a gentle algebra, which is equivalent to the module category of some gentle algebra. To do this, we deform the geometric model for the module category of a gentle algebra given in \cite{BC21}, and then embed it into the geometric model of the derived category given in \cite{OPS18}, in the sense that each so-called zigzag curve on the surface represents an indecomposable module as well as the minimal projective resolution of this module.
A key point of this embedding is to give a geometric explanation of the duality between the simple modules and the projective modules.
%We give a geometric model for any algebraic heart in the derived category of a gentle algebra, by embedding the geometric model of the module category of a gentle algebra into the geometric model of the associated derived category, in the sense that each so-called zigzag curve on the surface represents an indecomposable module as well as the minimal projective resolution of this module, with respect to different `coordinates'.

Such a blend of two geometric models provides us with a handy way to describe the homological properties of a module within the framework of the derived category. In particular, we realize any higher Yoneda-extension as a polygon on the surface, and realize the Yoneda-product as gluing of these polygons. As an application, we realize any algebraic heart in the derived category of a gentle algebra on the marked surface.
\end{abstract}

\maketitle
\setcounter{tocdepth}{2} %contents length

\tableofcontents

\section*{Introduction}\label{Introductions}

In recent years, topological and geometric methods have been used more and more widely in the representation theory of algebras.
There are many works on geometric models for various categories associated with an algebra.
A geometric model allows one to use surface combinatorics/topology to produce many applications, i.e.
realize arcs as indecomposables, intersections as morphisms, rotations of arcs as Auslander-Reiten translations, cuts of the surfaces as reductions of the categories, etc.

It has been shown that the surface models are powerful tools to solve a number of representation-theoretic questions on the module/derived categories of algebras, for example, classifying torsion pairs \cite{BBM14,CD20,CP16,HJR11}; giving the derived invariants of algebras \cite{AG16,ALP21,APS23,D14,O19,OZ22,LP20}; confirming the connectedness of support $\tau$-tilting graphs \cite{FGLZ23} and studying silting theory and the theory of exceptional sequences \cite{APS23,CJS22,CHS23,CS23a,CS23b} etc.

The use of a surface model to study the representations of an algebra originates from the cluster theory.
In \cite{FST08}, the authors discover a cluster algebra structure on an oriented surface with boundaries and marked points. They interpret a cluster of the cluster algebra as a triangulation of the surface and a mutation of the cluster as a flip of the triangulation.

Inspired by the surface model for a cluster algebra, the surface model for  a gentle algebra in the form of the Jacobian algebra of a triangulation is studied in \cite{ABCJP10,L09} (also in \cite{BZ11,CCS06}).
In \cite{DS12}, the authors deform Jacobian algebras of triangulations and introduce another class of gentle algebras so-called surface cut algebras.

More recently, for any gentle algebra, the geometric models for its module category and its derived category are established by using partial triangulations (or dissections) of the surfaces in \cite{BC21} and  \cite{OPS18} respectively. For a skew-gentle algebra, a generalization of a gentle algebra, the surface models for the associated module/derived categories are also established, for example, in \cite{A21,AB22,HZZ23,LSV22,QZZ22}.

Gentle algebras are classical objects in the representation theory of associative algebras \cite{AH81,AS87}.
Remarkably, they connect to many other areas of mathematics. For example, they play an important role in the homological mirror symmetry of surfaces, see e.g. \cite{B16,BD18,HKK17}. In particular, it has been shown  that the perfect derived category of a homologically smooth graded gentle algebra is triangle equivalent to the partially wrapped Fukaya category of a graded oriented smooth surface with marked points on the boundary \cite{HKK17, LP20}.

In this paper, we give a geometric model of any algebraic heart in the (bounded) derived category of a gentle algebra. A heart of a triangulated category is a full abelian subcategory, which is a key part in the construction of Brigdeland's stability conditions \cite{B07}. More precisely, a stability condition is equivalent to a heart with a stability function satisfying the so-called Harder-Narasimhan property. So a stability condition can be thought of as a continuous generalization of a heart. A heart is called an algebraic heart, if it is a length category and has finitely many isomorphism classes of simple objects.
Note that the stability conditions, and hence the hearts, on the derived category of a (graded) gentle algebra have been studied in \cite{HKK17}, while the geometric model established in this paper is from the perspective of representation theory, which is more concrete in some sense.
We also mention that the study of hearts in the bounded derived categories of a subclass of gentle algebras and the corresponding stability conditions
was carried out in \cite{BPP16, BPP17}.

More precisely, we will show that an algebraic heart in the derived category of a gentle algebra is equivalent to the module category of a gentle algebra.
Keeping this in mind, we give geometric models of the algebraic hearts by finding out module categories of various gentle algebras in the surface model of the derived category of a given gentle algebra $A$.

However, note that the geometric model of the module category $\ma$ in \cite{BC21} and the geometric model of the derived category $\dba$ in \cite{OPS18} are different, see details in \cite[Introduction and Section 2]{BC21}. For our purpose, that is, to realize any algebraic heart in the derived category, we make a  modification of the geometric model given in \cite{BC21} and unify these two models on the same surface, see more explanation for the differences between the geometric model in \cite{BC21} and this paper in Remark \ref{lem:compBC21}.
%by adding additional marked points on the boundary components, and by contracting the unmarked boundary components into punctures.
%In particular, an indecomposable module may correspond to several curves in \cite{BC21}, which are not homotopic with each other, while in our model, it corresponds to a unique curve.

Let's give a more detailed explanation.
We realize both the module category and the derived category of a gentle algebra on the same surface, with respect to different `coordinates': the \emph{simple coordinate} $\zD_s^*$ and the \emph{projective coordinate} $\zD_p^*$ (see Definitions \ref{definition:addmissable dissections} and \ref{definition:addmissable dissections in prelimilary}) respectively, where both $\zD_s^*$ and $\zD_p^*$ are kinds of partial triangulations of the surface which are related by arc-twist and dual (see Figures \ref{figure:admissible dissection-pre} and \ref{figure:dual-twist}).
Then a so-called \emph{zigzag curve} (see Definition \ref{definition:zigzag arcs}) on the surface represents an indecomposable $A$-module if we view $\zD_s^*$ as the coordinate, while it represents the minimal projective resolution of this module if we view $\zD_p^*$ as the coordinate. More precisely, we have the following theorem, where the first part has been essentially given in \cite[Theorem 2]{BC21}.

\begin{theoremA}[Theorem \ref{theorem:main arcs and objects}, Theorem  \ref{theorem:object}]\label{Mtheorem:object}
Let $(\cals,\calm,\zD_s^*)$ be a marked surface with a simple coordinate, and let $\zD_p^*$ be the associated projective coordinate. Then the gentle algebras arising from $\zD_s^*$ and $\zD_p^*$ are isomorphic, which is denoted by $A$. Moreover,
\begin{enumerate}[\rm(1)]
  \item there is a map $M$ which gives rise to a bijection between \emph{zigzag arcs} $\za$ on $(\cals,\calm,\zD^*_s)$ and string $A$-modules $M_\za$, as well as a bijection between \emph{(primitive) closed zigzag curves} $\za$ on $(\cals,\calm,\zD^*_s)$ and classes of band $A$-modules $M_{(\za,\lambda,m)}$, for $\lambda\in k^*, m\in \mathbb{N}$;
  \item the projective complex $\P_{\za}$ associated to any zigzag arc $\za$, with respect to $\zD_p^*$, is the minimal projective resolution of $M_{\za}$; the projective complex $\P_{(\za,\lambda,m)}$ associated to any zigzag closed curve $\za$, with respect to $\zD_p^*$, is the minimal projective resolution of $M_{(\za,\lambda,m)}$, for any $\lambda\in k^*, m\in \mathbb{N}$.
 \end{enumerate}
  \end{theoremA}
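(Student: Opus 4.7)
The plan is to handle the two coordinate pictures separately and then glue them together via the arc-twist/dual relation between $\zD_s^*$ and $\zD_p^*$. The slogan is: a zigzag curve $\za$ is one object on the surface, but it can be read either with respect to $\zD_s^*$ (to extract the composition-series data of a string/band module) or with respect to $\zD_p^*$ (to extract a complex of indecomposable projectives). The content of the theorem is that these two readings refer to the same module and its minimal projective resolution.

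For part (1), I would first identify the dual quiver of $\zD_s^*$ with the Ext-quiver of simples over $A$ (this is essentially the definition of ``simple coordinate''), together with the gentle relations coming from the forbidden local configurations at marked points of $\cals$. A zigzag arc $\za$ is then, by its defining combinatorics, a sequence of crossings with arcs of $\zD_s^*$ that records a walk in this dual quiver; the zigzag property on the surface is designed to coincide with the avoidance-of-relations condition in the definition of a string. This yields the walk-string $w_\za$ needed to define $M_\za$ via the classical correspondence of \cite{BR87}, and the bijection between zigzag arcs and strings is routine once compatibility of the local models is checked, after which the bijection $\za \mapsto M_\za$ is the composite of these two. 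Primitive closed zigzag curves give cyclic walks, so taking the extra parameter $(\zl,m) \in k^* \times \mathbb{N}$ to choose an indecomposable representation of $k[x,x^{-1}]$ produces the band modules $M_{(\za,\zl,m)}$, and the bijection extends by the same string/band dichotomy.

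For part (2), I would pass from $\zD_s^*$ to $\zD_p^*$ via the arc-twist/dual construction: this replaces each simple-labelled arc by the projective arc indexing the corresponding projective cover, and it rearranges the local polygons at each marked point so that a zigzag arc's sequence of crossings with $\zD_p^*$ reads off a sequence of indecomposable projective summands with connecting morphisms. By the machinery of \cite{OPS18}, this is precisely the data of a bounded complex $\P_\za$ of projectives. To show $\P_\za$ is a resolution of $M_\za$, I would verify term-by-term that the projective at each crossing corresponds to the projective cover of the simple factor of $M_\za$ read off at the corresponding crossing of $\za$ with $\zD_s^*$, and that successive differentials come from the paths in $A$ induced by the gentle multiplication on the polygons between consecutive crossings. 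Exactness off the top then follows from the fact that consecutive terms match up as the top/radical layers dictated by the zigzag pattern, and minimality is automatic because the zigzag condition precisely forbids the local configurations that would produce a splittable summand in a differential. The band case is handled by the same local analysis on the annulus pieces around a closed zigzag curve, with the $(\zl,m)$-parameter entering through a standard twist of the cyclic complex.

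The principal obstacle is establishing the local dictionary between the simple and projective coordinates: one has to show that the arc-twist/dual passage from $\zD_s^*$ to $\zD_p^*$ realises, at the level of each local polygon of the surface, the classical algebraic passage from a simple module $S$ to the radical filtration of its projective cover $P(S)$. Once this local picture is pinned down (together with the matching of morphism data between adjacent polygons), the two readings of a zigzag curve assemble into the same object on the surface, and parts (1) and (2) follow simultaneously by patching along the arc. I expect the verification of minimality and exactness at the band-case boundary (wrapping a closed zigzag curve, incorporating the twist parameter $\zl$, and iterating to length $m$) to require some care, but no new ideas beyond the local dictionary.
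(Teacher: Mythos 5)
Your outline for part (1) follows the paper's proof of Theorem~\ref{theorem:main arcs and objects} closely: identify arcs of $\zD_s^*$ with vertices, the minimal (anti-clockwise) angles with arrows, observe that the zigzag property is the relation-avoidance condition of a string, and check injectivity and surjectivity of $\za \mapsto M_\za$ by following the sequence of polygons; bands arise as cyclic zigzags with the extra $(\lambda,m)$ parameter.

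For part (2) there is a genuine gap. You propose to verify directly that $\P_\za$ is a projective resolution of $M_\za$ by pairing each crossing of $\za$ with $\zD_p^*$ with a crossing of $\za$ with $\zD_s^*$ and hence with a simple composition factor of $M_\za$. No such pairing exists: an $\bpoint$-arc $\za$ that crosses $\ell_0^*,\dots,\ell_n^*$ in $\zD_s^*$ has strictly more intersections with $\zD_p^*$, because near each endpoint of $\za$ there are additional crossings with $\zD_p^*$ that have no counterpart in $\zD_s^*$. Those ``tail'' crossings are exactly where the higher syzygy terms $P_{v_{-1}},P_{v_{-2}},\ldots$ of the resolution come from --- already for a simple module: an arc $\ell\in\zD_s$ crosses a single arc of $\zD_s^*$ but many arcs of $\zD_p^*$, which is what allows $M_\ell$ to have positive (even infinite) projective dimension. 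Your plan never produces these terms, and the sentence that exactness ``follows from the fact that consecutive terms match up as the top/radical layers'' is not an exactness argument; no kernels or images are computed. The paper instead (i) proves the claim first for $\ell\in\zD_s$ by explicitly building the minimal projective resolution of a simple module over the gentle (monomial) algebra and matching it to the homotopy string read off from $\zD_p^*$ (Prop.~\ref{proposition:object-simple}); (ii) inducts on the length of $\omega(\za)$, realizing $\za$ as the smoothing of a shorter zigzag $\zb$ and some $\ell\in\zD_s$ at a shared endpoint, using the resulting short exact sequence in $\ma$ and the mapping-cone theorem of \cite[Thm.~4.1]{OPS18} to identify $\P_\za$ with the cone of the corresponding morphism between $\P_\zb$ and $\P_\ell$ (Prop.~\ref{proposition:object-any arc}); and (iii) handles bands via a canonical short exact sequence for $M_{(\za,\lambda,3)}$ together with the fact that all $M_{(\za,\lambda,m)}$ share one homotopy band (Prop.~\ref{proposition:object-closed curve}). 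Without the mapping-cone input or some comparable inductive mechanism, your approach has no lever for exactness. A minor further point: minimality of $\P_\za$ is not a consequence of the zigzag condition (which lives on the $\zD_s^*$ side and governs whether $\omega(\za)$ is a string); it is automatic because every differential entry of the $\zD_p^*$-complex is a composition of arrows of $Q(\zD_p^*)$, hence a radical map.
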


As one might have expected, the \emph{weighted-oriented intersections} of curves (see Definitions \ref{definition:weighted intersections1}, \ref{definition:extension in interior}, and \ref{definition:weighted intersections}) will be explained as homomorphisms/extensions of the associated modules, see Theorem \ref{theorem:main-extensions}.

%In fact, we introduce four kinds of `dissections' and `coordinates' in Definitions \ref{definition:addmissable dissections in prelimilary} and \ref{definition:addmissable dissections}: the simple dissection $\zD_s$, the simple coordinate $\zD^*_s$, the projective dissection $\zD_p$ and the projective coordinate $\zD^*_p$.
%In some sense, each of them is a kind of partial triangulation or admissible dissection used in \cite{BC21,OPS18,PPP19}.
%They are related by two operations: a duality $*$, which is an involution, and a twist $t$ (as well as the inverse $t^{-1}$), which clockwise rotates both endpoints of an arc to next endpoints on the same boundary components, see Lemma \ref{lem:comm-diag-four-opp} and Figure \ref{figure:dual-twist}.

Since on the surface model established above, a curve does not only represent an indecomposable module, as well as the minimal projective resolution of this module, thus it is convenient to study the homological properties of a module by using the surface model. For example, we get an explicit characterization for the projective resolution of any indecomposable module, see Proposition \ref{prop:proj-res} and Remark \ref{remark:inj}.
%In particular, for any indecomposable module, there are at most two terms in the $n$-th position of its minimal projective resolution when $n\leq -2$.
The projective dimension and the injective dimension of an indecomposable $A$-module, as well as the finitistic dimension of $A$ can be read directly from the surface, see
Corollaries \ref{corollary:proj-res-string} and \ref{corollary:fdim}.

Note that the syzygies and the extensions of modules over gentle algebras (and over more general string algebras) have been studied in several papers, for example \cite{BDMTY20,BS21,CS17,CPS19,CPS21,HS05,K15,S99,Z14}.
Here we focus on the higher-extensions, and explain them as polygons of the surface, see the following theorem, which can be viewed as a generalization of the phenomenon that certain triangles on the surface represent distinguished triangles as well as $1$-extension of objects in the associated category.
In particular, any higher-extension always arises from an oriented intersection at the endpoint, while an interior non-punctured oriented intersection always gives rise to a 1-extension. 
We mention that this phenomenon is also discovered in \cite{BS21}, where the authors study the higher-extensions for a gentle algebra arising from a triangulation of a marked surface.

\begin{theoremA}[Theorem \ref{theorem:yoneda}]
Let $\aaa$ be a boundary/puncture-oriented intersection from an $\bpoint$-arc $\za$ to an $\bpoint$-arc $\zb$ on a marked surface $(\cals,\calm,\zD^*_s)$, which gives rise to a map $\aaa$ in $\Ext^\omega(M_\za,M_\zb)$, where $\omega$ is the number of the (red) arcs in $\zD^*_s$ in between $\za$ and $\zb$, as shown in Figure \ref{figure:Mhigher-extension}.  
Then the Yoneda $\omega$-extension $[\aaa]$ is given as follows, where the arcs $\zg_i$ and the maps $\bbb_i$ are depicted in the distinguished $(\omega+2)$-gon $\bbp(\aaa)$ of Figure \ref{figure:Mhigher-extension}:

\begin{gather}\label{eq:Myoneda}
\xymatrix@C=0.7cm
{[\aaa]=0\ar[r]^{} & M_{\zb}\ar[r]^{\bbb_1} & M_{\zg_1}\ar[r]^{\bbb_2} & \cdots \ar[r]^{} & M_{\zg_{\omega}}\ar[r]^{\bbb_{\omega+1}} & M_{\za}\ar[r]& 0}.
\end{gather}

\begin{figure}
 \[\scalebox{1}{
\begin{tikzpicture}[>=stealth,scale=0.8]
\draw[red!50,thick] (-4,1)--(-4,3.5)--(-2,5);
\draw[red!50,thick] (4,1)--(4,3.5)--(2,5);
\draw[red!50,dashed,thick] (-2,0)--(-4,1);
\draw[red!50,dashed,thick] (2,0)--(4,1);
\draw[red!50,dashed,thick] (-2,5)--(2,5);
\draw[thick](-2,0)--(2,0);

\draw[thick] (-6,2.5)--(0,0)--(6,2.5);
\draw[dashed,thick,black!50] (-4,5)--(0,0)--(4,5);
\draw[thick,dashed,black!50] (0,0)--(0,6);

\draw[thick]plot [smooth,tension=1] coordinates {(-6,2.5) (-4,3) (-4,5)};
\draw[thick]plot [smooth,tension=1] coordinates {(6,2.5) (4,3) (4,5)};
\draw[thick]plot [smooth,tension=1] coordinates {(0,6) (-1.5,4) (-4,5)};
\draw[thick]plot [smooth,tension=1] coordinates {(0,6) (1.5,4) (4,5)};

\draw[red,thick,fill=red] (2,0) circle (0.08);
\draw[red,thick,fill=red] (4,1) circle (0.08);
\draw[red,thick,fill=red] (4,3.5) circle (0.08);
\draw[red,thick,fill=red] (2,5) circle (0.08);
\draw[red,thick,fill=red] (-2,0) circle (0.08);
\draw[red,thick,fill=red] (-4,1) circle (0.08);
\draw[red,thick,fill=red] (-4,3.5) circle (0.08);
\draw[red,thick,fill=red] (-2,5) circle (0.08);

\node at (-2,.6) {\tiny$\za$};
\node at (2,.6) {\tiny$\zb$};
\node[black!50] at (-1.8,3) {\tiny$\ell_{t+1}$};
\node[black!50] at (1.7,3) {\tiny$\ell_{t+\omega-1}$};
\node [red!50] at (-3.65,2.2) {\tiny$\ell^*_{t}$};
\node [red!50] at (3.5,2.2) {\tiny$\ell^*_{t+\omega}$};
\node [red!50] at (2.84,4.9) {\tiny$\ell^*_{t+\omega-1}$};
\node at (-5,3) {\tiny$\zg_{\omega}$};
\node at (5,3) {\tiny$\zg_1$};

\draw[thick,fill=white] (0,0) circle (0.08);
\draw[thick,bend left,->](-.5,.2)to(.5,.2);
\node [] at (.2,.5) {\tiny$\aaa$};
\node at (0,-.5) {\tiny$q$};

\draw[thick,fill=white] (4,5) circle (0.08);
\node [] at (4.3,5.3) {\tiny$p_2$};
\draw[thick,fill=white] (-4,5) circle (0.08);
\node [] at (-4.3,5.3) {\tiny$p_{\omega}$};

\draw[thick,fill=white] (0,6) circle (0.08);
\draw[thick,dashed,bend left,->](-.5,.2)to(.5,.2);

\draw[thick,fill=white] (-6,2.5) circle (0.08);
\draw[thick,bend left,->](-5,2.5)to(-5,2.11);
\node [] at (-4.4,2.3) {\tiny$\bbb_{\omega+1}$};
\node [] at (-6.6,2.6) {\tiny$p_{\omega+1}$};

\draw[thick,fill=white] (6,2.5) circle (0.08);
\draw[thick,bend left,->](5,2.11)to(5,2.5);
\node [] at (4.65,2.3) {\tiny$\bbb_1$};
\node [] at (6.4,2.6) {\tiny$p_1$};
\end{tikzpicture}}\]
\begin{center}
\caption{A distinguished $(\omega+2)$-gon $\bbp(\aaa)$ (with the bold black arcs as the edges) represents a Yoneda $\omega$-extension $[\aaa]$ given in \eqref{eq:Myoneda}, which corresponds to a map in $\Ext^\omega(M_\za,M_\zb)$ arising from an oriented intersection $\aaa$ from $\za$ to $\zb$ with weight $\omega$.}\label{figure:Mhigher-extension}
\end{center}
\end{figure}
\end{theoremA}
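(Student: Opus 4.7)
The plan is to triangulate the distinguished $(\omega+2)$-gon $\bbp(\aaa)$ in Figure \ref{figure:Mhigher-extension} by the $\omega-1$ diagonals emanating from $q$, producing $\omega$ triangles whose associated $1$-extensions splice together into the $\omega$-extension \eqref{eq:Myoneda}.

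First I would identify each arc appearing in this triangulation with a string module via Theorem \ref{Mtheorem:object}(1). The polygon edges already give $M_\za, M_\zb, M_{\zg_1},\ldots,M_{\zg_\omega}$; for each diagonal $d_i$ from $q$ to $p_i$ ($i=2,\ldots,\omega$) the local zigzag pattern of $\bbp(\aaa)$ forces $d_i$ to be itself a zigzag arc, so that $M_{d_i}$ is a string module. Writing $d_1=\zb$ and $d_{\omega+1}=\za$, the $i$-th triangle has edges $d_i,\zg_i,d_{i+1}$, and at its two non-$q$ vertices there are interior non-punctured oriented intersections of weight $0$ between consecutive edges. By the principle recalled in the introduction --- that an interior non-punctured oriented intersection gives rise to a $1$-extension --- together with Theorem \ref{theorem:main-extensions}, each triangle produces a short exact sequence
\begin{equation*}
0\to M_{d_i}\to M_{\zg_i}\to M_{d_{i+1}}\to 0.
\end{equation*}

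Second I would splice these $\omega$ short exact sequences by the standard Yoneda construction. The spliced complex is precisely \eqref{eq:Myoneda}, with connecting map $\bbb_i$ realized as the composition $M_{\zg_{i-1}}\onto M_{d_i}\monoto M_{\zg_i}$ read off the triangulation (and with the evident conventions at the two ends where $d_1=\zb$ and $d_{\omega+1}=\za$). Exactness at each term follows from the exactness of the triangle sequences and the compatibility of successive kernels and cokernels.

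The main obstacle will be to verify that the Yoneda class of this spliced $\omega$-extension coincides with $[\aaa]\in\Ext^\omega(M_\za,M_\zb)$. For this I would pass to minimal projective resolutions via the projective coordinate $\zD_p^*$ granted by Theorem \ref{Mtheorem:object}(2): the class $[\aaa]$ is represented by the chain map $\P_\za\to\P_\zb[\omega]$ directly induced by the weight-$\omega$ oriented intersection at $q$, whereas the Yoneda product of the $\omega$ triangle classes yields, after splicing at the projective level, another chain map $\P_\za\to\P_\zb[\omega]$. Both maps are determined by the combinatorics local to $q$, and identifying them amounts to matching the single weight-$\omega$ turn at $q$ with the cumulative effect of the $\omega$ elementary turns at $p_1,\ldots,p_{\omega+1}$ after iterated chain-level splicing. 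I expect this local chain-map comparison, which must be checked uniformly in $\omega$ and in the global shape of $\bbp(\aaa)$, to be the most delicate step of the argument.
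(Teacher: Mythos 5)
Your triangulation-and-splice strategy is essentially the paper's: the paper proves the case $\omega=2$ by factoring $[\aaa]$ as a Yoneda product of two $1$-extensions through the diagonal $\ell=\ell_{t+1}$ of $\zD_s$ and notes that the general case follows by induction, which is exactly what your $\omega$-step triangulation unrolls. Your identification of the diagonals $d_i$ and the short exact sequences $0 \to M_{d_i} \to M_{\zg_i} \to M_{d_{i+1}} \to 0$ is correct. However, the derivation you give for these sequences is garbled. The vertices $p_i$ are marked points on the boundary (or punctures), \emph{not} interior non-punctured points, so the principle "an interior non-punctured oriented intersection gives rise to a $1$-extension" does not apply at them. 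The weight-$0$ intersections at the $p_i$'s give the \emph{morphisms} $M_{d_i}\to M_{\zg_i}$ and $M_{\zg_i}\to M_{d_{i+1}}$; the extension class itself corresponds to the weight-$1$ \emph{boundary} intersection at $q$ from $d_{i+1}$ to $d_i$ (via Theorem \ref{theorem:main-extensions}), not to anything happening at the $p_i$'s.

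More seriously, the step you flag as "the most delicate" — identifying the spliced class with $[\aaa]$ — is precisely the gap, and the paper supplies a concrete ingredient that your sketch is missing. The paper invokes the compatibility of composition of oriented intersections with composition of the associated morphisms (\cite[Remark 2.6]{CS23}), applied at the single point $q$: the weight-$\omega$ intersection $\aaa$ is the composition at $q$ of $\omega$ weight-$1$ intersections through the diagonals $d_i$, so the Yoneda product of the corresponding $1$-extensions is represented at the projective-resolution level by the composed chain map, which equals $\psi_\aaa$. (The same compatibility, applied at $p_i$, shows that the spliced middle map agrees with $\bbb_i$.) Your description — "matching the single weight-$\omega$ turn at $q$ with the cumulative effect of the $\omega$ elementary turns at $p_1,\dots,p_{\omega+1}$" — misplaces where the relevant composition happens (it is at $q$, not at the $p_i$'s), and the proposed direct chain-level comparison would in effect have to reprove that remark. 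As written, the proposal identifies the right extension but does not justify that its Yoneda class is $[\aaa]$.
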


Furthermore, the Yoneda product of two Yoneda extensions $[\aaa]$ and $[\bbb]$ can be explained as gluing $\bbp(\aaa)$ and $\bbp(\bbb)$ to a new polygon $\bbp(\aaa\bbb)$, see Remark \ref{remark:yoneda}.

At last, to give a geometric model for any algebraic heart in the derived category of a gentle algebra $A$, we fix a marked surface $(\cals,\calm,\zD^*_A)$ with an initial projective coordinate, and we introduce a notion of \emph{simple-minded dissection} on the surface, which is a pair $(\zD_s,f)$ of a simple dissection $\zD_s$ together with a set $f$ consisting of gradings of arcs in $\zD_s$ (see Definition \ref{definition:silting dissection}). We show that a simple-minded dissection $(\zD_s,f)$ gives rise to a geometric realization of a simple-minded collection $\P_{(\zD_s,f)}$.

Given a simple-minded dissection $(\zD_s,f)$, we construct a \emph{graded simple coordinate} $(\zD^*_s,F^*)$, where $F^*$ is a grading on $\zD^*_s$ induced by $f$. The graded simple coordinate $(\zD^*_s,F^*)$ will be used as the `coordinate' to realize the heart generated by the simple-minded collection $\P_{(\zD_s,f)}$.
More precisely, let $\zG$ be the subalgebra of the graded gentle algebra associated to $(\zD^*_s,F^*)$ consisting of elements with zero gradings, then $\zG$ is a gentle algebra and we have the following

\begin{theoremA}[Theorem \ref{theorem: last}]\label{Mtheorem: last}
Let $\calh$ be an algebraic heart of $\dba$ generated by a simple-minded collection $\P_{(\zD_s,f)}$. Then $\calh$ is equivalent to the module category of the gentle algebra $\zG$. Furthermore, any indecomposable object in $\calh$ is of the form $\P_{(\za,g)}$ for a {\em zigzag $\bpoint$-arc} $\za$ on $(\cals,\calm,\zD^*_s,F^*)$ and some grading $g$, or of the form $\P_{(\za,g,\lambda,m)}$ for a {\em zigzag closed curve} $\za$ on $(\cals,\calm,\zD^*_s,F^*)$ and some grading $g$, $\lambda\in k^*$, $m\in \mathbb{N}$.
\end{theoremA}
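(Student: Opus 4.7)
The plan is to proceed in two stages. First, I would invoke the general structure theorem (a Koenig--Yang / Al-Nofayee type statement) that a length heart in a $k$-linear Hom-finite triangulated category is determined by its simple objects, which form a simple-minded collection, and is equivalent to $\mo B$ where $B = \End_{\dba}\bigl(\bigoplus_{S \in \P_{(\zD_s,f)}} S\bigr)$, with the objects of $\P_{(\zD_s,f)}$ playing the role of simple $B$-modules. The first assertion of the theorem is therefore reduced to identifying $B$ with the gentle algebra $\zG$.

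Second, to carry out the identification $B \cong \zG$, I would exploit the weighted-oriented-intersection calculus already set up for the surface model. On $(\cals,\calm,\zD^*_s,F^*)$, each object in the simple-minded collection corresponds to an arc of $\zD_s$ equipped with a grading from $f$, and morphisms in $\dba$ between two such objects are controlled, via Theorem \ref{Mtheorem:object} and the Hom/Ext-interpretation of intersections (Definition \ref{definition:weighted intersections1} and Theorem \ref{theorem:yoneda}), by oriented intersections at common endpoints together with their weights. Arrows of $\zG$ are by construction the weight-zero oriented intersections of $\zD^*_s$ at its endpoints, and the gentle relations of $\zG$ arise in the standard way from forbidden concatenations around internal marked points of the dissection. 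This yields an isomorphism of $k$-algebras $B \cong \zG$ and hence $\calh \simeq \mo \zG$.

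Third, for the description of indecomposables, I would transport the classification across the equivalence $\calh \simeq \mo \zG$ and then apply Theorem \ref{Mtheorem:object} to the gentle algebra $\zG$, whose simple coordinate is $\zD^*_s$ on $(\cals,\calm)$. This gives, on the $\zG$ side, the familiar dichotomy of string modules $M_\za$ for zigzag $\bpoint$-arcs $\za$ and band modules $M_{(\za,\lambda,m)}$ for primitive closed zigzag curves, with $\lambda \in k^*$ and $m \in \mathbb{N}$. Pulling back through the heart equivalence, each such $\zG$-module is realised in $\dba$ as an iterated extension of shifted objects of $\P_{(\zD_s,f)}$; the grading $g$ in the expressions $\P_{(\za,g)}$ and $\P_{(\za,g,\lambda,m)}$ records, along the string/band walk of $\za$ on $\zD^*_s$, the cohomological degree of each simple summand as dictated by $F^*$, so that the underlying zigzag curve with its grading exactly encodes the image object in $\dba$.

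The main obstacle I expect is the identification $B \cong \zG$, and more precisely the verification that no extra generators or relations appear in $B$ beyond those visible in the zero-grading subalgebra $\zG$. The subtle point is ruling out hidden contributions from higher-degree oriented intersections in $\End_{\dba}$ of the simple-minded collection: one must argue, using Theorem \ref{theorem:yoneda} together with the defining property of a simple-minded collection (vanishing of negative Ext's and semisimplicity in degree zero), that every oriented intersection between two simple-minded objects has the weight forced by $F^*$, and that only the weight-zero intersections survive in degree zero. Once this compatibility between the grading $F^*$ and the homological grading on $\dba$ is established, the remainder of the argument is a combinatorial matching of string/band data on the surface with indecomposable objects of $\calh$, and is essentially bookkeeping built on top of Theorem \ref{Mtheorem:object}.
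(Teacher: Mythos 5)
Your overall plan tracks the paper's own argument in outline, but there is a real conceptual error in the first step. You set $B = \End_{\dba}\bigl(\bigoplus_{S\in\P_{(\zD_s,f)}} S\bigr)$, the endomorphism algebra of the direct sum of the simple-minded objects. By the axioms of a simple-minded collection, $\Hom(S_i,S_j)=0$ for $i\neq j$ and $\End(S_i)$ is a division algebra, so over the algebraically closed ground field $\End_{\dba}\bigl(\bigoplus_S S\bigr)\cong k^{\,n}$ is semisimple; its module category is semisimple, which $\calh$ typically is not, so $\calh\simeq\mo B$ fails for this $B$. The correct statement, as in \cite[Lemma 5.3]{KY14}, is that $\calh\simeq\mo B$ where $B$ is the endomorphism algebra of the silting object corresponding to $\P_{(\zD_s,f)}$ under the Koenig--Yang bijection, and this $B$ is gentle by \cite[Proposition 3.7]{APS23}. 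One then \emph{reads off}, rather than defines, its quiver: vertices are the simples $S_i$, and arrows correspond to a basis of $\Hom_{\dba}(S,S[1])=\Ext^1_\calh(S,S)$, i.e.\ to $F$-grading-one oriented intersections between arcs of $(\zD_s,f)$; via $F^*(\aaa^*)=1-F(\aaa)$ these are precisely the $F^*$-degree-zero arrows of $\zD^*_s$ generating $\zG$. Without replacing your $B$ by the silting endomorphism algebra and recasting the arrow data in terms of $\Ext^1$ rather than $\Hom$, the argument does not start.

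A smaller issue is in your third stage: you propose to apply Theorem~\ref{Mtheorem:object} directly to $\zG$ ``whose simple coordinate is $\zD^*_s$.'' But $\zD^*_s$ is the simple coordinate of $\tilde{\zG}=A(\zD^*_s)$, not of its degree-zero subalgebra $\zG$, which is obtained by deleting the arrows of nonzero $F^*$-grading; $\zG$'s own surface model is in general a cut of $(\cals,\calm,\zD^*_s)$. The paper avoids this by defining zigzag curves on the graded surface $(\cals,\calm,\zD^*_s,F^*)$ (Definition~\ref{definition:graded2}), whose grading-compatibility condition $F^*(\aaa^*)=0$ along the curve is exactly what isolates the $\zG$-strings and $\zG$-bands inside the $\tilde{\zG}$-picture, and then observes that iterated smoothing of the arcs in $\zD_s$ produces precisely these curves. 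The intent of your bookkeeping is the same, but the constraint imposed by $F^*$ has to be built into the curve combinatorics rather than suppressed by treating $\zD^*_s$ as if it were a simple coordinate for $\zG$.
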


Furthermore, the (simple) HRS-tilting of a torsion pair in a heart $\calh$ can be interpreted as certain flip of arcs in the associated graded simple coordinate, see Remark \ref{remark:last}.

The paper is organized as follows. We begin by recalling some background on marked surfaces, module/derived categories of gentle algebras, and algebraic hearts in derived categories in Section \ref{Preliminaries}. Then we give a geometric model for the module category of a gentle algebra in Section \ref{section:geo-model-module}. 
%In particular, we embed the geometric model of the module category into the geometric model of the associated derived category in subsections \ref{subsection: Modules as curves}, \ref{section:dual of simples and projectives} and \ref{subsection:Module category vs derived category}, and we give an explicit description of the minimal projective resolution of an indecomposable module in subsection \ref{subsection:projective-res}, and then a geometric characterization of the morphisms/extensions and the higher Yoneda extensions will be given in subsections \ref{subsection:morph} and \ref{section:yonada} respectively.
Section \ref{section:heart} is devoted to giving a geometric model for any algebraic heart in the derived category of a gentle algebra.
Finally, we give an example in Section \ref{section:example} to explain the above results.

\section*{Acknowledgments}
I would like to express my thanks to Ping He, Haibo Jin, Yu Qiu, Sibylle Schroll, Yu Zhou and Bin Zhu for interesting discussions and suggestions on this topic. I also would like to thank Raquel Coelho Sim\~oes for telling me the related work by Thomas Br\"{u}stle, David Pauksztello and Helene R. Tyler, and to thank Thomas Br\"{u}stle, David Pauksztello and Helene R. Tyler for sharing their ongoing preprint \cite{BPT}. Many thanks to the referee for the very careful reading of the paper
and for their valuable remarks. 
In particular, I would like to thank a referee for pointing out a gap in the proof of Theorem \ref{theorem: last} in the previous version, and for providing a
way to correct this.

\section{Preliminaries}\label{Preliminaries}

In this paper, an algebra will be assumed to be basic with finite dimension over a base field $k$ which is algebraically closed. A quiver will be denoted by $Q=(Q_0,Q_1)$, where $Q_0$ is the set of vertices and $Q_1$ is the set of arrows. For an arrow $\aaa$, $s(\aaa)$ is the source and $t(\aaa)$ is the target of it.
Arrows in a quiver are composed from left to right as follows: for arrows $\aaa$ and $\bbb$ we write $\aaa\bbb$ for the path from the source of $\aaa$ to the target of $\bbb$.
We adopt the convention that maps are also composed  from left to right, that is if $f: X \to Y$ and $g: Y \to Z$ then $fg : X \to Z$. In general, we consider right modules.
We denote by $\mathbb{Z}$ the set of integer numbers, and by $\mathbb{N}$ the set of positive integer numbers.
For a finite set $M$, we denote by $|M|$ its cardinality.

\subsection{Module categories of gentle algebras}\label{subsection: gentle algebras}

In this subsection, we recall some basic definitions and properties of a gentle algebra and its module category.

\begin{definition}\label{definition:gentle algebras}
We call an algebra $A=kQ/I$ a \emph{gentle algebra}, if $Q=(Q_0,Q_1)$ is a finite quiver and $I$ is an admissible ideal of $kQ$ satisfying the following conditions:
\begin{enumerate}[\rm(1)]
 \item Each vertex in $Q_0$ is the source of at most two arrows and the target of at most two arrows.

 \item For each arrow $\aaa$ in $Q_1$, there is at most one arrow $\bbb$ such that  ${\bf{0}} \neq \aaa\bbb\in I$; at most one arrow $\ccc$ such that  ${\bf{0}} \neq \ccc\aaa\in I$; at most one arrow $\bbb'$ such that $\aaa\bbb'\notin I$; at most one arrow $\ccc'$ such that $\ccc'\aaa\notin I$.

 \item $I$ is generated by paths of length two.
\end{enumerate}
\end{definition}

Throughout this section, $A=kQ/I$ denotes a gentle algebra. It is known that any finite dimensional indecomposable right A-module is either a string module or a band module, which is parameterized by string and band combinatorics respectively.
We briefly recall these results here and we refer the reader to \cite{BR87} for more details.

For an arrow $\aaa$, let $\aaa^{-1}$ be its \emph{formal inverse} with $s(\aaa^{-1})=t(\aaa)$ and $t(\aaa^{-1})=s(\aaa)$. A \emph{walk} is a (possibly infinite) sequence $\cdots\omega_1\omega_2\cdots\omega_m\cdots$ of arrows and inverse arrows in $Q$ such that $t(\omega_i)=s(\omega_{i+1})$ and $\omega_{i+1}\neq\omega^{-1}_i$ for each $i$.
A walk $\omega$ is called a \emph{finite walk}, if it consists of finite number of arrows and formal inverses.

A \emph{string} is a finite walk $\omega$ that avoids relations, that is, there is no subsequence of $\omega$ or of $\omega^{-1}$ belongs to $I$.
A \emph{direct string} is a string consisting of arrows, and an \emph{inverse string} is a string consisting of formal inverses.
For each vertex $v$, we associate a \emph{trivial string} $1_v$ to it.
Each string $\omega$ defines a \emph{string module} $M(\omega)$, which is given by the quiver representation of type $A$ obtained by replacing every vertex in $\omega$ by a copy of $k$ and every arrow by the identity map.
This gives a bijection between the inversion equivalent classes of strings and the isomorphism classes of string modules. In particular, $M(1_v)$ is the simple module arising from the vertex $v$ of $Q$.

A \emph{band} is a string $\omega$ such that any power $\omega^n$ is a string and $\omega$ itself is not a proper power of any string.
Each band $\omega=\omega_1\omega_2\cdots\omega_n$ defines a family of \emph{band modules} $M(\omega,m,\lambda)$ for a positive integer $m\in \mathbb{N}$ and a parameter $\lambda\in k^*$.
More precisely, each vertex of $\omega$ is replaced by a copy of the vector space $k^m$ and the action of an arrow $\omega_i$ (or $\omega^{-1}_i$) is induced by identities if $1 \leq i \leq n-1$ and by $\phi(\lambda)$ if $i=n$, where $\phi(\lambda)$ is an irreducible automorphism of $k^m$ with eigenvalue $\lambda$. Note that since we assume that the base field $k$ is algebraically closed, after choosing a basis of the vector spaces, an irreducible automorphism $\phi(\lambda)$ is given by a Jordan-block $B(\lambda)_m$ with rank $m$ and eigenvalue $\lambda$.
Furthermore, up to inversion and rotation, the bands together with $m\in \mathbb{N}$ and $\lambda\in k^*$ form an indexing set of band modules.
Note that for a given band $\omega$ and a parameter $\lambda$, all the band modules $M(\omega,m,\lambda)$ form a homogenous tube in the AR-quiver of the module category, where the module $M(\omega,1,\lambda)$ is at the mouth of tube, which will be called a \emph{quasi-simple band module}.

\subsection{Derived categories of gentle algebras}\label{subsection: Derived categories of gentle algebras}
In this subsection, we recall the classification of indecomposable objects in the derived category of a gentle algebra $A = kQ/I$, by using homotopy strings and homotopy bands introduced in \cite{B11}. We refer the reader to \cite{BM03,B11} for more details.

A \emph{homotopy string} $\sigma = \cdots\omega_1\omega_2\cdots\omega_n\cdots$ on $(Q,I)$ is a walk in $Q$ consisting of subwalks $\ldots, \sigma_1,\sigma_2, \ldots, \sigma_r, \ldots$ with  $\sigma = \cdots\sigma_1\sigma_2 \cdots \sigma_r\cdots$
and such that
\begin{enumerate}[\rm(1)]
\item each $\sigma_i$ is a direct or inverse string;
\item if both $\sigma_i, \sigma_{i+1}$ are direct (resp. inverse) strings then $\sigma_{i} \sigma_{i+1}$ (resp. $\sigma_{i+1}^{-1} \sigma_{i}^{-1}$) $\in I$;
\item if $\sigma$ is infinite from the right (resp. left), then there exists $l$ such that $\sigma_{l} \sigma_{l+1} \ldots$ (resp. $ \ldots \sigma_{l-1} \sigma_{l}$) is eventually periodic with each $\sigma_i, i\geq l$ (resp. $i\leq l$) an arrow (resp. formal inverse arrow) in~$Q$.
\end{enumerate}

For an infinite homotopy string, we will use the following notation: $$\sigma=(\sigma_{-s}\cdots\sigma_{-2}\sigma_{-1})^{\infty}\sigma_1
\sigma_2\cdots\sigma_n(\sigma_{n+1}
\sigma_{n+2}\cdots\sigma_{n+t})^{\infty},$$
where $\sigma_{-i}$ are formal inverse arrows for $1\leq i \leq s$, and $\sigma_{n+i}$ are direct arrows for $1\leq i \leq t$.

A \emph{homotopy band} is a finite homotopy string $\sigma = \sigma_1 \cdots \sigma_r$ with an equal number of direct and inverse strings $\sigma_i$ such that any power $\sigma^n$ is a homotopy string and $\sigma$ itself is not a proper power of any homotopy string.

A \emph{grading} on a homotopy string $\sigma$ is a sequence of integers $\mu=(\ldots, \mu_1, \mu_2, \ldots, \mu_r, \ldots)$ such that for each $i$,
    \[ \mu_{i+1} = \begin{cases}
                          \mu_i - 1 & \textrm{if $\sigma_{i+1}$ is a direct homotopy string;} \\
                          \mu_i + 1 & \textrm{if $\sigma_{i+1}$ is an inverse homotopy string.}
                      \end{cases}
    \]
The pair $(\sigma, \mu)$ is called a \emph{graded homotopy string}.

We similarly define a \emph{grading} on a homotopy band, and a \emph{graded homotopy band}.

A graded homotopy string $(\sigma,\mu)$ gives rises to an indecomposable object $\P_{(\sigma,\mu)}$, and a homotopy band $(\sigma,\mu)$ gives rise to a family of indecomposable objects $\P_{(\sigma,\mu,\lambda,m)}$ in $\kaa$, with $\lambda\in k^*$ and $m\in \mathbb{N}$. We briefly recall the construction in the following.

Let $(\sigma,\mu)$ be a graded homotopy string on $(Q,I)$ with $\sigma=\sigma_1\sigma_2\cdots \sigma_n$. Denote by $P_{v_i}$ the projective object associated to the vertex $v_i:=s(\sigma_i)$ for $1\leq i \leq n$ and $v_{n+1}:=t(\sigma_n)$. Then $\sigma_i$ induces a canonical non-zero map from $P_{v_{i+1}}$ to $P_{v_{i}}$ (resp. from $P_{v_{i}}$ to $P_{v_{i+1}}$), which is still denoted by $\sigma_i$ (resp. by $\sigma_i^{-1}$), if $\sigma_i$ is a direct string (resp. inverse string).
We put $P_{v_i}$ at $\mu_i$-th position of $\P_{(\sigma,\mu)}$ and put $P_{v_{i+1}}$ at $\mu_{i+1}$-th position of $\P_{(\sigma,\mu)}$, and connect them by the map $\sigma_i$ (resp. $\sigma_i^{-1}$). In this way, we obtain a complex $\P_{(\sigma,\mu)}$ by connecting all the projectives that $\sigma$ goes through.
We call $\P_{(\sigma,\mu)}$ the \emph{homotopy string object} arising from $(\sigma,\mu)$.

Let $(\sigma,\mu)$ be a graded homotopy band on $(Q,I)$, where $\sigma=\sigma_1\cdots\sigma_n$. Then similarly, $\sigma_i$ induces a canonical non-zero map between $P_{v_{i}}$ and $P_{v_{i+1}}$.
For each $1\leq i \leq n-1$, we put $P^m_{v_i}$ at $\mu_i$-th position of $\P_{(\sigma,\mu,\lambda,m)}$ and put $P^m_{v_{i+1}}$ at $\mu_{i+1}$-th position of $\P_{(\sigma,\mu,\lambda,m)}$, and connect them by the map $\sigma_iI_m$ or $\sigma_i^{-1}I_m$, depending on $\sigma_i$ is direct or inverse, where $I_m$ is the identity matrix of rank $m$. At last, we connect $P_{v_1}$ and $P_{v_n}$ by $\sigma_nB(\lambda)_m$ or $\sigma^{-1}_nB(\lambda)_m$, depending on $\sigma_n$ is direct or inverse. In this way, we obtain a complex $\P_{(\sigma,\mu,\lambda,m)}$ by connecting all the direct sums of projectives that $\sigma$ goes through.
We call $\P_{(\sigma,\mu,\lambda,m)}$ a \emph{homotopy band object} arising from $(\sigma,\mu)$.

Note that each object in $\kaa$ is either a homotopy string object or a homotopy band object. Furthermore, up to inversion, the homotopy strings  form an indexing set of homotopy string objects, and up to inversion and rotation, the homotopy bands together with $m\in \mathbb{N}$ and $\lambda\in k^*$ form an indexing set of homotopy band objects.

\subsection{Marked surfaces and surface dissections}\label{subsection: marked surfaces}

We recall some concepts about marked surfaces, and the construction of gentle algebras from dissections of the surfaces, for which there are many references such as \cite{HKK17,PPP19,LP20,BC21,PPP21}, in this paper we closely follow \cite{OPS18} and \cite{APS23}.

\begin{definition}
\label{definition:marked surface}
A \emph{marked surface} is a pair $(\cals,\calm)$, where
  \begin{enumerate}[\rm(1)]
 \item $\cals$ is an oriented surface with non-empty boundary with
  connected components $\partial \cals=\sqcup_{i=1}^{b}\partial_i \cals$;
 \item $\calm = \calm_{\bpoint} \cup \calm_{\rpoint} \cup \calp_{\bpoint}$ is a finite set of \emph{marked points} on $\cals$. The elements of~$\calm_{\bpoint}$ and~$\calm_{\rpoint}$ are on the boundary of $\cals$, which will be respectively represented by symbols~$\bpoint$ and~$\rpoint$. Each connected component $\partial_i \cals$ is required to contain at least one marked point of each colour, where the points~$\bpoint$ and~$\rpoint$ are alternating on $\partial_i \cals$. The elements in $\calp_{\bpoint}$ are in the interior of $\cals$. We refer to these points as \emph{punctures}, and we will also represent them by the symbol $\bpoint$.
  \end{enumerate}
\end{definition}

Note that in \cite{APS23}, the punctures in the surface is represented by $\rpoint$-point, while here we use $\bpoint$-point, since such choose is more convenient for our purpose, that is, we will focus on the module category of a gentle algebra, rather than its derived category.

On the surface, all curves are considered up to homotopy with respect to the boundary components and the punctures, and all intersections of curves are required to be transversal.
We fix the clockwise orientation as the orientation of the surface, that is, when we following the boundary, the interior of the surface is on the right.

\begin{definition}
\label{definition:arcs}
Let $(\cals,\calm)$ be a marked surface.
\begin{enumerate}[\rm(1)]
 \item An \emph{arc} is a non-contractible curve, with endpoints in~$\calm_{\bpoint}\cup \calp_{\bpoint}\cup \calm_{\rpoint}$.
  \item A \emph{loop} is an arc whose endpoints coincide.
 \item An \emph{$\bpoint$-arc} is a non-contractible curve, with endpoints in~$\calm_{\bpoint}\cup \calp_{\bpoint}$.
 \item An \emph{$\rpoint$-arc} is a non-contractible curve, with endpoints in $\calm_{\rpoint}$.
 \item A \emph{simple arc} is an arc without interior self-intersections.
 \item An \emph{exceptional arc} is a simple arc without boundary self-intersections, that is, has two distinct endpoints.
 \item A \emph{closed curve} is a non-contractible curve in the interior of $\cals$ whose endpoints coincide. We always assume a closed curve to be \emph{primitive}, that is, it is not a non-trivial power of a closed curve in the fundamental group of $\cals$.
 \end{enumerate}
\end{definition}

In order for some definitions and notations to be well-defined in the case of a loop, we will treat the unique endpoint of a loop as two distinct endpoints.

\begin{definition}\label{definition:addmissable dissections in prelimilary}
Let $(\cals,\calm)$ be a marked surface.
\begin{enumerate}[\rm(1)]
 \item A collection of simple arcs with endpoints in $\calm_{\bpoint}$ is called a \emph{projective dissection}, if the arcs have no interior intersections and they cut the surface into polygons each of which contains exactly one marked point from $\calm_{\rpoint}\cup \calp_{\bpoint}$.
 \item A collection of simple arcs with endpoints in $\calm_{\rpoint}\cup\calp_{\bpoint}$ is called a \emph{projective coordinate}, if the arcs have no interior intersections and they cut the surface into polygons each of which contains exactly one marked point from $\calm_{\bpoint}$.
  \end{enumerate}
\end{definition}

A projective dissection will be denoted by $\zD_p$. Then it is not hard to see that (cf. \cite{OPS18} for example) for each $\zD_p$, there is a unique (up to homotopy) projective coordinate on $(\cals,\calm)$, which we will denote by $\zD^*_p$, such that each arc $\ell^*$ in $\zD^*_p$ intersects exactly one arc $\ell$ of $\zD_p$. We call $\zD^*_p$ the \emph{dual} (\emph{projective coordinate}) of $\zD_p$, and call $\zD_p$ the \emph{dual} (\emph{projective dissection}) of $\zD^*_p$. 

We mention that a projective coordinate introduced above is called an $\rpoint$-admissible dissection in \cite{APS23}, and  corresponds to a lamination considered in \cite{OPS18}. Note that in \cite{OPS18}, empty  boundary components without marked points are allowed, which are contracted as $\rpoint$-punctures in this paper. There is only one kind of marked points used in  \cite{OPS18}, which corresponds to boundary-$\bpoint$-points in this paper.
Each endpoint of an arc in a lamination locates on a segment between two adjacent marked points, or on an empty boundary component. For a lamination, by contracting the endpoints on the same segment (or empty boundary component) into a point, which corresponds to a red marked point (or a puncture) used in this paper, we get a projective coordinate defined above. 

\begin{remark}
In Definition \ref{definition:addmissable dissections}, we will introduce the simple dissection and the simple coordinate. The terminologies we used here will be justified in Theorem \ref{theorem:main arcs and objects} and Theorem \ref{theorem:projectives from simple coordinates}, that is, the projective coordinate and the simple coordinate will be respectively viewed as kinds of `coordinates' when we study the derived category and the module category of a gentle algebra, and the arcs in the dual projective dissection and the dual simple dissection will respectively represent indecomposable projective modules and simple modules of this gentle algebra.
\end{remark}

\begin{definition}\label{definition:oriented intersection}
Let $q$ be a common endpoint of arcs $\ell_i^*, \ell_j^*$ in $\zD_p^*$, an \emph{oriented intersection} from $\ell_i^*$ to $\ell_j^*$ at $q$ is a clockwise angle locally from $\ell_i^*$ to $\ell_j^*$ based at $q$ such that the angle is in the interior of the surface. We call an oriented intersection a \emph{minimal oriented intersection} if it is not a composition of two oriented intersections of arcs from $\zD_p^*$.
\end{definition}

\begin{definition}\label{definition:gentle algebra from dissection}
We define the algebra $A(\zD_p^*)$ as the quotient of the path algebra $kQ(\zD_p^*)$ of the quiver $Q(\zD_p^*)$ by the ideal $I(\zD_p^*)$ defined as follows:
\begin{enumerate}[\rm(1)]
	\item The vertices of $Q(\zD_p^*)$ are given by the arcs in $\zD_p^*$.
	\item Each minimal oriented intersection $\aaa$ from $\ell^*_i$ to $\ell^*_j$ gives rise to an arrow from $\ell^*_i$ to $\ell^*_j$, which is still denoted by $\aaa$.
	\item The ideal $I(\zD_p^*)$ is generated by paths $\aaa\bbb:\ell^*_i\rightarrow \ell^*_j\rightarrow \ell^*_k$, where the common endpoint of $\ell^*_i$ and $\ell^*_j$, and the common endpoint of $\ell^*_j$ and $\ell^*_k$ that respectively gives rise to $\aaa$ and $\bbb$ coincide.
\end{enumerate}
\end{definition}

Then it is straightforward to check that $A(\zD_p^*)$ is a gentle algebra. In fact it is proved in \cite{BC21,OPS18,PPP19} that any gentle algebra arises from this way. So this establishes a bijection between the set of triples  $(\cals,\calm,\zD_p^*)$ of homeomorphism classes of marked surfaces together with projective coordinates and the set of isomorphism classes of gentle algebras $A(\zD_p^*)$.
\begin{remark}
	We should mention that when we define the algebras from a dissection on a surface, we take the opposite orientation to that used in \cite{OPS18} and \cite{BC21}. 
\end{remark}
By definition, $\zD_p^*$ cuts the surface into polygons each of which has a unique $\bpoint$-point from $\calm_{\bpoint}$. Let $\za$ be an $\bpoint$-arc or a closed curve on $(\cals,\calm)$, and we choose a direction of $\za$. Denote by $\za \cap \zD_p^*$ the totally ordered set of intersection points of~$\za$ with arcs in $\zD_p^*$, where the order is induced by the direction of $\za$.
A \emph{grading} of $\za$ is a function $f: \za \cap \zD_p^* \longrightarrow \mathbb{Z},$ which is defined as follows:
If~$p_1$ and~$p_2$ are in~$\za \cap \zD_p^*$ and~$p_2$ is the direct successor of~$p_1$, then~$\za$ enters a polygon $\bbp$ formed by arcs in $\zD_p^*$ via~$p_1$ and leaves it via~$p_2$. If the $\bpoint$-point~ in $\bbp$ is to the left of~$\za$, then~$f(p_2) = f(p_1)-1$; otherwise,~$f(p_2) = f(p_1)+1$. We call the pair $(\za,f)$ a \emph{graded curve} on  $(\cals,\calm,\zD_p^*)$.

It is not hard to see that the definition of the grading is independent on the choice of the direction of the arc. However, if $\za$ is a closed curve, then it may not be gradable, that is there may not exist a grading of $\za$ since in this case $f$ might not be well-defined.

\subsection{Geometric models for the derived categories of gentle algebras}\label{subsection: derived categories and derived categories}

In this subsection we recall the geometric model of the derived category of a gentle algebra established in \cite{OPS18}. In the following, let $(\cals,\calm,\zD_p^*)$ be a marked surface with a projective coordinate, and let $A(\zD_p^*)$ be the associated gentle algebra.

We first recall how to construct a graded homotopy string/band $(\sigma, \mu)$ of $A(\zD_p^*)$ from a graded $\bpoint$-arc/closed curve $(\za,f)$ on $(\cals,\calm,\zD_p^*)$.

Let $\za$ be an $\bpoint$-arc or a closed curve on a the surface. After choosing a direction of $\za$, denote by $\bbp_0, \bbp_1,\cdots, \bbp_{n+1}$ the ordered polygons of $\zD^*_p$ that successively intersect with $\za$, whenever $\za$ is a closed curve the index is considered modulo $n+1$, and in particular $\bbp_0=\bbp_{n+1}$. Denote by $\ell^*_0, \ell^*_1,\cdots, \ell^*_{n}$ the ordered arcs in $\zD^*_p$ that successively intersect with $\za$ such that $\ell^*_i$ belongs to $\bbp_{i}$ and $\bbp_{i+1}$ for each $0 \leq i \leq n$.

(1) If $\za$ is an $\bpoint$-arc with both endpoints in $\calm_{\bpoint}$.
Then for each $1\leq i \leq n$, there is a canonical non-zero path $\sigma_i$ in $A(\zD^*_p)$ from $\ell^*_{i-1}$ to $\ell^*_i$ (from $\ell^*_i$ to $\ell^*_{i-1}$ resp.), which is the composition of arrows arising from the minimal oriented intersections in $\bbp_i$, if the (unique) $\bpoint$-point in $\bbp_i$ is to the left (right resp.) of $\za$. We denote by $\overrightarrow{\sigma}_i=\sigma_i$ ($=\sigma^{-1}_i$ resp.) if $\sigma_i$ is a path from $\ell^*_{i-1}$ to $\ell^*_i$ (from $\ell^*_i$ to $\ell^*_{i-1}$ resp.).
Let $\sigma=\overrightarrow{\sigma}_1\overrightarrow{\sigma}_2\cdots\overrightarrow{\sigma}_n$, which is a finite homotopy string. We define the grading $\mu$ on $\sigma$ by setting $\mu_i=f(\ell^*_i)$. Then $(\sigma,\mu)$ is a finite homotopy string of $A(\zD^*_p)$.

(2) If $\za$ is a closed curve, then the construction of $(\sigma,\mu)$ is similar, noticing that since $\za$ is gradable, the numbers of direct and inverse strings are equal, thus $\sigma$ is a homotopy band, which is gradable.

(3) If $\za$ is an $\bpoint$-arc which has at least one endpoint $q$ from $\calp_{\bpoint}$. Then we firstly construct an infinite arc $\widetilde{\za}$ by redrawing $\za$ locally in a way such that it wraps around $q$ infinitely many times in the clockwise direction. If both endpoints of $\za$ are punctures, then we redraw $\za$ locally around both endpoints. Furthermore, we expand $f$ in a unique way to a grading $\widetilde{f}$ on $\widetilde{\za}$ such that the values of $f$ and $\widetilde{f}$ on the common intersections of $\za$ and $\wt{\za}$ with $\zD^*_p$ are the same. We associated the new graded arc $(\widetilde{\za},\widetilde{f})$ with a graded homotopy string $(\sigma,\mu)$ in a similar way that we do as above for those $\bpoint$-arcs with both endpoints in $\calm_{\bpoint}$. Then $(\sigma,\mu)$ is an infinite homotopy string of $A(\zD^*_p)$.

The above constructions give a bijection between the set of graded $\bpoint$-arcs/closed curves $(\za,f)$ on $(\cals,\calm,\zD_p^*)$ and the set of graded homotopy string/band $(\sigma, \mu)$ of $A(\zD_p^*)$.
We will denote by $\P_{(\za,f)}$/$\P_{(\za,f,\lambda,m)}$ the homotopy string/band object associated to $(\za,f)$, that is, the object arising from $(\sigma, \mu)$.

Let $n\in \mathbb{Z}$, for a graded curve $(\za,f)$, we define the \emph{shift} $f[n]$ by setting $f[n](q)=f(q)-n$ for any intersection $q\in \za\cap \zD_p^*$. Then the shift of the grading of the graded curves corresponds to the shift of the associated objects, that is, $\P_{(\za,f[n])}=\P_{(\za,f)}[n]$ and $\P_{(\za,f[n],\lambda,m)}=\P_{(\za,f,\lambda,m)}[n]$.

Furthermore, based on the results in \cite{ALP16}, it is proved in \cite[Theorem 3.3, Remark 3.8]{OPS18} that the morphisms in $\kaa$ can be interpreted as the oriented intersections of graded curves on the surface, which will be restated in the following. We start with introducing a notion of oriented intersections for general curves on the surface (compare Definition \ref{definition:oriented intersection}).

\begin{definition}\label{definition:oriented intersections}
Let $\za$ and $\zb$ be two $\bpoint$-arcs or closed curves on $(\cals,\calm)$ which intersect at a point $q$. An \emph{oriented intersection} from $\za$ to $\zb$ at $q$ is the clockwise angle from $\za$ to $\zb$ based at $q$ such that the angle is in the interior of the surface with the convention that if $q$ is a non-punctured interior point of the surface then the opposite angles are considered equivalent.
\end{definition}

Then there are three possibilities for the position of $q$: $q$ is a boundary marked point, a non-punctured interior point, or a puncture, see respectively the pictures in Figure \ref{interior intersection}. A boundary intersection gives rise to only one oriented intersection $\aaa$, see the left picture, while a non-punctured interior intersection gives rise to two oriented intersections, see the middle picture. A punctured interior intersection gives rise to infinite oriented intersections, see the right picture, where we draw the minimal oriented intersection from $\za$ to $\zb$ by $\aaa$, and draw the minimal oriented intersection from $\zb$ to $\za$ by $\bbb$. Then any oriented intersection from $\za$ to $\zb$ arising from $q$ is obtained from $\aaa$ by making a further clockwise $2m\pi$ rotation, for some $m\in \mathbb{N}$, which we will denoted by $(\aaa,m)$. In particular, we have $(\aaa,m)=\aaa$.
Similarly, the oriented intersections from $\zb$ to $\za$ arising from $q$ will be denoted by $(\bbb,m)$.

To consider the gradings over $\za$ and $\zb$, we introduce some special intersections $p_1$ and $p_2$ between them and the arcs in $\zD^*_p$.
More precisely, in the pictures of Figure \ref{interior intersection}, the green lines are segments of the arcs in $\zD_p^*$.
We denote by $p_1$ and $p_2$ the nearest intersections to $q$ in the set $\zD_p^*\cap \za$ and $\zD_p^*\cap \zb$ respectively.
In particular, to produce the intersections $p_1$ and $p_2$ when $q$ is a puncture, we replace the arcs by infinite arcs (the dotted arcs), see the right picture. Note that in all cases, the choices of $p_1$ and $p_2$ are unique, even for a non-punctured interior intersection, where the $\bpoint$-point in the polygon $\bbp$ which contains $q$ is on the right, see the middle picture.

\begin{figure}[ht]
		{
\begin{tikzpicture}[>=stealth,scale=0.3]
			\draw[thick,-] (7.95,2.5) -- (1.85,6.85);
			\draw[thick,-] (7.95,2.5) -- (1.85,-1.85);
		
			\draw (1.3,7) node {\tiny$\zb$};
            \draw [bend right,thick] (8.7,5) to (8.7,0);
%            \draw (9.25,1) node {\tiny$\rpoint$};
%			\draw (9.25,4) node {\tiny$\rpoint$};

%			\draw (3,6.5) node {\tiny$\rpoint$};
			\draw[thick,dark-green!50] (3,4.5) -- (5,6.5);

%			\draw (3,-1.5) node {\tiny$\rpoint$};
			\draw[thick,dark-green!50] (3,.5) -- (5,-1.5);

			\draw[dashed,thick,dark-green!50] (3,.5) -- (3,4.5);

			\draw[dashed,thick,dark-green!50] (5,6.5) -- (8.5,4.5);
			\draw[dashed,thick,dark-green!50] (5,-1.5) -- (8.5,.5);
			\draw (1.3,-2.2) node {\tiny$\za$};
			
			\draw (4,6.4) node {\tiny$p_2$};
			\draw (4,-1.2) node {\tiny$p_1$};
			\draw (8.6,2.5) node {\tiny$q$};
			\draw (5,2.5) node {\tiny$\bbp$};
            \draw [bend left,thick,->] (7.2,2) to (7.2,3);
			\draw (6.5,2.5) node {\tiny$\aaa$};
            \draw[thick,fill=white] (7.95,2.5) circle (0.2);
			\draw (12,0) node {};
			\end{tikzpicture}
\begin{tikzpicture}[>=stealth,scale=0.3]
			\draw[thick,-] (8,-.5) -- (0.15,6.85);
			\draw[thick,-] (8,5.5) -- (0.15,-1.85);

			\draw (-.5,7) node {\tiny$\zb$};

            \draw [bend right,thick] (8.7,5) to (8.7,0);
%            \draw (9.25,1) node {\tiny$\rpoint$};
%			\draw (9.25,4) node {\tiny$\rpoint$};

%%			\draw (3,6.5) node {\tiny$\rpoint$};
%			\draw[thick,dark-green] (7,6) -- (4,7.5);
%
%%			\draw (3,-1.5) node {\tiny$\rpoint$};
%			\draw[thick,dark-green] (7,-1) -- (4,-2.5);
			\draw[dashed,thick,dark-green!50] (0,5) -- (0,-.2);

			\draw[thick,dark-green!50] (0,5) -- (4,7.5);
			\draw[thick,dark-green!50] (0,-.2) -- (4,-2.5);

			\draw[dashed,thick,dark-green!50] (8,4) -- (4,7.5);
			\draw[dashed,thick,dark-green!50] (8,1) -- (4,-2.5);

			\draw (-.5,-2) node {\tiny$\za$};
			
			\draw (1.3,6.5) node {\tiny$p_2$};
			\draw (1.3,-1.6) node {\tiny$p_1$};
%			\draw (4.7,0) node {\tiny$p$};
			\draw (2,2.5)node {\tiny$\bbp$};

            \draw [bend left,thick,->] (4.2,2) to (4.2,3);
			\draw (3.5,2.5) node {\tiny$\aaa$};
            \draw [bend left,thick,->] (5.4,3) to (5.4,2);
			\draw (6.1,2.5) node {\tiny$\aaa$};
            \draw [bend left,thick,->] (4.2,3) to (5.4,3);
			\draw (4.7,3.7) node {\tiny$\bbb$};
            \draw [bend left,thick,->] (5.4,2) to (4.2,2);
			\draw (4.7,1.3) node {\tiny$\bbb$};

            \draw[thick,fill=white] (7.95,2.5) circle (0.2);

			\end{tikzpicture}
\begin{tikzpicture}[>=stealth,scale=0.8]

\draw[dark-green!50,thick](2,0)--(5,-1.6);
\draw[dark-green!50,thick](2,0)--(-1,-1.6);
\draw[dark-green!50,thick](2,0)--(2,-2.5);
\draw[dark-green!50,thick](2,0)--(5,1.6);
\draw[dark-green!50,thick](2,0)--(-1,1.6);
\draw[dark-green!50,thick](2,0)--(2,2.5);

\draw[thick](2,0)--(5,0);
\draw[thick](2,0)--(-1,0);
%\node [] at (0,-.6) {\tiny$\bbp$};

\node [] at (-.8,0.25) {\tiny$\za$};
\node [] at (4.8,-0.25) {\tiny$\zb$};

\node [] at (.95,.9) {\tiny$p_1$};
\node [] at (1.25,.2) {\tiny$p_2$};

            \draw[thick] (2,0) circle (.3);
            \draw [thick,->] (2.27,.1) to (2.31,0);
            \draw [thick,->] (1.73,-.1) to (1.69,0);

			\draw (2.3,.5) node {\tiny$\aaa$};
			\draw (2.3,-.5) node {\tiny$\bbb$};

\node [dark-green!50] at (-.8,1.1) {\tiny$\ell^*_1$};
\node [dark-green!50] at (1.75,2) {\tiny$\ell^*_2$};
%\node [dark-green] at (4.8,-1.15) {\tiny$\ell_i$};
\node [dark-green!50] at (-.8,-1.2) {\tiny$\ell^*_n$};

			    \draw[dotted,thick]plot [smooth,tension=1] coordinates {(0,0) (.6,0.3) (2,1.2) (3,0.8)};
			    \draw[dotted,thick]plot [smooth,tension=1] coordinates {(+4,0) (-.6+4,-0.3) (-2+4,-1.2) (.8,-.2) (1.5,.6)};
%			\draw (1.8,-.3) node {\tiny$p$};

            \draw[thick,fill=white] (2,0) circle (0.08);
			\draw (-3,0) node {};
\end{tikzpicture}}
			\caption{Oriented intersections give rise to morphisms between the associated objects in the derived category.
}\label{interior intersection}
			\end{figure}

Let $\P_{(\za,f)}$ and $\P_{(\zb,g)}$ be two objects in $\kaa$ associated with two graded curves $(\za,f)$ and $(\zb,g)$ on the surface. For convenience, in order to state the results, in the following we will denote a band object $\P_{(\za,f,\lambda,m)}$ by $\P_{(\za,f)}$ if $m=1$ and the statement does not depend on the choice of $\lambda$.
 Then the oriented intersections of $\za$ and $\zb$ give rise to morphisms in $\kaa$. More precisely,

(1) the oriented intersection $\aaa$ arising from a boundary intersection depicted in the left picture of Figure \ref{interior intersection} gives rise to a morphism:
\begin{equation}\label{equ:morph1}
\psi_\aaa: \P_{(\za,f)}\rightarrow\P_{(\zb,g)}[g(p_2)-f(p_1)];
\end{equation}

(2) the oriented intersections $\aaa$ and $\bbb$ arising from a non-punctured interior intersection depicted in the middle picture of Figure \ref{interior intersection} respectively give rise to two morphisms:
\begin{equation}\label{equ:morph2}
\psi_\aaa: \P_{(\za,f)}\rightarrow\P_{(\zb,g)}[g(p_2)-f(p_1)],
\end{equation}
\begin{equation}\label{equ:morph22}
\psi_\bbb:\P_{(\zb,g)} \rightarrow \P_{(\za,f)}[f(p_1)-g(p_2)+1];
\end{equation}

(3) the oriented intersections $(\aaa,m)$ and $(\bbb,m)$ arising from a punctured interior intersection depicted in the right picture of Figure \ref{interior intersection} respectively give rise to morphisms:
\begin{equation}\label{equ:morph3}
\psi_{(\aaa,m)}: \P_{(\za,f)}\rightarrow\P_{(\zb,g)}[g(p_2)-f(p_1)+mn],
\end{equation}
\begin{equation}\label{equ:morph33}
\psi_{(\bbb,m)}:\P_{(\zb,g)} \rightarrow \P_{(\za,f)}[f(p_1)-g(p_2)+mn],
\end{equation}
where $m\in \mathbb{N}$ and $n$ is the number of the arcs in $\zD_p^*$ connected to the puncture $\bpoint$, and we set $0\leq g(p_2)-f(p_1),f(p_1)-g(p_2)\leq n-1$ by modulo $n$ if necessary.

\begin{proposition}\label{prop:obj-in-derived-cat}\cite[Theorem 3.3]{OPS18}
The morphisms $\psi_\aaa$, $\psi_\bbb$, $\psi_{(\aaa,m)}$ and $\psi_{(\bbb,m)}$ constructed above form a basis of the $\Hom$-space $$\Hom^\bullet(\P_{(\za,f)},\P_{(\zb,g)}):=\bigoplus\limits_{n\in \mathbb{Z}}\Hom(\P_{(\za,f)},\P_{(\zb,g)}[n]),$$
unless $\za$ and $\zb$ coincide.

(1) If $\za$ and $\zb$ are the same $\bpoint$-arc, then above morphisms together with the identity map form a basis of the $\Hom$-space.

(2) If $\za$ and $\zb$ are the same closed curve, and $\P_{(\za,f)}$ and $\P_{(\zb,g)}$ have the same parameters, then $\P_{(\za,f)}\cong\P_{(\zb,g)}[f-g]$, and the above morphisms together with the identity map and a canonical map $\xi$ that appears in an Auslander-Reiten triangle
\[\tau \P_{(\za,f)}\longrightarrow E \longrightarrow \P_{(\za,f)} \s{\xi}\longrightarrow \tau \P_{(\za,f)}[1]=\P_{(\zb,g)}[f-g+1],\]
form a basis of the $\Hom$-space, where $f-g\in \mathbb{Z}$ is defined as the difference $f(p)-g(p)$ for any intersection $p$ of $\za=\zb$ with an arc in $\zD^*_p$.
\end{proposition}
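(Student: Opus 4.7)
The plan is to reduce the geometric statement to the purely combinatorial classification of morphisms between homotopy string and homotopy band objects given in \cite{ALP16}, then to exhibit an explicit dictionary between oriented intersections at a common endpoint of $\za$ and $\zb$ and the ``graph maps'' (pairs of common factor substrings with shift data) that provide a basis for $\Hom^\bullet(\P_{(\za,f)},\P_{(\zb,g)})$. The first step is to pass from the graded curves $(\za,f)$ and $(\zb,g)$ to their graded homotopy string/band data $(\sigma,\mu)$ and $(\sigma',\mu')$ via the bijection recalled in Section~\ref{subsection: derived categories and derived categories}. Under this dictionary, the polygons of $\zD_p^*$ crossed by the curves correspond to the arrows/paths that glue the projectives $P_{v_i}$ into the complex, and the intersections $p_1,p_2$ record exactly the homological positions of the relevant projective summands in $\P_{(\za,f)}$ and $\P_{(\zb,g)}$.

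Next I would treat the three geometric cases separately, in each case matching the local picture around $q$ with the local data of a graph map. For a boundary endpoint $q\in\calm_\bpoint$, the common endpoint forces $\sigma$ and $\sigma'$ to share a direct/inverse substring converging into $q$, producing a unique graph map; the homological degree assigned to this map by \cite{ALP16} is computed from the positions of the nearest projectives, and unravelling the definition of the grading $f,g$ in terms of polygons of $\zD_p^*$ yields exactly the shift $g(p_2)-f(p_1)$ in~\eqref{equ:morph1}. For a non-punctured interior intersection, the opposite-angle convention forces two oriented intersections; I would show these correspond to a single-letter graph map and its ``opposite'' (coming from the relation that makes $A(\zD_p^*)$ gentle), so that the two degrees differ by $1$, giving \eqref{equ:morph2} and \eqref{equ:morph22}. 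The punctured case is where the wrapping construction of $\wt{\za},\wt{\zb}$ around $q$ becomes essential: each additional clockwise turn $2m\pi$ corresponds to replacing the homotopy-string factor by one longer by a full period of length $n$ (the number of arcs of $\zD_p^*$ incident to $q$), which shifts the homological degree by $n$ and accounts for \eqref{equ:morph3} and \eqref{equ:morph33}; the identity $0\le g(p_2)-f(p_1)\le n-1$ is the normalisation picking out the minimal representative before wrapping.

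I would then address the exceptional cases (1) and (2). When $\za=\zb$ is the same $\bpoint$-arc, the non-trivial oriented intersections still enumerate the non-identity graph maps, but the identity endomorphism corresponds to the ``full'' graph map $\sigma=\sigma'$, which is not produced by the endpoint/interior mechanism above; hence it must be added separately. When $\za=\zb$ is the same closed curve with the same parameters $(\lambda,m)$, I would use the fact that homotopy band objects sit in homogeneous tubes and that, by the classification of morphisms between band objects in \cite{ALP16}, the endomorphism ring in each degree is generated by the identity together with a single quasi-length-raising map $\xi$ coming from the Auslander--Reiten triangle; this map is not of graph-map type and therefore must be appended. The relation $\P_{(\za,f)}\cong\P_{(\zb,g)}[f-g]$ is immediate from the definition of the shift via $f[n]$.

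The main obstacle, and where most of the effort will go, is the bookkeeping of the degree formula. On the geometric side the shift is read off from the values of $f,g$ at the nearest intersections $p_1,p_2$; on the algebraic side it is computed from the relative homological positions of the projective summands picked out by a graph map, together with a correction by $+1$ in the non-punctured interior case (reflecting that the overlap in a ``double'' graph map contributes one relation) and by $mn$ in the punctured case. Verifying that these corrections match up precisely requires a careful case analysis depending on whether the local piece of $\za$ (resp.\ $\zb$) enters or exits the polygon containing~$q$ from a direct or an inverse arrow of the homotopy string; I would organise this by fixing once and for all the convention that the $\bpoint$-point being on the left corresponds to a degree decrement, and checking that this is consistent with the clockwise-angle convention in Definition~\ref{definition:oriented intersections}. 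Once this is done, the basis property follows directly from the basis theorem of \cite{ALP16} combined with the bijection between graded curves and graded homotopy strings/bands.
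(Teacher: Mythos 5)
This proposition carries a citation tag (\cite[Theorem~3.3]{OPS18}) and the paper offers no proof of its own, so there is nothing internal to compare against; the statement is simply imported. That said, your sketch is a faithful reconstruction of how the cited result is actually established: OPS18 reduce to the combinatorial basis theorem of Arnesen--Laking--Pauksztello (your~\cite{ALP16}) and then build a dictionary between oriented intersections on the surface and the combinatorial morphisms. You correctly locate the crux in the degree bookkeeping, you correctly treat the puncture case via the infinite wrapping arc and the period of length $n$, and you correctly identify the extra basis elements in the diagonal cases (the identity as the ``full'' overlap, and $\xi$ as the band endomorphism invisible to the intersection mechanism). One point of precision worth flagging: the ALP16 basis is not made of graph maps alone but of graph maps, singleton single maps, singleton double maps, and (for bands) quasi-graph maps. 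Non-punctured interior intersections typically match a singleton double map and its companion (whence the $+1$ shift between \eqref{equ:morph2} and \eqref{equ:morph22}), and the AR-map $\xi$ in case~(2) is a quasi-graph map; folding all of these under the single label ``graph map'' would need to be unpacked to make the bijection with oriented intersections airtight, but the underlying identifications you describe are the right ones.
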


\subsection{t-structures, hearts and simple-minded collections}\label{subsection: t-structures and hearts}
For a finite dimensional algebra $A$, it is well-known that the bounded derived category $\cald^b(A)$ of finite generated right $A$-module is triangle equivalent to the homotopy category $\kaa$. In the following, we will not distinguish these two categories. It is also well-known that the category $\ma$ of finite generated right $A$-module can be embedded in $\kaa$ as a full subcategory which maps a module $M$ to its projective resolution $\P_M$. Under this embedding, the extension spaces of modules coincide with the spaces of homomorphisms of the associated projective complexes, that is, we have a canonical isomorphism $\Ext^\omega(M,N)\cong \Hom(\P_M,\P_N[\omega])$ for any modules $M$ and $N$ in $\ma$, and any non-negative integer $\omega$.

In fact, $\ma$ is a typical example of a more general concept so called the heart of $\cald^b(A)$, that is, the heart of the standard t-structure $(\cald^{\leq 0},\cald^{\geq 0})$, where $\cald^{\leq 0}$ consists of complexes with
vanishing cohomologies in positive degrees, and $\cald^{\geq 0}$ consists of complexes with vanishing cohomologies in negative degrees.

In general, a \emph{$t$-structure} on a triangulated category $\calc$ with suspension functor $[1]$ (\cite{BBD82}) is a pair $(\calc^{\leq 0},\calc^{\geq 0})$ of full subcategories of $\calc$ such that
\begin{enumerate}[\rm(1)]
\item $\calc^{\leq 0}[1]\subseteq\calc^{\leq 0}$ and
$\calc^{\geq 0}[-1]\subseteq\calc^{\geq 0}$;
\item $\Hom(M,N[-1])=0$ for $M\in\calc^{\leq 0}$
and $N\in\calc^{\geq 0}$;
\item for each $M\in\calc$ there is a triangle $M'\rightarrow
M\rightarrow M''\rightarrow M'[1]$ in $\calc$ with $M'\in\calc^{\leq
0}$ and $M''\in\calc^{\geq 0}[-1]$.
\end{enumerate}

We call $\calc^{\leq 0}~\cap~\calc^{\geq 0}$ the \emph{heart} of the t-structure, which is
always abelian.
The $t$-structure $(\calc^{\leq 0},\calc^{\geq 0})$ is said to be
\emph{bounded} if
$$\bigcup_{n\in\mathbb{Z}} \calc^{\leq
0}[n]=\calc=\bigcup_{n\in\mathbb{Z}}\calc^{\geq 0}[n].$$

A t-structure is called \emph{algebraic} if it is bounded and its heart is \emph{algebraic}, that is, the heart has finitely many isomorphism classes of simple objects and each object of the heart is both artinian and noetherian. The algebraic t-structures are closely related to collections of certain special objects in the triangulated category: the simple-minded collections.

A collection $S_1,\ldots,S_n$ of objects in a triangulated category $\calc$ is said to be
\emph{simple-minded} if the following conditions hold for
$1\leq i, j\leq n:$
\begin{enumerate}[\rm(1)]
\item $\Hom(S_i,S_j[m])=0,~~\forall~m<0$,
\item {$\End(S_i)$ is a division algebra and $\Hom(S_i,S_j)$ vanishes for $i\neq j$,}
%$\Hom(S_i,S_j)=\begin{cases} K & \text{if\ }i=j,\\
                          %                 0 & \text{otherwise},
                          %                 \end{cases}$
\item $S_1,\ldots,S_n$ generate
$\calc$ (i.e. $\calc=\thick(S_1,\ldots,S_n)$).
\end{enumerate}

Note that for any algebra $A$, the collection of simple A-modules consist a simple-minded collection of $\cald^b(A)$, and the set of simple-minded collections bijectively correspond to the set of algebraic t-structures, and thus correspond to the set of algebraic hearts, see \cite{KY14}. The following proposition shows how to construct an algebraic heart from a simple-minded collection.

\begin{proposition}\cite[Proposition 5.4, Theorem 6.1]{KY14}\label{prop:simpleminded-to-t-str}
Let $S_1,\ldots,S_n$ be a simple-minded collection of
$\dba$. Let $\cald^{\leq 0}$ (respectively, $\cald^{\geq
0}$) be  the extension closure of $\{S_i[m]\mid i=1,\ldots,n,
m\geq 0\}$ (respectively, $\{S_i[m]\mid i=1,\ldots,n, m\leq 0\}$)
in $\dba$.
Then the pair $(\cald^{\leq 0},\cald^{\geq 0})$ is a bounded $t$-structure on $\dba$.

Conversely, any algebraic heart in $\dba$ arises from a simple-minded collection in the above way.
\end{proposition}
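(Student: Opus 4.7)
The plan is to first apply the general machinery of Proposition \ref{prop:simpleminded-to-t-str}, which guarantees that the simple-minded collection $\P_{(\zD_s,f)}$ generates a bounded algebraic $t$-structure whose heart $\calh$ is a length category with simples given exactly by the members of the collection. Since $\calh$ is a length abelian category with only finitely many isomorphism classes of simples, abstract nonsense provides an equivalence $\calh \simeq \text{mod-}B$ for some basic finite dimensional algebra $B$. The task then becomes to identify $B$ with the gentle algebra $\zG$ arising from $(\zD^*_s,F^*)$, and afterwards to classify the indecomposables via zigzag curves using Theorem \ref{Mtheorem:object} applied to $\zG$.

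For the identification $B \cong \zG$, I would compute the Ext-quiver of the simple-minded collection inside $\dba$ using the geometric description of morphisms and extensions established earlier. Concretely, arrows of the quiver of $B$ are in bijection with a basis of $\bigoplus_{i,j} \Ext^1_\calh(S_i,S_j)$, which by Proposition \ref{prop:obj-in-derived-cat} together with Theorem \ref{theorem:yoneda} is realized by oriented intersections of arcs in $\zD_s$ that have weight one in the graded surface $(\cals,\calm,\zD^*_s,F^*)$. The relations are read off as vanishing degree-two Yoneda products, again via Remark \ref{remark:yoneda}. Comparing this with the explicit presentation of $\zG$ as the zero-graded subalgebra of the graded gentle algebra of $(\zD^*_s,F^*)$, whose generators and relations come from those very same oriented intersections, yields $B \cong \zG$ and hence the equivalence $\calh \simeq \text{mod-}\zG$.

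With the equivalence in hand, the classification of indecomposables in $\calh$ follows by transporting Theorem \ref{Mtheorem:object}, applied to the gentle algebra $\zG$ and its own surface model, through this equivalence. A string $\zG$-module corresponds to an iterated extension of the simples in $\P_{(\zD_s,f)}$ along a sequence of oriented intersections in $\zD^*_s$, and by the interpretation of Yoneda products as gluings of distinguished polygons (Remark \ref{remark:yoneda}), this iterated extension is realized inside $\dba$ precisely as the homotopy string object $\P_{(\za,g)}$, where $\za$ is the zigzag $\bpoint$-arc on $(\cals,\calm,\zD^*_s)$ tracing the same sequence of intersections and $g$ is the grading determined by $F^*$ together with the unique shift placing the object into $\calh$. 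The band case is handled analogously using zigzag closed curves and quasi-simple band modules of $\zG$ together with their Jordan-block thickenings indexed by $\lambda \in k^*$ and $m \in \mathbb{N}$.

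The main obstacle is the compatibility of the grading data with the passage between simple and projective coordinates. The twist $t$ relating $\zD_s$ and $\zD^*_p$ in the commutative diagram of Figure \ref{figure:dual-twist} must interact cleanly with the grading $f$ on $\zD_s$ and the induced grading $F^*$ on $\zD^*_s$, so that weight-one oriented intersections in the graded simple coordinate correspond bijectively to arrows of $\zG$ (and not merely to morphisms of some higher degree in $\dba$), and so that the grading $g$ attached to a zigzag arc is pinned down uniquely by the condition that $\P_{(\za,g)}$ lies in $\calh$ rather than in some shift $\Sigma^n \calh$. A careful local analysis at each boundary marked point and each interior endpoint is required to verify both of these compatibilities, and this is where the bulk of the technical work lies.
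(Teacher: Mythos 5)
Your proposal does not prove the statement at hand. The statement is the Koenig--Yang result (imported by the paper from \cite[Proposition 5.4, Theorem 6.1]{KY14}) that for a simple-minded collection $S_1,\ldots,S_n$ the extension closures $\cald^{\leq 0}$ and $\cald^{\geq 0}$ of the non-negatively (respectively non-positively) shifted $S_i$ form a bounded $t$-structure on $\dba$, together with the converse that every length heart arises this way. Your very first step is to ``apply the general machinery of Proposition \ref{prop:simpleminded-to-t-str}'', i.e.\ you invoke exactly the statement you were asked to prove, and the rest of your argument addresses something else entirely: the identification of the heart generated by $\P_{(\zD_s,f)}$ with the module category of the gentle algebra $\zG$ and the classification of its indecomposables by zigzag curves, which is (a sketch of) the paper's Theorem \ref{theorem: last}, not the proposition. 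As a proof of the proposition this is circular and contains none of the required content.

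What is actually needed is an argument internal to the triangulated category, independent of any surface model. One must verify the $t$-structure axioms for the pair of extension closures: the shift-stability is immediate, the vanishing $\Hom(X,\Sigma^{-1}Y)=0$ for $X\in\cald^{\leq 0}$ and $Y\in\cald^{\geq 0}$ follows by induction on filtrations from the defining condition $\Hom(S_i,\Sigma^m S_j)=0$ for $m<0$, and the existence of truncation triangles requires constructing, for an arbitrary object of $\dba$, a filtration by shifts of the $S_i$ using that the collection generates $\dba$; boundedness and the converse direction (that the simple objects of any length heart form a simple-minded collection recovering the given $t$-structure) must also be established. None of your geometric machinery (weight-one oriented intersections, distinguished polygons, graded simple coordinates, the twist $t$) bears on these points. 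The paper itself gives no proof but cites \cite{KY14}; a self-contained proof would have to reproduce that homological argument rather than the geometric reasoning you outline.
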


\section{A geometric model for the module category of a gentle algebra}\label{section:geo-model-module}
\subsection{Modules as curves}\label{subsection: Modules as curves}

In this section, let $(\cals,\calm)$ be a marked surface, where $\calm=\calm_{\bpoint}\cup \calp_{\bpoint}\cup \calm_{\rpoint}$ is the set of marked points on $\cals$. We will introduce simple dissections/coordinates on $(\cals,\calm)$ and associate them with gentle algebras. Then we interpret the indecomposable modules of the gentle algebras as certain curves on the marked surface.
This can be viewed as a modification of the geometric model of the module category established in \cite{BC21}. 

\begin{definition}\label{definition:addmissable dissections}
\begin{enumerate}[\rm(1)]
 \item A collection of simple $\bpoint$-arcs is called a \emph{simple dissection}, if the arcs have no interior intersections and they cut the surface into polygons each of which contains exactly one marked point from $\calm_{\rpoint}$.
 \item A collection of simple $\rpoint$-arcs is called a \emph{simple coordinate}, if the arcs have no interior intersections and they cut the surface into polygons each of which contains exactly one $\bpoint$-marked point from $\calm_{\bpoint}\cup \calp_{\bpoint}$.
\end{enumerate}
\end{definition}

Note that if $\calp_{\bpoint}$ is empty, that is there exists no puncture on the surface, then the projective dissection (projective coordinate resp.) coincides with the simple dissection (projective coordinates resp.), otherwise, they are different, see the following example.

\begin{example}\label{example:admissible dissection}
	The pictures in Figure \ref{figure:admissible dissection-pre} give examples of coordinates and dissections on a disk with one puncture.
		\begin{figure}
		\begin{center}
			\begin{tikzpicture}[scale=0.35]
				\draw[thick,fill=white] (0,0) circle (4cm);
				
				\draw[thick](0,0)to(3.5,-2);
				\draw[thick](-3.5,-2)to(0,0);
				\draw[thick](0,4)to(0,0);	
				
				\draw (0,-4) node {$\rpoint$};
				\draw (-3.5,2) node {$\rpoint$};
				\draw (3.5,2) node {$\rpoint$};
				
				\draw (0.6,2.6) node {\tiny$\ell_1$};
				\draw (1.4,-1.4) node {\tiny$\ell_2$};
				\draw (-1.4,-1.4) node {\tiny$\ell_3$};
				\draw[thick,fill=white] (0,0) circle (.2cm);
				\draw[thick,fill=white] (0,4)  circle (.2cm);
				\draw[thick,fill=white] (-3.5,-2) circle (.2cm);
				\draw[thick,fill=white] (3.5,-2) circle (.2cm);					
				\draw (0,-5.5) node {$\zD_s$};
				\draw (6,0) node {};
			\end{tikzpicture}
			\begin{tikzpicture}[scale=0.35]
				
				\draw[thick,fill=white] (0,0) circle (4cm);
				
				\draw[red,thick](-3.5,2)to(3.5,2);
				\draw[red,thick](-3.5,2)to(0,-4);
				\draw[red,thick](0,-4)to(3.5,2);		
				\draw (0,-4) node {$\rpoint$};
				\draw (-3.5,2) node {$\rpoint$};
				\draw (3.5,2) node {$\rpoint$};
				
				\draw (0,2.5) node {\tiny$\ell_1^*$};
				\draw (2.3,-1) node {\tiny$\ell_2^*$};
				\draw (-2.3,-1) node {\tiny$\ell_3^*$};		
				\draw (0,-5.5) node {$\zD^*_s$};				
				
				\draw[thick,fill=white] (0,0) circle (.2cm);
				\draw[thick,fill=white] (0,4)  circle (.2cm);
				\draw[thick,fill=white] (-3.5,-2) circle (.2cm);
				\draw[thick,fill=white] (3.5,-2) circle (.2cm);	
				\draw (6,0) node {};		
			\end{tikzpicture}
			\begin{tikzpicture}[scale=0.35]
				
				\draw[thick,fill=white] (0,0) circle (4cm);
				
				\draw[thick](-3.5,-2)to(3.5,-2);
				\draw[thick](-3.5,-2)to(0,4);
				\draw[thick](0,4)to(3.5,-2);		
				\draw (0,-4) node {$\rpoint$};
				\draw (-3.5,2) node {$\rpoint$};
				\draw (3.5,2) node {$\rpoint$};
				
				\draw (0,-2.6) node {\tiny$t(\ell_2^*)$};
				\draw (2.6,1.4) node {\tiny$t(\ell_1^*)$};
				\draw (-2.5,1.4) node {\tiny$t(\ell_3^*)$};
				\draw[thick,fill=white] (0,0) circle (.2cm);
				\draw[thick,fill=white] (0,4)  circle (.2cm);
				\draw[thick,fill=white] (-3.5,-2) circle (.2cm);
				\draw[thick,fill=white] (3.5,-2) circle (.2cm);	
				\draw (0,-5.5) node {$\zD_p=t(\zD^*_s)$};
				\draw (6,0) node {};	
			\end{tikzpicture}
			\begin{tikzpicture}[scale=0.35]
				
				\draw[thick,fill=white] (0,0) circle (4cm);
				
				\draw[dark-green,thick](0,0)to(3.5,2);
				\draw[dark-green,thick](-3.5,2)to(0,0);
				\draw[dark-green,thick](0,-4)to(0,0);	
				
				\draw (0,-4) node {$\rpoint$};
				\draw (-3.5,2) node {$\rpoint$};
				\draw (3.5,2) node {$\rpoint$};
				
				\draw (1,-2.6) node {\tiny$t(\ell_2)$};
				\draw (2.4,.6) node {\tiny$t(\ell_1)$};
				\draw (-2.4,.6) node {\tiny$t(\ell_3)$};
				\draw[thick,fill=white] (0,0) circle (.2cm);
				\draw[thick,fill=white] (0,4)  circle (.2cm);
				\draw[thick,fill=white] (-3.5,-2) circle (.2cm);
				\draw[thick,fill=white] (3.5,-2) circle (.2cm);				
				\draw (0,-5.5) node {$\zD^*_p=t(\zD_s)$};
				
			\end{tikzpicture}
		\end{center}
		\begin{center}
			\caption{Examples of dissections and coordinates on a disk with one puncture. From left to right: simple dissection, simple coordinate, projective dissection and projective coordinate. Note that the simple/projective dissection and the simple/projective coordinate are dual with each other, in the sense that an arc in the dissection intersects exactly one arc in the associated coordinate. The projective dissection/coordinate is obtained from a simple coordinate/dissection by clockwise rotating the $\rpoint$/$\bpoint$-boundary endpoints of the arcs to the next $\bpoint$/$\rpoint$-boundary points on the same boundary component.}\label{figure:admissible dissection-pre}
		\end{center}
	\end{figure}
\end{example}

We denote a simple dissection by $\zD_s$. Then similar to the case of projective dissection, for any $\zD_s$, there is a unique (up to homotopy) \emph{dual simple coordinate} $\zD^*_s$ on $(\cals,\calm)$, such that each arc in $\zD^*_s$ intersects exactly one arc of $\zD_s$. See the left two pictures in Figure \ref{figure:admissible dissection-pre} for an example.

One may associate $\zD^*_s$ with a gentle algebra $A(\zD^*_s)=kQ(\zD_s^*)/I(\zD_s^*)$, which is dual to the case for a projective coordinate given in Definition \ref{definition:gentle algebra from dissection}.
More precisely, the vertices of $Q(\zD^*_s)$ are given by the arcs in $\zD^*_s$, and the arrows are given by the \emph{minimal oriented intersections} between the arcs. However, dual to the case for the projective coordinate, here we need $\ell^*_j$ follows $\ell^*_i$ \emph{anti-clockwise} at a common endpoint for an oriented intersection $\aaa$ from $\ell^*_i$ to $\ell^*_j$, and the ideal $I(\zD_s^*)$ is generated by paths $\aaa\bbb:\ell^*_i\rightarrow \ell^*_j\rightarrow \ell^*_k$, where the common endpoint of $\ell^*_i$ and $\ell^*_j$, and the common endpoint of $\ell^*_j$ and $\ell^*_k$ that respectively gives rise to $\aaa$ and $\bbb$ are different.
See Example \ref{example:quiver of admissible dissection} for an example.

\begin{example}\label{example:quiver of admissible dissection}
The left picture in Figure \ref{figure:quiver of admissible dissection} is a marked surface with a simple coordinate, while the right picture is the associated quiver with relations.
	\begin{figure}[ht]
		\begin{center}
			\begin{tikzpicture}[scale=0.7]
				\draw[thick,fill=white] (0,0) circle (4cm);
				\draw[thick,fill=gray] (-.5,0) circle (.5cm);

				\draw[thick,red](0,-4)to(0,0);	
			    \draw[red,thick]plot [smooth,tension=1] coordinates {(0,-4) (-2.7,0) (-1.1,2) (0,0)};
			    \draw[red,thick]plot [smooth,tension=1] coordinates {(0,-4) (2.7,0) (1.1,2) (0,0)};

                \draw[bend right,thick,->](.6,-3.4)to(0,-3.2);
                \draw[bend right,thick,->](0,-3.2)to(-.6,-3.4);
                \draw[bend right,thick,->](.3,1)to(-.3,1);
                \draw[bend right,thick,->](0,-.6)to(.2,.6);
                \draw (.5,0) node {$\ddd$};
                \draw (.5,-2.8) node {$\aaa$};
                \draw (-.5,-2.8) node {$\bbb$};
                \draw (0,1.5) node {$\ccc$};

                \draw[red] (-1.4,-2) node {$1$};
                \draw[red] (2,-2) node {$3$};
                \draw[red] (.4,-2) node {$2$};
				
				\draw (0,-5.5) node {$(\cals,\calm,\zD^*_s)$};

				\draw[thick,fill=white] (-1,0) circle (.1cm);
                \draw[thick,fill=white] (0,4)  circle (.1cm);
                \draw[thick,fill=white] (1.5,0) circle (.08cm);
				\draw[thick,red,fill=red] (0,0) circle (.08cm);				
				\draw[thick,red,fill=red] (0,-4) circle (.08cm);	
			\end{tikzpicture}
			\begin{tikzpicture}[scale=0.7,>=stealth]
                \draw (-2.5,0) node {$3$};
                \draw (2.5,0) node {$2$};
				\draw (0,-3) node {$1$};
				
                \draw (0,-.4) node {$\aaa$};
                \draw (1.7,-1.5) node {$\bbb$};
				\draw (-1.7,-1.5) node {$\ccc$};
                \draw (0,1.5) node {$\ddd$};

                \draw [thick,->] (-2,0) -- (2,0);
                \draw [thick,->] (-2,-.5) -- (-.5,-2.5);
                \draw [thick,->] (2,-.5) -- (.5,-2.5);

                \draw[bend right,thick,->](2,.5)to(-2,.5);
				\draw[dotted,thick,bend left](1.5,0) to (1.8,0.6);					
                \draw[dotted,thick,bend right](-1.5,0) to (-1.7,0.6);	
			
					\draw (0,-6) node {$(Q(\zD^*_s),I(\zD^*_s))$};
					\draw (-6,-5) node {};
			\end{tikzpicture}
         \end{center}
        \begin{center}
			\caption{The left picture is a simple coordinate on a marked annulus, and the right picture is the associated quiver with relations, where we denote the relations by dotted lines.}\label{figure:quiver of admissible dissection}
		\end{center}
	\end{figure}
\end{example}
%	\begin{figure}[ht]
%		\begin{center}
%			{\begin{tikzpicture}[scale=0.5]
%					\draw (0,2.6) node {$\za^*$};
%					\draw (2.5,-1.4) node {$\zb^*$};
%					\draw (-2.5,-1.4) node {$\ell^*$};	
%					
%					\draw [thick,->] (0.3,2.2) -- (2.2,-1);
%					\draw [thick,->] (2,-1.4) -- (-2.2,-1.4);
%					\draw [thick,->] (-2.4,-1) -- (-0.4,2.2);
%					
%					\draw[dotted,thick](.3,1.8) to [out=-90,in=-90] (-.4,1.8);
%					\draw[dotted,thick](1.4,-1.2) to [out=90,in=180] (1.9,-0.6);
%					\draw[dotted,thick](-1.5,-1.2) to [out=90,in=0] (-2,-0.6);					
%					\draw (0,-3) node {\tiny$(Q(\zD^*_s),I(\zD^*_s))$};
%					
%			\end{tikzpicture}}
%		\end{center}
%		\begin{center}
%			\caption{The quiver with relations associated to the simple coordinate in Figure \ref{figure:admissible dissection}, where we denote the relations by dotted lines.}\label{figure:quiver of admissible dissection}
%		\end{center}
%	\end{figure}

Then $A(\zD^*_s)$ is a gentle algebra, and similar to the case for projective coordinates, this establishes a bijection between the set of triples $(\cals,\calm,\zD^*_s)$ of homeomorphism classes of marked surfaces together with simple coordinates and the set of isomorphism classes of gentle algebras $A(\zD^*_s)$, see also \cite[Theorem 1]{BC21}.

Note that $\zD^*_s$ cut the surface into polygons $\bbp$ each of which has exactly one marked point from $\calm_{\bpoint}$ or from $\calp_{\bpoint}$, see the pictures in Figure \ref{figure:two polygons}. These polygons will be called the \emph{polygons of $\zD^*_s$}.

\begin{figure}
 \[\scalebox{1}{
\begin{tikzpicture}[>=stealth,scale=0.7]
\draw[red,thick] (0,0)--(-1,2)--(2,3)--(5,2)--(4,0);
\draw[thick](4,0)--(0,0);
\draw[red,thick,fill=red] (0,0) circle (0.1);
\draw[red,thick,fill=red] (-1,2) circle (0.1);
\draw[red,thick,fill=red] (2,3) circle (0.1);
\draw[red,thick,fill=red] (5,2) circle (0.1);
\draw[red,thick,fill=red] (4,0) circle (0.1);

\draw[,thick,fill=white] (2,0) circle (0.1);
\node[red] at (.2,2.8) {\tiny$\ell^*_{i_2}$};
\node[red] at (5,.8) {\tiny$\ell^*_{i_m}$};
\node[red] at (-.8,.8) {\tiny$\ell^*_{i_1}$};
\node at (.8,1.5) {\tiny$\bbp$};
\node at (7,1.5) {};
\end{tikzpicture}

\begin{tikzpicture}[>=stealth,scale=0.7]
\draw[red,thick] (0,0)--(-1,2)--(2,3)--(5,2)--(4,0);
\draw[red,thick](4,0)--(0,0);
\draw[red,thick,fill=red] (0,0) circle (0.1);
\draw[red,thick,fill=red] (-1,2) circle (0.1);
\draw[red,thick,fill=red] (2,3) circle (0.1);
\draw[red,thick,fill=red] (5,2) circle (0.1);
\draw[red,thick,fill=red] (4,0) circle (0.1);

\draw[thick,fill=white] (2,1.5) circle (0.1);
\node[red] at (0.2,2.8) {\tiny$\ell^*_{i_2}$};
\node[red] at (5.2,.8) {\tiny$\ell^*_{i_{m-1}}$};
\node[red] at (2,.4) {\tiny$\ell^*_{i_m}$};
\node[red] at (-.8,.8) {\tiny$\ell^*_{i_1}$};
\node at (.8,1.5) {\tiny$\bbp$};
\end{tikzpicture}}\]
\begin{center}
\caption{Two types of polygon $\bbp$ formed by arcs in a simple coordinate and boundary segments.}\label{figure:two polygons}
\end{center}
\end{figure}

In the following we introduce a special kind of curves on $(\cals,\calm,\zD^*_s)$, which will be interpreted as indecomposable module of $A(\zD^*_s)$.
We denote a polygon $\bbp$ of $\zD^*_s$ by $(\ell^*_{i_1},\cdots,\ell^*_{i_m})$, the ordered set of arcs in $\zD^*_s$ which form $\bbp$, where the arcs are ordered clockwise and where the index $1\leq j \leq m$ is considered modulo $m$ if $\bbp$ has a puncture. For any $\ell^*_{i_j}\in \bbp$, we call $\ell^*_{i_{j-1}}$ (if exists) the \emph{predecessor} of $\ell^*_{i_{j}}$ in $\bbp$ and call $\ell^*_{i_{j+1}}$ (if exists) the \emph{successor} of $\ell^*_{i_{j}}$ in $\bbp$, see Figure \ref{figure:two polygons}.
In particular, $\ell^*_{i_{1}}$ has no predecessor and $\ell^*_{i_{m}}$ has no successor if $\bbp$ has a marked point from $\calm_{\bpoint}$, while $\ell^*_{i_{1}}$ has the predecessor $\ell^*_{i_{m}}$ and $\ell^*_{i_{m}}$ has the successor $\ell^*_{i_{1}}$ if $\bbp$ has a puncture.

{\bf Setting 1}: {\it Let $\za$ be an $\bpoint$-arc or a closed curve on a marked surface $(\cals,\calm)$. After choosing a direction of $\za$, denote by $\bbp_0, \bbp_1,\cdots, \bbp_{n+1}$ the ordered polygons of $\zD^*_s$ that successively intersect with $\za$, whenever $\za$ is a closed curve the index is considered modulo $n+1$, and in particular, we set $\bbp_0=\bbp_{n+1}$. Denote by $\ell^*_0, \ell^*_1,\cdots, \ell^*_{n}$ the ordered arcs in $\zD^*_s$ that successively intersect with $\za$ such that $\ell^*_i$ belongs to $\bbp_{i}$ and $\bbp_{i+1}$ for each $0 \leq i \leq n$. Note that $\{\ell^*_0, \ell^*_1,\cdots, \ell^*_{n}\}$ is an ordered multiset, that is, the $\ell^*_{i}$ need not be distinct.}

\begin{definition}\label{definition:zigzag arcs}
Let $\za$ be an $\bpoint$-arc or a closed curve on $(\cals,\calm,\zD^*_s)$. Under the notations in {\bf Setting 1} above,
\begin{enumerate}[\rm(1)]
	\item we call $\za$ a \emph{zigzag curve} on $(\cals,\calm,\zD^*_s)$ if in each polygon $\bbp_{i+1}, 0 \leq i \leq n+1$, $\ell^*_{i+1}$ is the predecessor or the successor of $\ell^*_{i}$. Furthermore, if $\bbp_{i+1}$ is a polygon which contains a puncture and has $m$ edges with $m\neq 2$, then we also need the puncture is not in the (unique) triangle formed by the segments of $\ell^*_{i}$, $\ell^*_{i+1}$ and $\za$, see the right picture of Figure \ref{figure:def-zigzag};
  \item we call an $\bpoint$-arc a \emph{zigzag arc} if it is a zigzag curve;
  \item we call a (primitive) closed curve a \emph{closed zigzag curve} if it is a zigzag curve.
\end{enumerate}
%\begin{enumerate}[\rm(1)]
%  \item for any $1 \leq i \leq n$, $\ell^*_{i+1}$ is the predecessor or the successor of $\ell^*_{i}$ in the polygon $P_{i+1}$.
%  \item the puncture is not in the triangle formed by the segments of $\za$, $\ell^*_{i}$ and $\ell^*_{i+1}$, if $P_{i+1}$ is a polygon contains a puncture which has at least three edges.
%\end{enumerate}
\begin{figure}
 \[\scalebox{1}{
\begin{tikzpicture}[>=stealth,scale=0.7]
\draw[red,thick] (0,0)--(-1,2)--(2,3)--(5,2)--(4,0);
\draw[thick](4,0)--(0,0);
\draw[red,thick,fill=red] (0,0) circle (0.1);
\draw[red,thick,fill=red] (-1,2) circle (0.1);
\draw[red,thick,fill=red] (2,3) circle (0.1);
\draw[red,thick,fill=red] (5,2) circle (0.1);
\draw[red,thick,fill=red] (4,0) circle (0.1);

\draw[thick,fill=white] (2,0) circle (0.1);
\node at (.8,.5) {\tiny$\bbp_{i+1}$};
\node at (.8,1.5) {\tiny$\za$};
\node[red] at (3.9,2.7) {\tiny$\ell^*_{i+1}$};
\node[red] at (0,2.7) {\tiny$\ell^*_{{i}}$};

\draw[bend right,thick](-1,2.5)to(5,2.5);
\draw[bend right,thick,->](1.4,2.8)to(2.6,2.8);
\node at (2,2.3) {\tiny$\aaa_{i+1}$};
\node at (7,1.5) {};
\end{tikzpicture}

\begin{tikzpicture}[>=stealth,scale=0.7]
\draw[red,thick] (0,0)--(-1,2)--(2,3)--(5,2)--(4,0);
\draw[red,thick](4,0)--(0,0);
\draw[red,thick,fill=red] (0,0) circle (0.1);
\draw[red,thick,fill=red] (-1,2) circle (0.1);
\draw[red,thick,fill=red] (2,3) circle (0.1);
\draw[red,thick,fill=red] (5,2) circle (0.1);
\draw[red,thick,fill=red] (4,0) circle (0.1);

\draw[thick,fill=white] (2,1) circle (0.1);
\node at (.8,.5) {\tiny$\bbp_{i+1}$};
\node at (.8,1.5) {\tiny$\za$};
\node[red] at (3.9,2.7) {\tiny$\ell^*_{i+1}$};
\node[red] at (0,2.7) {\tiny$\ell^*_{{i}}$};
\draw[bend right,thick](-1,2.5)to(5,2.5);
\draw[bend right,thick,->](1.4,2.8)to(2.6,2.8);
\node at (2,2.3) {\tiny$\aaa_{i+1}$};

\end{tikzpicture}}\]
\begin{center}
\caption{A zigzag curve $\za$ passes through a polygon $\bbp_{i+1}$ of $\zD_s^*$. If $\bbp_{i+1}$ contains a puncture and does not contain a bigon, then the puncture need not belong to the (unique) triangle formed by the segments of $\ell^*_{i}$, $\ell^*_{i+1}$ and $\za$, see the right picture. The arcs $\ell^*_{i}$ and $\ell^*_{i+1}$ may coincide, that is, $\bbp_{i+1}$ is a once punctured monogon surrounded by a $\rpoint$-arc $\ell^*_{i}=\ell^*_{i+1}$.}\label{figure:def-zigzag}
\end{center}
\end{figure}
\end{definition}

\begin{remark}
Note that the definition of the zigzag curve is independent on the choice of the direction of the curve. If $\bbp_{i+1}$ is a bigon with a puncture, then there are two triangles formed by the segments of $\ell^*_{i}$, $\ell^*_{i+1}$ and of the curve. In this case, the puncture is allowed to belong to any of these two triangles. However, different choices yield different zigzag curves which are not homotopic.
\end{remark}

{\bf Construction 1. String/band of a zigzag $\bpoint$-arc/closed curve.}

Let $\za$ be a zigzag curve on $(\cals,\calm,\zD^*_s)$, we associate a string $\omega(\za)$ of $A(\zD_s^*)$ to it in the following way.

(1) If $\za$ is a zigzag arc with notation as in {\bf Setting 1}.
Then for each $0\leq i \leq n-1$, there is an arrow in $A(\zD_s^*)$ from $\ell_i^*$ to $\ell_{i+1}^*$ arising from a corner of $\bbp_{i+1}$, which is a minimal oriented intersection and we denote it by $\aaa_{i+1}$, see the left picture in Figure \ref{figure:def-zigzag}.
We associate a walk $\omega(\za)=\omega_1\cdots\omega_n$ to $\za$, where $\omega_{i+1}=\aaa_{i+1}$ if $\za$ enter $\bbp_{i+1}$ through $\ell_i^*$ and leave through $\ell_{i+1}^*$, or $\omega_{i+1}=\aaa^{-1}_{i+1}$ if $\za$ enter $\bbp_{i+1}$ through $\ell_{i+1}^*$ and leave through $\ell_i^*$. Since $\za$ is zigzag, it is straightforward to see that $\omega(\za)$ is a string of $A(\zD_s^*)$.

(2) If $\za$ be a closed zigzag curve on $(\cals,\calm,\zD_s^*)$ with notations as in {\bf Setting 1}.
Then in a similar way, we associate a walk $\omega(\za)=\omega_1\cdots\omega_n$ to $\za$.
It is easy to see that $\omega(\za)$ is a band of $A(\zD_s^*)$, since $\za$ is a closed curve, and noticing that we need any closed curve to be primitive, so $\omega(\za)$ is not a power of any band.

\begin{example}\label{example:arcs and objects}
	The pictures in Figure \ref{figure:arcs and objects2} give examples of strings and bands arising from zigzag curves of a marked surface with the simple coordinate given in Figure \ref{figure:quiver of admissible dissection}.

	\begin{figure}[ht]
		\begin{center}
			\begin{tikzpicture}[scale=0.7,>=stealth]
				\draw[thick,fill=white] (0,0) circle (4cm);
				\draw[purple,fill=white,very thick] (-0.5,0) circle (1.4cm);				
                \draw[thick,fill=gray] (-0.5,0) circle (0.5cm);

				\draw[thick,red!50](0,-4)to(0,0);	
			    \draw[red!50,thick]plot [smooth,tension=1] coordinates {(0,-4) (-2.7,0) (-1.1,2) (0,0)};
			    \draw[red!50,thick]plot [smooth,tension=1] coordinates {(0,-4) (2.7,0) (1.1,2) (0,0)};

                \draw[bend right,thick,->,black!50](.6,-3.4)to(0,-3.2);
                \draw[bend right,thick,->,black!50](0,-3.2)to(-.6,-3.4);
                \draw[bend right,thick,->,black!50](.3,1)to(-.3,1);
                \draw[bend right,thick,->,black!50](0,-.6)to(.2,.6);
                \draw[black!50] (.5,0) node {\tiny$\ddd$};
                \draw[black!50] (.5,-2.8) node {\tiny$\aaa$};
                \draw[black!50] (-.5,-2.8) node {\tiny$\bbb$};
                \draw[black!50] (0,1.5) node {\tiny$\ccc$};

                \draw (1.2,-1.2) node {$\za'$};
                 \draw [purple](-1.6,0) node {$\za$};
                \draw[very thick]plot [smooth,tension=1] coordinates {(-1,0) (-0.5,-1) (1,0) (-.5,2) (-2.5,.5) (-.5,-2) (1.5,0)};
%
%                \draw (-1.4,-2) node {\tiny$1$};
%                \draw (2,-2) node {\tiny$3$};
%                \draw (.4,-2) node {\tiny$2$};
				\draw[thick,fill=white] (-1,0) circle (.1cm);
                \draw[thick,fill=white] (0,4)  circle (.1cm);
                \draw[thick,fill=white] (1.5,0) circle (.1cm);		
				\draw[thick,red,fill=red] (0,0) circle (.1cm);				
				\draw[thick,red,fill=red] (0,-4) circle (.1cm);						
%				\draw (0,-5.5) node {$(\cals,\calm,\zD^*_s)$};
				\draw (6,0) node {};
			\end{tikzpicture}
{\begin{tikzpicture}[scale=0.7,>=stealth]
				\draw[thick,fill=white] (0,0) circle (4cm);

                \draw[thick,fill=gray] (-0.5,0) circle (0.5cm);

                \draw[purple,fill=white,very thick] (-0.5,0) circle (2.6cm);	
				\draw[thick,red!50](0,-4)to(0,0);	
			    \draw[red!50,thick]plot [smooth,tension=1] coordinates {(0,-4) (-2.7,0) (-1.1,2) (0,0)};
			    \draw[red!50,thick]plot [smooth,tension=1] coordinates {(0,-4) (2.7,0) (1.1,2) (0,0)};

                \draw[bend right,thick,->,black!50](.6,-3.4)to(0,-3.2);
                \draw[bend right,thick,->,black!50](0,-3.2)to(-.6,-3.4);
                \draw[bend right,thick,->,black!50](.3,1)to(-.3,1);
                \draw[bend right,thick,->,black!50](0,-.6)to(.2,.6);
                \draw[very thick]plot [smooth,tension=1] coordinates {(-1,0) (-.3,-1) (1.5,0) (-.5,2) (-2,.5) (-.5,-1.3) (1,0)};

                \draw[black!50] (.5,0) node {\tiny$\ddd$};
                \draw[black!50] (.5,-2.8) node {\tiny$\aaa$};
                \draw[black!50] (-.5,-2.8) node {\tiny$\bbb$};
                \draw[black!50] (0,1.5) node {\tiny$\ccc$};

                \draw (1.2,-1) node {$\zb'$};
                 \draw [purple](-3.4,0) node {$\zb$};

%                \draw (-1.4,-2) node {\tiny$1$};
%                \draw (2,-2) node {\tiny$3$};
%                \draw (.4,-2) node {\tiny$2$};
				
%				\draw (0,-5.5) node {$(\cals,\calm,\zD^*_s)$};
				\draw[thick,fill=white] (-1,0) circle (.1cm);
                \draw[thick,fill=white] (0,4)  circle (.1cm);
                \draw[thick,fill=white] (1,0) circle (.1cm);
				\draw[thick,red,fill=red] (0,0) circle (.1cm);				
				\draw[thick,red,fill=red] (0,-4) circle (.1cm);	
			\end{tikzpicture}}
		\end{center}
		\begin{center}
			\caption{In the left picture, $\za$ is a closed zigzag curve which gives a band $\omega(\za)=\bbb\ccc^{-1}\ddd^{-1}$, while $\za'$ is a zigzag arc which gives a string $\omega(\za')=\bbb\ccc^{-1}\ddd^{-1}$. In the right picture, $\zb$ is a closed zigzag curve which gives a band $\omega(\zb)=\bbb\ccc^{-1}\aaa$, while $\zb'$ is a zigzag arc which gives a string $\omega(\zb')=\bbb\ccc^{-1}\aaa$.}\label{figure:arcs and objects2}
		\end{center}
	\end{figure}
\end{example}

\begin{definition}\label{prop-def:string and band from curves}
Let $(\cals,\calm,\zD^*_s)$ be a marked surface with a simple coordinate.
\begin{enumerate}[\rm(1)]
	\item  For a zigzag $\bpoint$-arc $\za$ on the surface, we call the string module $M_\za$ of $A(\zD_s^*)$ arising from $\omega(\za)$ the \emph{string module of $\za$}.

   \item For a zigzag closed curve $\za$ on the surface, we call the family of band modules $M_{(\za,\lambda,m)}$ of $A(\zD_s^*)$ arising from $\omega(\za)$ the \emph{band modules of $\za$}, for $\lambda\in k^*$ and $m\in \mathbb{N}$.
\end{enumerate}
	\end{definition}

Note that the string module defined in above is independent of the choice of the direction of $\za$. If we choose another direction of $\za$, then we obtain a string which is the inverse of $\omega(\za)$. So both $M_\za$ and $M_{(\za,\lambda,m)}$ are well-defined.

The following theorem is given in \cite[Theorem 2]{BC21}, up to a modification of the surface model, which establishes a bijection between the homotopy classes of zigzag curves on the surface and the isomorphism classes of (families) of indecomposable modules over the associated gentle algebra. For the convenience of the reader, we include a proof here, see Remark \ref{lem:compBC21} for more explanation of the relations between the following theorem and \cite[Theorem 2]{BC21}.

\begin{theorem}\label{theorem:main arcs and objects}
Let $(\cals,\calm,\zD_s^*)$ be a marked surface with a simple coordinate.
\begin{enumerate}[\rm(1)]
  \item The map $M: \za\mapsto M_\za$ gives a bijection between zigzag arcs on $(\cals,\calm,\zD^*_s)$ and string modules of $A(\zD^*_s)$. In particular, $M$ induces a bijection between arcs in the dual simple dissection $\zD_s$ and simple modules of $A(\zD^*_s)$.
  \item The map $M: \za\mapsto M_{(\za,\lambda,m)}$ for $\lambda\in k^*$ and $m\in \mathbb{N}$ gives a bijection between closed zigzag curves on $(\cals,\calm,\zD^*_s)$ and classes of band modules of $A(\zD^*_s)$.
\end{enumerate}
\end{theorem}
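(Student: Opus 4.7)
The plan is to factor $M$ through a bijection between zigzag curves on $(\cals,\calm,\zD^*_s)$ and strings/bands of $A(\zD^*_s)$, using the construction $\za \mapsto \omega(\za)$ described just before the theorem, and then compose with the classical Butler--Ringel classification which sends a string (respectively a band together with $(\lambda,m)$) to the string module $M_\za$ (respectively to the band module $M_{(\za,\lambda,m)}$). Hence the crux is to establish that $\za \mapsto \omega(\za)$ is a bijection, which I would do in three steps.

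\textbf{Step 1 (Well-definedness).} I verify that $\omega(\za)$ is a string when $\za$ is a zigzag arc and a band when $\za$ is a closed zigzag curve. The zigzag condition says that $\ell^*_{i+1}$ is the predecessor or successor of $\ell^*_i$ inside the polygon $\bbp_{i+1}$; this is exactly the condition that the shared endpoint of $\ell^*_i$ and $\ell^*_{i+1}$ contributes an arrow $\aaa_{i+1}$ of $Q(\zD^*_s)$, so each letter $\omega_{i+1}$ of the walk is $\aaa_{i+1}$ or $\aaa_{i+1}^{-1}$. The delicate point is to show that consecutive letters $\omega_{i+1}\omega_{i+2}$ neither cancel nor lie in the ideal $I(\zD^*_s)$. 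Geometrically, the corners giving $\aaa_{i+1}$ and $\aaa_{i+2}$ sit inside the adjacent polygons $\bbp_{i+1}$ and $\bbp_{i+2}$ on opposite sides of $\ell^*_{i+1}$, and a short local case analysis keyed to whether each letter is direct or inverse matches this geometric picture to the defining condition of $I(\zD^*_s)$ (a length-two path at the vertex $\ell^*_{i+1}$ lies in the ideal iff its two arrows meet at the same endpoint of $\ell^*_{i+1}$). In the closed case, primitivity of $\za$ translates into the condition that the resulting walk is not a proper power, so it is a band.

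\textbf{Step 2 (Inverse construction and bijectivity).} Given any string $\omega = \omega_1 \cdots \omega_n$ of $A(\zD^*_s)$, I read off from each letter the pair of adjacent arcs in some polygon of $\zD^*_s$ that it determines, and draw a curve that enters and exits each polygon $\bbp_{i+1}$ through these prescribed arcs; its homotopy class is pinned down because the sign (direct vs.\ inverse) of $\omega_{i+1}$ records on which side of the unique $\bpoint$-point of $\bbp_{i+1}$ the curve passes. The resulting curve is a zigzag arc satisfying $\omega(\za) = \omega$, and injectivity of $\za\mapsto\omega(\za)$ follows since a zigzag arc is determined up to homotopy by its ordered sequence of polygons and its entry/exit corners. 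For part (1), the statement about $\zD_s$ then falls out as the base case: each $\ell \in \zD_s$ crosses a unique dual arc $\ell^* \in \zD^*_s$, vacuously satisfies the zigzag condition, and gives the trivial string $1_{\ell^*}$, hence the simple module at the vertex $\ell^*$. For part (2), the analogous construction glues a cyclic walk into a closed curve, and the rotational ambiguity of bands matches the choice of starting polygon; the data $(\lambda,m) \in k^* \times \mathbb{N}$ are not visible on the surface but parametrize the standard family of band modules attached to the band $\omega(\za)$.

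The main obstacle is the local analysis in Step 1, particularly in polygons that contain a puncture. There the cyclic (rather than linear) order of edges produces extra combinatorial cases, and the additional clause in Definition \ref{definition:zigzag arcs} --- that the puncture does not lie inside the triangle formed by $\ell^*_i$, $\ell^*_{i+1}$ and the segment of $\za$ inside $\bbp_{i+1}$ --- is precisely what ensures that the walk meets the string/band condition rather than shortcutting around the puncture and creating a forbidden subword. Once this case analysis is settled, both bijections follow formally.
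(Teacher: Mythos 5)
Your proposal takes essentially the same route as the paper: factor $M$ through the map $\za \mapsto \omega(\za)$ and verify that this map is a bijection, with injectivity coming from the fact that the string records the ordered polygons and corners (hence the homotopy class), and surjectivity coming from the inverse construction that draws a curve through the prescribed corners. The only difference is that you make explicit the well-definedness check (that $\omega(\za)$ avoids relations and cancellations, including the punctured-polygon clause of Definition~\ref{definition:zigzag arcs}), which the paper delegates to \textbf{Construction 1} with the phrase ``it is straightforward to see''; your flagging of the corners as part of the data pinning down the homotopy class is also slightly more careful than the paper's phrasing, which mentions only the polygon sequence and endpoints.
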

\begin{proof}
(1) Note that $M$ is injective. In fact, if we have $M_{\za_1}=M_{\za_2}$ for two zigzag arcs $\za_1$ and $\za_2$, then $\omega(\za_1)=\omega(\za_2)$, where we change the direction of $\za_2$ if necessary. So $\za_1$ and $\za_2$ pass the same sequence of polygons of $\zD_s^*$. In particular, they have the same endpoints in $\calm_{\bpoint}$. Then $\za_1$ and $\za_2$ are homotopic with respect to the boundary components and the punctures.

Now we show that $M$ is surjective. Let $M=M(\omega)$ be a string module with $\omega=\omega_1\cdots\omega_n$, where for each $1\leq i \leq n$, $\omega_i=\aaa_i$ or $\omega_i=\aaa_i^{-1}$ for some arrow $\aaa_i$ of $Q(\zD^*_s)$. Then by the definition of $Q(\zD^*_s)$, each $\aaa_i$ connects two arcs $\ell_i^*$ and $\ell_{i+1}^*$in $\zD_s^*$, such that both of them belong to a polygon $\bbp_i$ of $\zD_s^*$, and $\ell_i^*$ is the predecessor or the successor of $\ell_{i+1}^*$.
In particular, $\ell^*_1$ and $\ell_2^*$ share a common polygon $\bbp_1$. We denote by $\bbp_0$ the other polygon of $\zD_s^*$ that contains $\ell_1^*$. Similarly, let $\bbp_{n+1}$ be the other polygon of $\zD_s^*$ (rather than $\bbp_n$) which contains $\ell_n^*$.
We construct a (unique) zigzag arc $\za$ whose endpoints are the (unique) marked points or punctures in $\bbp_0$ and $\bbp_{n+1}$, and successively pass through $\aaa_i$ or $\aaa_i^{-1}$ in polygons $\bbp_i, 1 \leq i \leq n$. Then we have $M_\za=M(\omega)$.

(2) The proof is similar to the proof of the first statement.
\end{proof}

\begin{remark}\label{lem:compBC21}
The surface model of the module category of a gentle algebra established in this paper can be viewed as a small modification of the surface model given in \cite{BC21}. 
The difference is that in \cite{BC21} unmarked boundary components except those contained in monogons or digons are removed, while in this paper, they are retained and contracted to punctures. Besides that, additional marked $\bpoint$-points are added to each boundary component forming the side of a polygon in the partial triangulation given by the simple coordinate. Then a zigzag curve is called a permissible curve in \cite{BC21}, up to the modification we made above.
It is these	changes which mean that one indecomposable module corresponds to an equivalence class of curves in \cite[Theorem 2]{BC21} while in above theorem it corresponds to a unique curve. 
\end{remark}

\subsection{Simples versus projectives}\label{section:dual of simples and projectives}

In this section, we study the relations among simple coordinates, projective coordinates, simple dissections and projective dissections on a marked surface. We will show that each of them can be obtained from the other one by dual and certain twist defined in the following.

\begin{definition}\label{definition:twist}
Let $(\cals,\calm)$ be a marked surface.
\begin{enumerate}[\rm(1)]
  \item Let $\ell$ be an arc on $(\cals,\calm)$. We call an arc the \emph{twist} of $\ell$ if it is obtained from $\ell$ by clockwise rotating each $\bpoint$-endpoint ($\rpoint$-endpoint resp.) on a boundary component to the next $\rpoint$-endpoint ($\bpoint$-endpoint resp.) of this component, and keeping the $\bpoint$-endpoint which is a puncture.
  We denote the twist of $\ell$ by $t(\ell)$.
  \item Dually, we define the \emph{anti-twist} of an arc $\ell$ by anti-clockwise rotating the endpoints of $\ell$, and denote it by $t^{-1}(\ell)$.
  \item Let $\zD^*_s$ be a simple coordinate on $(\cals,\calm)$. We call $t(\zD^*_s)$ ($t^{-1}(\zD^*_s)$ resp.) the \emph{twist} (\emph{anti-twist} resp.) of $\zD^*_s$, which is the collection of the twists (anti-twists resp.) of arcs in $\zD^*_s$. Similarly we define the twist as well as the anti-twist of a simple dissection, a projective dissection and a projective coordinate.

\end{enumerate}
\end{definition}

\begin{proposition}\label{proposition:twist}
Let $(\cals,\calm)$ be a marked surface.
\begin{enumerate}[\rm(1)]
  \item The twist and the anti-twist of a simple coordinate of $(\cals,\calm)$ are both projective dissections of $(\cals,\calm)$, and vice-verse.
  \item The twist and the anti-twist of a simple dissection on $(\cals,\calm)$ are both projective coordinates on $(\cals,\calm)$, and vice-verse.
\end{enumerate}
\end{proposition}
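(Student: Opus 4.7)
The two parts are analogous; I will describe the strategy for (1) and note that (2) follows by the identical argument after interchanging the roles of $\bpoint$- and $\rpoint$-boundary points (the only new ingredient being that $\zD_s$ and $\zD_p^*$ may have puncture endpoints, which are fixed by $t^{\pm 1}$). For (1), given a simple coordinate $\zD^*_s$, I need to verify that $t(\zD^*_s)$ satisfies the three defining properties of a projective dissection (Definition \ref{definition:addmissable dissections in prelimilary}): endpoints in $\calm_\bpoint$, no interior intersections, and each polygon contains exactly one marked point of $\calm_\rpoint\cup\calp_\bpoint$. The statement for $t^{-1}(\zD^*_s)$ is obtained by the same argument with ``clockwise'' replaced by ``anti-clockwise,'' and the vice-versa direction then follows because $t$ and $t^{-1}$ are mutually inverse.

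The endpoint condition is immediate: since marked points alternate colour along every boundary component, the clockwise neighbour of each $\rpoint$-point of $\zD^*_s$ lies in $\calm_\bpoint$, and $\zD^*_s$ has no puncture endpoints to worry about. For the remaining two conditions my central idea is to realise $t$ as a single ambient isotopy $\phi$ of $\cals$, supported in a collar neighbourhood of $\partial\cals$, which fixes all punctures and slides each boundary marked point one slot clockwise to the next one. This isotopy simultaneously pushes every arc of $\zD^*_s$ to its twist, so $\phi(\zD^*_s) = t(\zD^*_s)$; being a homeomorphism, $\phi$ preserves intersection numbers, and hence $t(\zD^*_s)$ has no interior intersections.

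For the polygon condition, $\phi$ sends each polygon $P$ of $\zD^*_s$ to a polygon $P' = \phi(P)$ of $t(\zD^*_s)$. Restricted to $\calm$ the map $\phi$ is a bijection that exchanges $\calm_\bpoint$ with $\calm_\rpoint$ on the boundary and fixes $\calp_\bpoint$ pointwise, so $\phi(\calm_\bpoint \cup \calp_\bpoint) = \calm_\rpoint \cup \calp_\bpoint$. Consequently $|P' \cap (\calm_\rpoint \cup \calp_\bpoint)| = |\phi(P) \cap \phi(\calm_\bpoint \cup \calp_\bpoint)| = |P \cap (\calm_\bpoint \cup \calp_\bpoint)| = 1$, the last equality because $P$ is a polygon of the simple coordinate $\zD^*_s$. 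This verifies the projective-dissection condition.

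The main obstacle I anticipate is making the isotopy $\phi$ rigorous, particularly checking that it can be chosen compatibly with all arcs of $\zD^*_s$ at once---that is, without introducing spurious intersections at boundary points where several arcs meet, and without disturbing interior parts of the surface or puncture endpoints. This is handled by a standard construction: take a product collar $N \cong \partial\cals \times [0,1)$, define $\phi$ on $N$ to rotate each boundary component by the angle corresponding to ``one slot clockwise,'' damped smoothly to the identity near the inner boundary of $N$, and extend by the identity outside $N$; one then verifies directly that this $\phi$ has the required effect on marked points and on arcs of $\zD^*_s$.
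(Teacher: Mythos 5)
Your proof is correct and in substance matches the paper's, which simply asserts that because the arcs are ``simultaneously twisted'' the non-crossing property and the one-marked-point-per-polygon condition are preserved; your ambient isotopy $\phi$ supported in a boundary collar is exactly the rigorous realisation of that simultaneity. Packaging the argument as ``$\phi$ is a homeomorphism swapping $\calm_\bpoint$ with $\calm_\rpoint$ and fixing $\calp_\bpoint$, so it preserves transversality and carries the polygon count across'' is a clean way of supplying the two conditions the paper leaves implicit.
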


\begin{proof}
We only prove the case for the twist of a simple coordinate $\zD^*_s$, the proof of the other cases are similar. Note that any arc $\ell^*$ in $\zD^*_s$ is a simple $\bpoint$-arc, and the twist $t(\ell^*)$ is a simple $\rpoint$-arc.
On the other hand, since we simultaneously twist the arcs in $\zD^*_s$, the new arcs in $t(\zD^*_s)$ has no interior intersections and they still cut the surface into polygons, and each of which contains exactly one $\bpoint$-puncture or $\rpoint$-boundary point. Thus $t(\zD^*_s)$ is a projective dissection.
\end{proof}

See Figure \ref{figure:admissible dissection-pre} for examples of the twists of simple/projective dissections and simple/projective coordinates.
The following theorem justifies the choice of the name projective dissections, that is, the arcs in the projective dissection $t(\zD^*_s)$ give rise to projective modules of the algebra $A(\zD^*_s)$.
\begin{theorem}\label{theorem:projectives from simple coordinates}
Let $(\cals,\calm, \zD_s^*)$ be a marked surface with a simple coordinate, where $\zD_s^*=\{\ell^*_1,\cdots,\ell^*_n\}$. Then for each $1 \leq i \leq n,$
 \begin{enumerate}[\rm(1)]
  \item the arc $t(\ell^*_i)$ is a zigzag arc which gives rise to the indecomposable projective module $P_i$ of $A(\zD^*_s)$ associated to the idempotent $\ell^*_i$, that is, we have $M_{t(\ell^*_i)}=P_i$;
  \item the arc $t^{-1}(\ell^*_i)$ is a zigzag arc which gives rise to the indecomposable injective module $I_i$ of $A(\zD^*_s)$ associated with the idempotent $\ell^*_i$, that is, we have $M_{t^{-1}(\ell^*_i)}=I_i$.
\end{enumerate}
\end{theorem}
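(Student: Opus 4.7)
The plan for part (1) is to identify $t(\ell_i^*)$ as a zigzag arc, compute its associated string via {\bf Construction 1}, and recognize this string as the standard string description of the indecomposable projective $P_i$ over the gentle algebra $A(\zD_s^*)$; part (2) follows by a dual argument using the anti-twist.

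First I would analyze the crossings of $t(\ell_i^*)$ with arcs of $\zD_s^*$. At each endpoint $r$ of $\ell_i^*$, list the arcs of $\zD_s^*$ incident at $r$ in clockwise order starting from the clockwise-boundary direction as $\ell_{r,1},\ldots,\ell_{r,M_r}$, and let $j=j_r$ be the position with $\ell_i^* = \ell_{r,j}$ (similarly $j'$ at the other endpoint $r'$). The core geometric claim is that the simple arc $t(\ell_i^*)$, traversed from $b=t(r)$ to $b'=t(r')$, crosses arcs in $\zD_s^*$ in the precise sequence
\[
\ell_{r,1},\ \ell_{r,2},\ \ldots,\ \ell_{r,j-1},\ \ell_i^*,\ \ell_{r',j'-1},\ \ldots,\ \ell_{r',1}.
\]
The $j-1$ crossings near $r$ arise because $b$ lies in the polygon wedged between the clockwise boundary at $r$ and $\ell_{r,1}$, and reaching the side of $\ell_i^*$ forces the arc to pass through the intervening sectors at $r$ that are delimited by consecutive $\ell_{r,k}$'s; symmetrically at $r'$. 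The central crossing of $\ell_i^*$ is forced by the surface orientation: the rule ``interior on the right'' places $b$ on one local side of $\ell_i^*$ at $r$ but places $b'$ on the opposite local side at $r'$, so the simple arc $t(\ell_i^*)$ must cross $\ell_i^*$.

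Given the crossing sequence, the zigzag condition is immediate: in each transit polygon the entry and exit arcs share a corner at $r$ or $r'$ and are adjacent in the clockwise order there, so they are predecessor and successor in the polygon. For a polygon containing a puncture, the segment of $t(\ell_i^*)$ inside stays close to the corner, since the twist displaces each endpoint by only one boundary segment, hence the puncture lies outside the triangle cut off by the entry and exit arcs and $t(\ell_i^*)$. Using the simple-coordinate arrow convention $\aaa_{r,k}:\ell_{r,k+1}\to\ell_{r,k}$ for consecutive arcs at $r$, one reads off
\[
\omega(t(\ell_i^*)) = u^{-1}v, \quad u = \aaa_{r,j-1}\cdots\aaa_{r,1}, \quad v = \aaa_{r',j'-1}\cdots\aaa_{r',1},
\]
two directed paths in $Q(\zD_s^*)$ emanating from the vertex $\ell_i^*$.

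To conclude, note that $u$ and $v$ are maximal direct paths at $\ell_i^*$: any extension past $\ell_{r,1}$ would use an arrow at the other endpoint of $\ell_{r,1}$, producing a composition at different endpoints of that vertex and therefore lying in $I(\zD_s^*)$. Since the indecomposable projective $P_i$ over a gentle algebra is precisely the string module $M(u^{-1}v)$ where $u,v$ are the two maximal direct paths starting at $i$, Theorem~\ref{theorem:main arcs and objects}(1) then yields $M_{t(\ell_i^*)}=P_i$, proving (1). The argument for (2) is dual: reversing clockwise and anti-clockwise throughout shows $t^{-1}(\ell_i^*)$ is a zigzag arc whose string is $u'(v')^{-1}$ with $u',v'$ the maximal direct paths \emph{ending} at $\ell_i^*$, which is the standard string description of the indecomposable injective $I_i$ (with socle $S_i$). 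The main obstacle is the rigorous orientation argument underpinning the crossing sequence, especially making precise the opposite-side claim for $b$ and $b'$ on a general oriented surface and handling the corner cases where $\ell_i^*$ is a loop, so its two incidences at $r$ must be treated as distinct, or where a transit polygon is a monogon or bigon around a puncture.
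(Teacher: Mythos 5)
Your proposal is correct and follows essentially the same route as the paper's proof. The paper proves part (1) by displaying Figure \ref{figure:simple to projective1}, which exhibits exactly the crossing pattern you describe (the fan of arcs at each endpoint of $\ell_i^*$ plus a single crossing of $\ell_i^*$ itself) and then reads off the string $\aaa_u^{-1}\cdots\aaa_1^{-1}\bbb_1\cdots\bbb_v$ as $u^{-1}v$ with $u,v$ maximal direct paths from $\ell_i^*$, concluding $M_{t(\ell_i^*)}=P_i$; part (2) is stated to be dual. You make explicit the orientation argument, the zigzag verification, and the maximality-via-relations step, all of which the paper leaves implicit in the figure.
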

\begin{proof}
(1) The proof directly follows from Figure \ref{figure:simple to projective1}, where the horizontal red arc is $\ell^*_i$ and the black arc is the twist $t(\ell^*_i)$.
In fact, by the definition of the relation set $I(\zD^*)$ of $A(\zD^*)$, both strings $\aaa_u^{-1}\cdots \aaa_1^{-1}$ and $\bbb_1\cdots \bbb_{u+v}$ in the picture are maximal in the sense that the compositions $\aaa_u\aaa$ and $\bbb_{u+v}\bbb$ are zero for any non-trivial arrows $\aaa$ and $\bbb$. So the string $\aaa_u^{-1}\cdots \aaa_1^{-1}\bbb_1\cdots \bbb_{u+v}$ gives rise to the projective module $P_i$, whose radical is the sum of (at most) two maximal uniserial submodules arising from $\aaa_u^{-1}\cdots \aaa_2^{-1}$ and $\bbb_2\cdots \bbb_{u+v}$.
	\begin{figure}[ht]
		\begin{center}
			\begin{tikzpicture}[scale=0.5,>=stealth]

                \draw [very thick,red] (-7.8,0) -- (7.8,0);

                \draw [thick,red!50] (-7.8,0) -- (1.5,1.4);
                \draw [thick,red!50] (-7.8,0) -- (-3,4);
                \draw [thick,red!50] (-7.8,0) -- (-6,4);
                \draw[bend right,thick,->,black!50](-3,0)to(-3.3,0.6);
                \draw[bend right,thick,->,black!50](-6.4,1.2)to(-7.1,1.5);
                \draw[black!50] (-2.2,0.3) node {\tiny$\aaa_1$};
                \draw[black!50] (-6.4,1.8) node {\tiny$\aaa_u$};
                \draw[red!50] (0,1.8) node {\tiny$\ell^*_1$};
                \draw[red!50] (-4.8,3.3) node {\tiny$\ell^*_{u-1}$};
                \draw[red!50] (-7,3.3) node {\tiny$\ell^*_{u}$};

                \draw [thick,red!50] (7.8,0) -- (-1.5,-1.4);
                \draw [thick,red!50] (7.8,0) -- (3,-4);
                \draw [thick,red!50] (7.8,0) -- (6,-4);
                \draw[bend right,thick,->,black!50](3,0)to(3.3,-0.6);
                \draw[bend right,thick,->,black!50](6.4,-1.2)to(7.1,-1.5);
                \draw[black!50] (2.2,-0.4) node {\tiny$\bbb_1$};
                \draw[black!50] (6.4,-1.8) node {\tiny$\bbb_v$};
                \draw[red!50] (0,-1.8) node {\tiny$\ell^*_{u+1}$};
                \draw[red!50] (2.7,-3) node {\tiny$\ell^*_{u+v-1}$};
                \draw[red!50] (5.5,-3) node {\tiny$\ell^*_{u+v}$};

                \draw[red] (-3,-0.8) node {$\ell^*_i$};
                \draw[] (-3,1.8) node {$t(\ell^*_i)$};
                \draw[bend left,thick](-9,4)to(-9,-4);
                \draw[bend right,thick](9,4)to(9,-4);
				\draw [very thick] (-8.5,3) -- (8.5,-3);

                \draw (-7.8,0) node {$\rpoint$};
                \draw (7.8,0) node {$\rpoint$};
                \draw[thick,fill=white] (-8.5,3) circle (.1cm);
                \draw[thick,fill=white] (8.5,-3) circle (.1cm);

			\end{tikzpicture}
         \end{center}
        \begin{center}
			\caption{The black $\bpoint$-arc is the twist $t(\ell_i^*)$ of an $\rpoint$-arc $\ell_i^*$ from a simple coordinate $\zD_s^*$, which gives rise to the indecomposable projective module of $A(\zD^*_s)$ associated to $\ell_i^*$.}\label{figure:simple to projective1}
		\end{center}
	\end{figure}

(2) The proof is dual to the proof of the first statement.
\end{proof}

{\bf Setting 2:} {\it Let $\zD_s^*$ be a simple coordinate on a marked surface $(\cals,\calm)$. We fix the following notations in the rest of this section: denote by $\zD_p$ the twist of $\zD^*_s$ and denote by $\zD_i$ the inverse twist of $\zD^*_s$. The dual of $\zD_p$ will be denoted by $\zD^*_p$ and the dual of $\zD^*_s$ will be denoted by $\zD_s$. For any curve, we always assume that it is at the minimal position with respect to both coordinates $\zD^*_s$ and $\zD^*_p$.}

Now for the `coordinates' and `dissections' we have three operations, the twist $t(-)$, the anti-twist $t^{-1}(-)$ defined above, and the dual $(-)^*$. It is easy to see that the twist and the anti-twist are inverse with each other. It is also easy to see that the dual is an involution which commutes with both the twist and the anti-twist. That is we have the following

\begin{lemma}\label{lem:comm-diag-four-opp}
The following equalities hold:
\[(-)^{**}=id(-),~ t^{-1}t(-)=tt^{-1}(-)=id(-),~ (t(-))^*=t((-)^*),~ (t^{-1}(-))^*=t^{-1}((-)^*),\]
that is, the diagram in Figure \ref{figure:dual-twist} commutes.

\begin{figure}[ht]
	\begin{center}
		\begin{tikzpicture}[scale=0.4,>=stealth]
			\draw (-4,4) node {$\zD^*_s$};
			\draw (4,4) node {$\zD_s$};
			\draw (4,-4) node {$\zD^*_p$};
			\draw (-4,-4) node {$\zD_p$};
			\draw (-10,0) node {$\zD_i$};
			\draw (0,4.5) node {$*$};
			\draw (0,-4.5) node {$*$};
			\draw (-4.7, 0) node {$t$};
			\draw (-3, 0) node {$t^{-1}$};
			\draw (4.7, 0) node {$t$};
			\draw (3.2, 0) node {$t^{-1}$};
			\draw (-7, 2.5) node {$t^{-1}$};
			\draw (-7.2, -2.7) node {$t^{-2}$};

			\draw [thick,<->] (-3.4,4) -- (3.4,4);
			\draw [thick,<->] (-3.4,-4) -- (3.4,-4);
			\draw [thick,->] (-3.8,-3.4) -- (-3.8,3.4);
			\draw [thick,<-] (-4.2,-3.4) -- (-4.2,3.4);
			\draw [thick,->] (3.8,-3.4) -- (3.8,3.4);
			\draw [thick,<-] (4.2,-3.4) -- (4.2,3.4);
			\draw [thick,->] (-4.5,3.5) -- (-9.7,0.3);
			\draw [thick,->] (-4.5,-3.5) -- (-9.5,-0.5);
		\end{tikzpicture}
	\end{center}
	\begin{center}
		\caption{The dissections and coordinates are related by a duality $*$, a twist $t$ and an anti-twist $t^{-1}$, where $\zD_i$ is a dissection whose arcs correspond the indecomposable injective modules, while the arcs in $\zD_s$ and $\zD_p$ respectively give rise to simple modules and indecomposable projective modules, see Theorems \ref{theorem:main arcs and objects} and \ref{theorem:projectives from simple coordinates}.}\label{figure:dual-twist}
	\end{center}
\end{figure}
\end{lemma}

\subsection{Module categories versus derived categories}\label{subsection:Module category vs derived category}

As recalled in subsection \ref{subsection: derived categories and derived categories}, $\zD^*_p$ plays an important role in the geometric model of the derived category of $A(\zD^*_p)$, where $\zD^*_p$ is a kind of `coordinate' of projective complexes. In this subsection we will connect the geometric model of the module category established above (as a modification of the model given in \cite{BC21}) and the geometric model of the derived category established in \cite{OPS18}.

We start with giving a direct description how $\zD^*_p=t((\zD_s^*)^*)$ is obtained directly from $\zD^*_s$, recalling that we are in {\bf Setting 2}.

Note that the simple coordinate $\zD^*_s$ cuts the surface into polygons, each of which contains a marked point from $\calm_{\bpoint}$ or from $\calp_{\bpoint}$, see the pictures in Figure \ref{figure:two polygons}. Then the pictures in Figure \ref{figure:simple to projective} depict the operation $t((-)^*)$ locally for each polygon of $\zD^*_s$, where after gluing the green segments together, we get the final arcs in $\zD^*_p$.

\begin{figure}[ht]
 \[\scalebox{1}{
\begin{tikzpicture}[>=stealth,scale=0.8]
\draw[red,thick] (0,0)--(-1,2)--(2,3)--(5,2)--(4,0);
\draw[thick](4,0)--(0,0);

\draw[black!50] (2,0)--(-2.5,1.2);
\draw[black!50] (2,0)--(-.3,3.5);
\draw[black!50] (2,0)--(-2.5+8,1);
\draw[black!50] (2,0)--(-.3+4,3.5);

\draw[dark-green,thick] (0,0)--(-2.3,2);
\draw[dark-green,thick] (0,0)--(.6,4);
\draw[dark-green,thick,bend left] (0,0)to(5,2.75);
\draw[dark-green,thick,bend left] (0,0)to(5.8,0);

\draw[thick,fill=white] (2,0) circle (0.1);
\node[red] at (1.2,2.4) {\tiny$\ell^*_{i}$};
\node[dark-green] at (.7,4.5) {\tiny$t(\ell_{i})$};
\node[black!50] at (-.5,3.7) {\tiny$\ell_{i}$};
\node[black!50] at (6,1.2) {\tiny$\ell_{m}$};
\node[red] at (4.2,1.2) {\tiny$\ell^*_{m}$};
\node[dark-green] at (6.5,0) {\tiny$t(\ell_{m})$};
\node[red] at (-.3,1.2) {\tiny$\ell^*_{1}$};
\node[dark-green] at (-2.3,2.3) {\tiny$t(\ell_{1})$};
\node[black!50] at (-2.8,1.2) {\tiny$\ell_{1}$};
%\node at (.8,1.5) {\tiny$\BBP$};
\node at (2,-.3) {\tiny$q$};
%\node at (2,3.5) {\tiny$p_2$};
\draw[red,thick,fill=red] (0,0) circle (0.08);
\draw[red,thick,fill=red] (-1,2) circle (0.08);
\draw[red,thick,fill=red] (2,3) circle (0.08);
\draw[red,thick,fill=red] (5,2) circle (0.08);
\draw[red,thick,fill=red] (4,0) circle (0.08);
\node at (7,1.5) {};
\end{tikzpicture}

\begin{tikzpicture}[>=stealth,scale=0.8]
\draw[red,thick] (0,0)--(-1,2)--(2,3)--(5,2)--(4,0);
\draw[red,thick](4,0)--(0,0);
\draw[red,thick,fill=red] (0,0) circle (0.08);
\draw[red,thick,fill=red] (-1,2) circle (0.08);
\draw[red,thick,fill=red] (2,3) circle (0.08);
\draw[red,thick,fill=red] (5,2) circle (0.08);
\draw[red,thick,fill=red] (4,0) circle (0.08);

\draw[black!50] (2,1.5)--(2,-1);
\draw[black!50] (2,1.5)--(-1.5,0.5);
\draw[black!50] (2,1.5)--(-1,3.5);
\draw[black!50] (2,1.5)--(6.5,0.5);
\draw[black!50] (2,1.5)--(4,3.5);

\draw[dark-green,thick]plot [smooth,tension=1] coordinates {(2,1.5) (1.9,0) (1.65,-1)};
\draw[dark-green,thick]plot [smooth,tension=1] coordinates {(2,1.5) (-0.5,0.99) (-1.6,0.9)};
\draw[dark-green,thick]plot [smooth,tension=1] coordinates {(2,1.55) (0.23,3) (-.5,4)};
\draw[dark-green,thick]plot [smooth,tension=1] coordinates {(2,1.5) (3.2,2.5) (5,3.5)};
\draw[dark-green,thick]plot [smooth,tension=1] coordinates {(2,1.5) (4.5,0.8) (6.5,-.2)};

%\node at (0.2,2.8) {\tiny$\ell_{2}$};
%\node at (5.2,.8) {\tiny$\ell_{m-1}$};
%\node at (2,.4) {\tiny$\ell_{m}$};
\node[red] at (0,2) {\tiny$\ell^*_{i}$};
\node[dark-green] at (.3,3.7) {\tiny$t(\ell_{i})$};
\node[black!50] at (-.4,2.8) {\tiny$\ell_{i}$};
%\node at (2,3.5) {\tiny$p_2$};
\node at (2,1.8) {\tiny$q$};
\draw[thick,fill=white] (2,1.5) circle (0.1);

%\node at (.8,1.5) {\tiny$\BBP$};
\end{tikzpicture}}\]
\begin{center}
\caption{The pictures show how the operation $t((-)^*)$ acts locally on the arcs in $\zD^*_s$, where an arc $\ell^*_i$ from $\zD^*_s$ is red, an arc $\ell_i$ from $\zD_s$ is black, and an arc $t(\ell_i)$ from $\zD^*_p$ is green. We obtain $t(\ell_i)$ by firstly drawing the dual arc $\ell_i$ of $\ell^*_i$, and then clockwise rotating the $\bpoint$-endpoint to the next $\rpoint$-point of the boundary component if the endpoint of $\ell_i$ is on a boundary component, see the left picture, or just keeping the endpoint of $\ell_i$ if it is a puncture, see the right picture. }
\label{figure:simple to projective}
\end{center}
\end{figure}

Now we prove the following

\begin{lemma}\label{lemma:iso of algebras}
We have a canonical isomorphism $A(\zD^*_s)\cong A(\zD^*_p)$, which sends the idempotent associated to $\ell^*_i$ to the idempotent associated to $t(\ell_i)$.
\end{lemma}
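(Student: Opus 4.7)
The plan is to construct an explicit isomorphism $\phi\colon A(\zD^*_s)\to A(\zD^*_p)$ at the level of quivers with relations. On vertices, set $\phi(e_{\ell^*_i}) = e_{t(\ell_i)}$; this is a bijection because $\zD^*_p = t(\zD_s)$ is obtained from the dual simple dissection $\zD_s$ by applying the twist arc by arc, and the duality $(-)^*$ is already a bijection between the arcs of $\zD^*_s$ and those of $\zD_s$.

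For the arrows, the key geometric input is the local picture in Figure~\ref{figure:simple to projective}: inside each polygon $\bbp$ of $\zD^*_s$ (containing a unique $\bpoint$-point $b$), the composite operation $t\circ(-)^*$ takes the bounding arcs $\ell^*_1,\ldots,\ell^*_m$ of $\bbp$ to arcs $t(\ell_1),\ldots,t(\ell_m)\in\zD^*_p$ that share a common endpoint $q'$ (equal to $b$ if $b\in\calp_\bpoint$, or to the next $\rpoint$-point clockwise from $b$ along the boundary if $b\in\calm_\bpoint$). Moreover, the cyclic order of the $t(\ell_i)$'s around $q'$ matches the cyclic order of the $\ell^*_i$'s around $\bbp$ as seen from $b$. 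Now each arrow $\aaa\colon\ell^*_i\to\ell^*_j$ of $Q(\zD^*_s)$ arises from the anti-clockwise succession at a shared $\rpoint$-endpoint $r$, and its sector at $r$ lies in a unique polygon $\bbp$ of $\zD^*_s$. I define $\phi(\aaa)$ to be the arrow $t(\ell_i)\to t(\ell_j)$ in $Q(\zD^*_p)$ given by the clockwise succession of these two (cyclically) consecutive arcs around the associated common endpoint $q'$. The process is clearly reversible --- each arrow of $Q(\zD^*_p)$ at a shared endpoint comes, by untwisting, from a unique polygon of $\zD^*_s$ --- so $\phi$ is a bijection on arrows.

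The remaining step is to verify that $\phi$ matches the relations. In $A(\zD^*_s)$ a path $\aaa\bbb$ with middle vertex $\ell^*_j$ is nonzero iff $\aaa,\bbb$ arise at the \emph{same} endpoint of $\ell^*_j$, while in $A(\zD^*_p)$ a path $\phi(\aaa)\phi(\bbb)$ with middle vertex $t(\ell_j)$ is nonzero iff it arises at \emph{different} endpoints of $t(\ell_j)$. The arc $\ell^*_j$ borders exactly two polygons $\bbp^l,\bbp^r$ of $\zD^*_s$, and under $t\circ(-)^*$ these correspond to the two distinct endpoints of $t(\ell_j)$. A short local analysis at each $\rpoint$-endpoint of $\ell^*_j$ shows that the unique incoming arrow and the unique outgoing arrow of $\ell^*_j$ at that endpoint lie on opposite sides of $\ell^*_j$, hence in different polygons among $\{\bbp^l,\bbp^r\}$. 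Consequently, two composable arrows $\aaa,\bbb$ at $\ell^*_j$ lie in the same polygon among $\{\bbp^l,\bbp^r\}$ iff they arise at different endpoints of $\ell^*_j$; via the bijection above, this is exactly the condition that $\phi(\aaa),\phi(\bbb)$ meet at the same endpoint of $t(\ell_j)$. Hence $\aaa\bbb\in I(\zD^*_s)\iff\phi(\aaa)\phi(\bbb)\in I(\zD^*_p)$, and $\phi$ descends to the desired algebra isomorphism.

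The main obstacle is the careful local orientation check in the last step: one has to track how the anti-clockwise succession convention defining arrows of $Q(\zD^*_s)$ interacts with the clockwise succession convention defining arrows of $Q(\zD^*_p)$ through the twist-dual operation, and in particular confirm that the ``side of $\ell^*_j$'' data (which of $\bbp^l,\bbp^r$ contains a given sector) translates correctly into ``endpoint of $t(\ell_j)$'' data in $\zD^*_p$. All other steps are formal once the local picture in Figure~\ref{figure:simple to projective} has been unpacked.
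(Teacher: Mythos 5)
Your proof is correct and follows essentially the same route as the paper's: fix the obvious bijection on vertices, match arrows via the local picture of Figure~\ref{figure:simple to projective} (arrows of $Q(\zD^*_s)$ at an $\rpoint$-endpoint correspond to arrows of $Q(\zD^*_p)$ at the endpoint $q'(\bbp)$ coming from the polygon $\bbp$ containing the sector), and then observe that the relation conditions (``same endpoint'' for $I(\zD^*_s)$, ``different endpoints'' for $I(\zD^*_p)$) swap under this correspondence. Your write-up is a bit more explicit than the paper's on the last step---you spell out the ``polygon side of $\ell^*_j$ $\leftrightarrow$ endpoint of $t(\ell_j)$'' translation and the local fact that the incoming and outgoing arrow at a fixed $\rpoint$-endpoint of $\ell^*_j$ lie on opposite sides of $\ell^*_j$, which the paper asserts without elaboration---but the underlying argument is the same.
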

\begin{proof}
At first, note that there is a canonical isomorphism between quivers $Q(\zD^*_s)$ and $Q(\zD^*_p)$ which sends the vertex $\ell^*_i$ to the vertex $t(\ell_i)$.
This can be directly checked by the construction of the operation $t((-)^*)$, as locally depicted in Figure \ref{figure:simple to projective}, and the construction of the arrows in $Q(\zD^*_s)$ and $Q(\zD^*_p)$ respectively from subsection \ref{subsection: Modules as curves} and Definition \ref{definition:gentle algebra from dissection}.
%Let $\ell^*_i$ and $\ell^*_j$ be two arcs in $\zD^*_s$. the arcs$\ell_j^*$ and $\ell_i^*$ share an endpoint if and only if $t(\ell_j)$ directly follows $t(\ell_i)$ clockwise at the endpoint if and only if $\ell_j^*$ directly follows $\ell_i^*$ anti-clockwise at the corresponding endpoint. So there is an arrow from $t(\ell_i)$ to $t(\ell_j)$ if and only if there is a corresponding arrow from $\ell_i^*$ to $\ell_j^*$. 
Moreover, note that two consecutive arrows in $Q(\zD^*_p)$ arise from a common endpoint if and only if the corresponding arrows in $Q(\zD^*_s)$ comes from different endpoints. So $I(\zD^*_s)$ and $I(\zD^*_p)$ also coincide by comparing the definitions. Therefore we have a canonical isomorphism $A(\zD^*_s)\cong A(\zD^*_p)$, which sends the idempotent associated to $\ell^*_i$ to the idempotent associated to $t(\ell_i)$.
\end{proof}

Thanks to above lemma, in the following we will not distinguish $A(\zD^*_p)$ and $A(\zD^*_s)$, and denote them by $A$. In particular, we will not distinguish the idempotents associated to $t(\ell_i)$ and $\ell_i^*$, and view an arrow from $t(\ell_i)$ to $t(\ell_j)$ in $A(\zD^*_p)$ as an arrow from $\ell_i^*$ to $\ell_j^*$ in $A(\zD^*_s)$.

In the following, we will show that a zigzag curve also represents the minimal projective resolution of the associated module. We begin with introducing a canonical grading for a zigzag curve.

\begin{definition}\label{definition:canonical complex}
 Let $(\cals,\calm,\zD_p^*)$ be a marked surface with a projective coordinate.
 \begin{enumerate}[\rm(1)]
  \item For any $\bpoint$-arc $\za$ on $(\cals,\calm)$, let $f_0$ be a grading of $\za$ such that $f_0(q)\leq 0$ for any $q\in \za\cap \zD^*_p$ and such that there exists at least one $q\in \za\cap \zD^*_p$ with $f_0(q)=0$. Denote by $\P_\za$ the projective complex associated to $(\za,f_0)$. We call $f_0$ and $\P_\za$ the \emph{canonical grading} and the \emph{canonical projective complex} of $\za$ (w.r.t $\zD_p^*$) respectively.

  \item For any gradable closed curve $\za$ on $(\cals,\calm,\zD^*_p)$ and any $\lambda\in k^*, m\in \mathbb{N}$, we similarly define the  \emph{canonical grading} $f_0$ and the \emph{canonical projective complex} $\P_{(\za,\lambda,m)}$.
 \end{enumerate}
  \end{definition}

We mention that $f_0$ is unique and thus $\P_\za$ and $\P_{(\za,\lambda,m)}$ are well-defined.
The following is a main result in this subsection, whose proof will be separated into several propositions.

\begin{theorem}\label{theorem:object}
Let $(\cals,\calm)$ be a marked surface with a simple coordinate $\zD_s^*$, and let $\zD_p^*$ be the corresponding projective coordinate of $\zD_s^*$.
 \begin{enumerate}[\rm(1)]
  \item For any zigzag arc $\za$ on the surface, $\P_{\za}$ is the projective resolution of $M_{\za}$.

  \item Any zigzag closed curve $\za$ on the surface is gradable with respect to $\zD^*_p$, and  $\P_{(\za,\lambda,m)}$ is the projective resolution of $M_{(\za,\lambda,m)}$ for any $\lambda\in k^*$ and $m\in \mathbb{N}$.
 \end{enumerate}
  \end{theorem}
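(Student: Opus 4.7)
Plan: My approach is to identify, term by term and map by map, the canonical projective complex $\P_\za$ (built with respect to $\zD^*_p$) with the classical minimal projective resolution of the string/band module $M_\za$, using the explicit combinatorics of gentle algebras together with the local picture in Figure \ref{figure:simple to projective}. First I would set up the local picture: the simple coordinate $\zD^*_s$ cuts the surface into polygons, and by Figure \ref{figure:simple to projective}, the arcs of $\zD^*_p$ incident to the $\bpoint$-point $v$ of such a polygon $\bbp$ with boundary arcs $\ell^*_1,\dots,\ell^*_m$ form a fan consisting of the twists $t(\ell_1),\dots,t(\ell_m)$. A zigzag segment of $\za$ that enters $\bbp$ through $\ell^*_i$ and exits through $\ell^*_{i\pm 1}$ therefore crosses exactly the block of fan-arcs separating its entry from its exit, producing a consecutive run of intersection points of $\za$ with $\zD^*_p$ whose canonical grading $f_0$ is monotone.

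Second, I would read off the differential on each such block via the morphism formulas \eqref{equ:morph1}--\eqref{equ:morph3}. By Lemma \ref{lemma:iso of algebras} the idempotent of $A$ attached to $t(\ell_i)\in\zD^*_p$ equals the idempotent attached to $\ell^*_i\in\zD^*_s$; hence the projective summands of $\P_\za$ produced by the block agree with those of the projective cover of the corresponding syllable of $\omega(\za)$, and the maps between consecutive summands are precisely the canonical arrows $\aaa_{i+1}$ (or their formal inverses) coming from the gentle presentation. The zigzag alternation of $\za$ between predecessor- and successor-moves guarantees that the global grading $f_0$ alternates between local maxima and local minima; the maxima sit in degree $0$ by Definition \ref{definition:canonical complex}, and a direct cokernel computation identifies $H^0(\P_\za)$ with $M_\za$ via the string description of Definition \ref{prop-def:string and band from curves}. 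Exactness in negative degrees is then a local check: each interior polygon of $\zD^*_s$ contributes a Koszul-type short piece (two projective covers of the socle of a uniserial subquotient glued by a relation), and these pieces glue along shared summands to give the standard minimal-resolution pattern for string modules over gentle algebras.

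Third, for part (2), gradability of a closed zigzag curve with respect to $\zD^*_p$ follows from the zigzag property: one circuit of $\za$ produces equal numbers of direct and inverse syllables in $\omega(\za)$, so the total grading change around $\za$ is zero and $f_0$ is well defined. The homotopy band complex $\P_{(\za,\lambda,m)}$ is then the same periodic pattern as above, with each summand of rank $m$ and with one of the boundary differentials twisted by the Jordan block $B(\lambda)_m$; the same local acyclicity argument, applied to the universal cover and descended, shows that $\P_{(\za,\lambda,m)}$ is the minimal projective resolution of $M_{(\za,\lambda,m)}$.

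The main obstacle will be the punctured case. When an endpoint of $\za$ is a puncture $p$, the OPS18 construction recalled in subsection \ref{subsection: derived categories and derived categories} replaces $\za$ by an infinite clockwise spiral $\wt\za$ around $p$, so $\P_\za$ becomes an unbounded complex; I would need to check that the resulting infinite tail is exact and realizes the \emph{minimal} projective resolution rather than some other acyclic complex. Exactness reduces to showing that the arrows of $A$ attached to the fan of $\zD^*_p$ around $p$ form a cycle all of whose length-two subpaths lie in $I$, so that the associated infinite periodic complex of projectives is acyclic; this is exactly the local structure forced by the gentle definition at a puncture. The subtle point is that the extra condition in Definition \ref{definition:zigzag arcs} forbidding the puncture from lying inside the triangle cut off by $\ell^*_i$, $\ell^*_{i+1}$ and $\za$ is precisely the condition ensuring that the spiral $\wt\za$ winds on the correct side of $p$ so that the infinite tail is a \emph{resolution} (negatively graded) rather than a co-resolution; I would verify this with an explicit local picture near $p$, which will also confirm minimality by inspecting that no differential factors through a direct summand.
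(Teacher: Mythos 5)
Your proposal takes a genuinely different route from the paper.  The paper first proves the claim for arcs in $\zD_s$ (simple modules) by an explicit term-by-term algebraic construction of the minimal projective resolution (Proposition~\ref{proposition:object-simple}), and then bootstraps the general string case by \emph{induction on string length}, removing one arrow from $\omega(\za)$, obtaining a short exact sequence $0\to M_\zb\to M_\za\to M_\ell\to 0$ (or its mirror), and invoking the correspondence between smoothings of arcs and mapping cones from \cite[Theorem~4.1]{OPS18}; the band case is handled analogously via smoothing a closed curve at a canonical self-intersection (Propositions~\ref{proposition:object-any arc} and \ref{proposition:object-closed curve}).  You instead propose a single direct identification of $\P_\za$ with the classical minimal resolution, verified term by term and polygon by polygon.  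Both routes can be made to work; yours avoids the dependence on \cite[Theorem~4.1]{OPS18} and is closer in spirit to Theorem~\ref{thm:main-projective-res} (the homology-completion description), while the paper's inductive argument is shorter because it reuses established machinery.

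Two places in your write-up are more assertion than argument.  First, the claim that ``exactness in negative degrees is a local check'' per polygon is not quite right as stated: acyclicity of a homotopy string complex is a global property governed by the \emph{maximality} of the direct/inverse tails (precisely the content of the homology completion in Definition~\ref{definition:homology completion}); local pieces contributed by individual polygons do not individually certify exactness, and you would still need to track, for each degree, that the kernel of one differential is the image of the previous one across polygon boundaries.  What your plan is implicitly doing is re-proving the gentle-algebra resolution pattern of Theorem~\ref{thm:main-projective-res} from scratch, which is fine, but you should say so and carry it out rather than label it a local Koszul phenomenon.  Second, the band argument via ``universal cover and descend'' glosses over what is really needed: a band module has projective dimension one, so $\P_{(\za,\lambda,m)}$ is a two-term complex, and the only nontrivial point is that its single differential has the right kernel and cokernel; the Jordan block $B(\lambda)_m$ lives only in one entry and does not affect acyclicity, so the ``periodic pattern glued by $B(\lambda)_m$'' framing overstates the complexity.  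Your treatment of the punctured case, by contrast, correctly isolates the crucial point (the fan of $\zD^*_p$ around a puncture carries a full cycle of length-two relations, forcing the infinite tail to be acyclic, with the extra condition in Definition~\ref{definition:zigzag arcs} ensuring the spiral winds to the resolving side); that part of the plan is sound and is essentially what underlies Figure~\ref{figure:projective resolution of simple} in the paper.
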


\begin{remark}\label{remark:coincide-objects}
The above theorem shows that a zigzag curve on $(\cals,\calm,\zD^*_s)$ does not only represent a module in the module category of $A(\zD_s^*)$, but also represents the projective resolution of this module in the derived category of $A(\zD_s^*)$. This means that for any gentle algebra, on the level of the objects, the geometric model $(\cals,\calm,\zD^*_s)$ of its module category established in above subsection can be naturally embedded into the geometric model $(\cals,\calm,\zD^*_p)$ of the derived category established in \cite{OPS18}. This fact allows us to use the derived category to study the module category, in particular, we will give a complete description of the homomorphisms as well as the extensions between two indecomposable modules in the next subsections.
\end{remark}

\begin{proposition}\label{proposition:object-simple}
For any arc $\ell$ in $\zD_s$, the canonical projective complex $\P_{\ell}$ with respect to $\zD^*_p$ is the projective resolution of the simple module $M_{\ell}$.
\end{proposition}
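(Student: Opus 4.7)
The plan is to verify directly, through a local analysis around each endpoint of $\ell$, that the graded homotopy string associated to $(\ell, f_0)$ coincides with the standard minimal projective resolution of the simple module $M_\ell = S_i$ over $A$. Here the vertex $i$ is the one indexed, via Theorem \ref{theorem:main arcs and objects}(1), by the dual arc $\ell^*_i \in \zD^*_s$ of $\ell = \ell_i \in \zD_s$; by Lemma \ref{lemma:iso of algebras} this is the same vertex as the one indexed by $t(\ell_i) \in \zD^*_p$.

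The key step is a local analysis of the intersections $\ell \cap \zD^*_p$ around each endpoint $q$ of $\ell$, using the picture in Figure \ref{figure:simple to projective}. A polygon $\bbp$ of $\zD^*_s$ containing $q$ is bounded by arcs $\ell^*_1, \ldots, \ell^*_m$, and the arcs of $\zD^*_p$ meeting this region are exactly the twists $t(\ell_1), \ldots, t(\ell_m)$ of the dual arcs. I would check that $\ell_i$ emerges from $q$ and crosses $t(\ell_i)$ first, and that by inspecting the clockwise order of arcs around $q$ it next crosses $t(\ell_{i-1})$ (or $t(\ell_{i+1})$), and then proceeds into the next polygon of $\zD^*_p$ in a zigzag manner. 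The successive minimal oriented intersections on this side reproduce the maximal direct (or inverse) string in $Q$ starting with the arrow determined by the corner at $q$, which is precisely the string whose projective cover delivers the next syzygy of $S_i$. This also shows that $\ell$ is a zigzag arc on $(\cals,\calm,\zD^*_p)$ and yields the complete list of intersections $\ell \cap \zD^*_p$.

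From this local picture I would read off $\P_\ell$: the unique intersection with $t(\ell_i)$ carries $P_i$, the canonical grading forces $f_0 = 0$ there and strictly negative at all subsequent crossings (because at each subsequent polygon of $\zD^*_p$ that $\ell$ traverses, the $\bpoint$-point sits on the side that decreases the grading), and the successive crossings together with the arrows they induce reproduce the known minimal projective resolution of $S_i$ over a gentle algebra (see, e.g., \cite{HS05}). Assembling the two halves and comparing term by term yields the proposition. The main obstacle is the puncture case: when $q \in \calp_{\bpoint}$, the arc $\ell$ has to be extended to an infinite lift wrapping clockwise infinitely many times around $q$, as in subsection \ref{subsection: derived categories and derived categories}, and one must verify that the resulting infinite zigzag matches the eventually periodic infinite projective resolution of $S_i$, which reduces to checking that the cyclic order of arcs around $q$ controls both the winding pattern and the periodic syzygy arising from the cycle of relations based at $i$.
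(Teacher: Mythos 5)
Your plan is essentially the paper's own approach: read off the graded homotopy string of $\ell$ with respect to $\zD^*_p$ by a local analysis of $\ell\cap\zD^*_p$, using the picture in Figure \ref{figure:simple to projective}, and match it term by term with the standard minimal projective resolution of $S_i$ over the gentle algebra. The paper presents the same comparison in the reverse order---first building the algebraic resolution $\P(\sigma)$ explicitly from the uniserial radical summands $U_{i_k},U_{j_k}$ of the projectives, then appealing to Proposition \ref{prop:obj-in-derived-cat} to obtain a graded arc and identifying that arc with $\ell$ by reading the surface picture (Figure \ref{figure:projective resolution of simple})---so the content is the same.

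One detail to correct in your local analysis: the claim that $\ell=\ell_i$ crosses $t(\ell_i)$ first as it leaves $q$ is not right in general. The unique crossing of $\ell_i$ with $t(\ell_i)$ is the one where the canonical grading $f_0$ equals $0$ (it carries $P_i$), and it occurs roughly where $\ell_i$ meets $\ell^*_i$, not adjacent to the endpoint. At the intersection $p_1\in\ell_i\cap\zD^*_p$ nearest to $q$ the grading is $f_0(p_1)=-w_q(\ell_i)$ (see Definition \ref{definition: weight} and the computation in the proof of Theorem \ref{theorem:main-extensions}), which is strictly negative unless $\ell^*_i$ happens to be the last clockwise edge of the polygon of $\zD^*_s$ containing $q$. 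So the crossings near $q$ record, deepest first, the syzygies $P_{i_m},\dots,P_{i_0}$ on that side, with the grading climbing to $0$ at the $t(\ell_i)$ crossing and descending again towards the other endpoint. Once this ordering is corrected, the local reading, the matching with the algebraic resolution, and the handling of punctured endpoints all go through as you describe.
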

\begin{proof}
We start with an algebraic construction of the projective resolution of the simple module $M_\ell$, and then go back to the geometric model.
Note that the projective resolution of a simple module over a monomial algebra is well-known, see for example in \cite{GHZ85}. Here we explicitly construct the resolution for the convenience of the reader.

Denote by $P_{\ell^*}$ the projective module associated to the vertex $\ell^*$ of $Q(\zD^*_s)$.
Since $A(\zD_s^*)$ is a gentle algebra, the radical $rad(P_{\ell^*})$ of $P_{\ell^*}$ is a direct sum of at most two uniserial submodules $U_{i_0}$ and $U_{j_0}$, where $U_{i_0}$ and $U_{j_0}$ maybe zero. We assume both of them are non-zero in the following, while the proof of other cases are similar. So we have a short exact sequence
\[0\longrightarrow U_{i_0}\oplus U_{j_0} \s{
%\begin{pmatrix}
%        \begin{smallmatrix}
%            a_0 \\
%            b_0
%        \end{smallmatrix}
%    \end{pmatrix}
}\longrightarrow P_{\ell^*} \s{\pi_0}\longrightarrow M_{\ell}\longrightarrow 0,\]
where $\pi_0$ is the canonical projection.

Let $\ell^*_{i_0}$ be the (unique) vertex of $Q(\zD_s^*)$ corresponding to the top of $U_{i_0}$, which is a simple module since $U_{i_0}$ is uniserial. Then the canonical injection from $U_{i_0}$ to $P_{\ell^*}$ is given by an arrow $\aaa_0$ from $\ell^*$ to $\ell^*_{i_0}$. Denote by $P_{i_0}$ the projective module associated to $\ell^*_{i_0}$. Then $\aaa_0$ induces a map from $P_{i_0}$ to $P_{\ell^*}$, which is in fact the composition of the canonical projection from $P_{i_0}$ to $U_{i_0}$ and the canonical injection from $U_{i_0}$ to $P_{\ell^*}$. We still use $\aaa_0$ to denote this map. On the other hand, by using a similar discussion for $U_{j_0}$, we have a projective module $P_{j_0}$ as well as a canonical map $\bbb_0$ from $P_{j_0}$ to $P_{\ell^*}$. Then we have the first two terms in the projective resolution of $M_\ell$, which corresponds to the homotopy string $\aaa_0^{-1}\bbb_0$ in $A(\zD_s^*)$:
$$\xymatrix@C=0.7cm{P_{i_0}\oplus P_{j_0} \ar[r]^{ \begin{pmatrix}
        \begin{smallmatrix}
            \aaa_0 \\
            \bbb_0
        \end{smallmatrix}
    \end{pmatrix}} & P_{\ell^*}\ar[r]^{\pi_0} & M_\ell\ar[r] & 0}.$$

Note that again by the second condition in the definition of a gentle algebra, we have $rad(P_{i_0})=U_{i_1}\oplus rad(U_{i_0})$ for some uniserial module $U_{i_1}$. If $U_{i_1}=0$, then $rad(P_{i_0})=rad(U_{i_0})$ and thus $P_{i_0}=U_{i_0}$. In this case $\aaa_0$ is an injection and our construction is already done for $U_{i_0}$ side, and we continue the construction from $U_{j_0}$.

Now we assume $U_{i_1}$ is non-zero.
Let $\ell^*_{i_1}$ be the vertex corresponding to the top of $U_{i_1}$.
Similar to above discussion, there is an arrow $\aaa_1$ from $\ell^*_{i_0}$ to $\ell^*_{i_1}$ which gives rise to a canonical map $\aaa_1$ from $P_{i_1}$ to $P_{i_0}$.
Then we have $\aaa_0\aaa_1=0$, otherwise, $U_{i_1}$ is a submodule of $U_{i_0}$, which contradicts to the equality $rad(P_{i_0})=U_{i_1}\oplus rad(U_{i_0})$.

Without loss of generality, for $U_{j_0}$, we also have a submodule $U_{j_1}$ as well as arrows $\bbb_0$ and $\bbb_1$ which have similar property as for $U_{i_0}$.
Then we have a homotopy string $\aaa_1^{-1}\aaa_0^{-1}\bbb_0\bbb_1$ of $A(\zD_s^*)$ with $\aaa_0\aaa_1=\bbb_0\bbb_1=0$, which gives rise to the first three terms in the projective resolution of $M_\ell$:
$$\xymatrix@C=0.7cm{P_{i_1}\oplus P_{j_1} \ar[r]^{ \begin{pmatrix}
        \begin{smallmatrix}
            \aaa_1 & 0 \\
            0 & \bbb_1
        \end{smallmatrix}
    \end{pmatrix}} & P_{i_0}\oplus P_{j_0} \ar[r]^{ \begin{pmatrix}
        \begin{smallmatrix}
            \aaa_0 \\
            \bbb_0
        \end{smallmatrix}
    \end{pmatrix}} & P_{\ell^*}\ar[r]^{\pi_0} & M_\ell\ar[r] & 0}.$$

Continue this process, if it stops in finite steps, then we get a maximal homotopy string $\sigma=\aaa_m^{-1}\cdots \aaa_0^{-1}\bbb_0\cdots \bbb_n$ such that $\aaa_u\aaa_{u+1}=\bbb_v\bbb_{v+1}=0$ for any $0 \leq u \leq m-1, 0 \leq v \leq n-1$. Then $\sigma$ gives rise to a projective complex $\P(\sigma)$ which is exactly the projective resolution of $M_\ell$.
If the process does not stop in finitely many steps, then we get an infinite homotopy string $\sigma$, which is in fact eventually periodic by the structure of an infinite homotopy string given in subsection \ref{subsection: Derived categories of gentle algebras}.

We go back to the geometric model $(\cals,\calm,\zD_p^*)$.
Set $\P_\ell:=\P_{(\ell,f_0)}$. To prove that $\P_\ell$ is the projective resolution of $M_\ell$, we have to show that $\P_\ell=\P(\sigma)$.
To do this, it is enough to show the claim that
$\sigma$ is exactly the homotopy string $\sigma(\ell)$ associated to the complex $\P_\ell$,
noticing that then the gradings automatically coincide, since both $\P(\sigma)$ and $\P_\ell$ are projective resolutions of some modules.

Now we prove the claim, where we consider the case that one endpoint of $\ell$ is a marked point on the boundary and the other one is a puncture, see the picture in Figure \ref{figure:projective resolution of simple}.
The other cases can be proved similarly.
At first, we label the arrows $\aaa_u$ and $\bbb_v$ with $0 \leq u \leq m-1$ and $0 \leq v \leq n-1$, which appear in $\sigma$, as oriented intersections, that is, the black arrows in the picture.
Then we locally draw the projective coordinate $\zD^*_p$, i.e. the green arcs, which is obtained from $\zD_s^*$ by using the operator $t((-)^*)$, cf. Figure \ref{figure:simple to projective}.
In particular, the oriented intersections between the arcs in $\zD^*_p$ are the green arrows, where the labels inherit the ones in $\zD_s^*$.
Then it is clear from the picture that the homotopy string $\sigma(\ell)$ associated to $\ell$ is $\sigma$.

\begin{figure}[ht]
 \[\scalebox{1}{
\begin{tikzpicture}[>=stealth,scale=1.2]
\draw[red!50,thick] (-4,1)--(-2,2)--(0,1)--(0,1)--(2,2)--(4,1)--(4,-1)--(2,-2)--(0,-1)--(-1,-2)--(-3,-2)--(-4,-1);
\draw[thick](-4,1)--(-4,-1);
\draw[red!50,thick](0,1)--(0,-1);

\draw[very thick](-4,0)--(2,0);

\draw[dark-green,thick](-4,1)--(2,0);
\draw[dark-green,thick](-4,1)--(0,-2);
\draw[dark-green,thick](-4,1)--(-1.5,-2.5);
\draw[dark-green,thick,bend left](-4,1)to(-4,-2);

\draw[dark-green,thick](2,0)--(0.5,2.2);
\draw[dark-green,thick](2,0)--(3.5,2.2);
\draw[dark-green,thick](2,0)--(0.5,-2.2);
\draw[dark-green,thick](2,0)--(3.5,-2.2);
\draw[dark-green,thick](2,0)--(4.5,0);

\draw[dark-green,thick,bend left,->](1.8,-0.3)to(1.6,0.05);
\node [dark-green] at (1.4,-.2) {\tiny$\bbb_n$};
\draw[dark-green,thick,bend left,->](1.6,0.05)to(1.8,0.3);
\node [dark-green] at (1.4,.3) {\tiny$\bbb_0$};
\draw[thick,bend right,->,black!50](0,0.7)to(0.3,1.15);
\node[black!50] at (.5,.8) {\tiny$\bbb_0$};
\draw[thick,bend left,<-,black!50](0,-0.7)to(0.3,-1.15);
\node[black!50] at (.5,-.8) {\tiny$\bbb_n$};

\draw[thick,bend right,->,black!50](0,-0.7)to(-0.25,-1.25);
\node[black!50] at (-.4,-.9) {\tiny$\aaa_0$};
\draw[thick,bend right,->,black!50](-2.7,-2)to(-3.2,-1.8);
\node[black!50] at (-2.8,-1.65) {\tiny$\aaa_m$};
\draw[thick,bend left,dark-green,->](-2.7,0.8)to(-2.9,0.2);
\node [dark-green] at (-2.45,0.4) {\tiny$\aaa_0$};
\draw[thick,bend left,dark-green,->](-3.15,-.2)to(-3.55,-.4);
\node [dark-green] at (-3.2,-.6) {\tiny$\aaa_m$};

\node at (-1,-.2) {\tiny$\ell$};
\node [dark-green] at (-1,0.8) {\tiny$t(\ell)$};
\node [red!50] at (0.3,-.3) {\tiny$\ell^*$};
			    \draw[dotted,very thick]plot [smooth,tension=1] coordinates {(0,0) (.6,0.3) (2,1.2) (3,0) (1.8,-.8) (1,0) (1.5,0.7) (2,0.7)};

\node [red!50] at (-.1,-1.5) {\tiny$\ell^*_{i_0}$};
\node [dark-green] at (-1.4,-1.35) {\tiny$t(\ell_{i_0})$};
\node [red!50] at (1.3,2) {\tiny$\ell^*_{j_0}$};
\node [dark-green] at (0.35,1.7) {\tiny$t(\ell_{j_0})$};

\draw[thick,fill=white] (-4,0) circle (0.06);
\draw[thick,fill=white] (2,0) circle (0.06);
\draw[red,thick,fill=red] (-4,1) circle (0.06);
\draw[red,thick,fill=red] (-4,-1) circle (0.06);
\draw[red,thick,fill=red] (-2,2) circle (0.06);
\draw[red,thick,fill=red] (0,1) circle (0.06);
\draw[red,thick,fill=red] (0,1) circle (0.06);
\draw[red,thick,fill=red] (2,2) circle (0.06);
\draw[red,thick,fill=red] (4,1) circle (0.06);
\draw[red,thick,fill=red] (4,-1) circle (0.06);
\draw[red,thick,fill=red] (2,-2) circle (0.06);
\draw[red,thick,fill=red] (0,-1) circle (0.06);
\draw[red,thick,fill=red] (-1,-2) circle (0.06);
\draw[red,thick,fill=red] (-3,-2) circle (0.06);
\end{tikzpicture}}\]
        \begin{center}
			\caption{An arc $\ell$ in $\zD_s$ represents a simple module with respect to the geometric model $(\cals,\calm,\zD_s^*)$, as well the projective resolution of this simple module with respect to the geometric model $(\cals,\calm,\zD_p^*)$. Since one endpoint of $\ell$ is a puncture, to get $\P_\ell$, one should redraw $\ell$ locally in a way such that it wraps around the puncture infinitely many times in the clockwise direction, see the dotted line. The projective resolution of $M_\ell$ is given by the homotopy string $\sigma=\aaa_m^{-1}\cdots \aaa_0^{-1}(\bbb_0\cdots \bbb_n)^\infty$ (see the proof of Proposition \ref{proposition:object-simple}), which is exactly the homotopy string $\sigma(\ell)$ given by the arc $\ell$, with respect to $\zD^*_p$.}
\label{figure:projective resolution of simple}
\end{center}
\end{figure}
\end{proof}

\begin{proposition}\label{proposition:object-any arc}
%Let $\zD_s^*$ be a simple coordinate of a marked surface $(\cals,\calm)$.
For any zigzag $\bpoint$-arc $\za$ on $(\cals,\calm,\zD_s^*)$, $\P_{\za}$ is the projective resolution of $M_{\za}$.
\end{proposition}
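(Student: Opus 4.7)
The plan is to extend the argument of Proposition \ref{proposition:object-simple}: I will construct the projective resolution of $M_\za$ combinatorially from the string $\omega(\za)$, and then match it with the graded homotopy string read off $\za$ with respect to $\zD_p^*$. The starting point is the well-known fact that over a gentle algebra each syzygy of a string module is again a direct sum of string modules, so the whole resolution is encoded by a single (possibly infinite) homotopy string, which is exactly what a single zigzag arc should produce geometrically.

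First, I will analyze $\omega(\za)=\omega_1\cdots\omega_n$. Its maximal direct/inverse substrings are separated by \emph{peaks} (an inverse letter followed by a direct one) and \emph{valleys} (a direct letter followed by an inverse one). The projective cover of $M_\za$ is $P=\bigoplus_{p} P_{v_p}$, with one summand for each peak $v_p$ together with the endpoints of $\omega(\za)$ that behave as peaks. By Theorem \ref{theorem:projectives from simple coordinates} each $P_{v_p}$ is the string module of the zigzag arc $t(\ell_{v_p})$, and by the local translation rule depicted in Figure \ref{figure:simple to projective}, these arcs fit together to form precisely the $0$-th term of $\P_\za$: the peaks of $\omega(\za)$ with respect to $\zD^*_s$ are exactly the intersection points of $\za$ with $\zD^*_p$ where the canonical grading $f_0$ attains its maximum value $0$.

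Next, I will compute the syzygy $\Omega M_\za$ via the standard hook-and-cohook procedure for string modules over a gentle algebra (cf.\ \cite{BR87,CPS19}): $\Omega M_\za$ splits as a direct sum of string modules, one per peak of $\omega(\za)$, each obtained by extending the two maximal uniserial pieces of $rad(P_{v_p})$ by their unique hook/cohook under the gentle relations. A direct geometric inspection, using Figure \ref{figure:simple to projective} again, shows that these new strings correspond exactly to the segments of $\za$ sitting one level below the peaks with respect to $\zD^*_p$, i.e.\ to the intersections $q$ with $f_0(q)=-1$. Iterating produces, term by term, the entire homotopy string defining $\P_\za$, with the base step (arcs of $\zD_s$, i.e.\ simple modules) supplied by Proposition \ref{proposition:object-simple}.

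The main obstacle I expect is the bookkeeping near a puncture. When $\za$ has an endpoint in $\calp_{\bpoint}$ the iteration never terminates on that side and $\P_\za$ is an unbounded, eventually periodic complex. One has to verify that each clockwise wrap of $\za$ around the puncture used to define $\P_\za$ accounts for exactly one period of the hook/cohook recursion, and that the resulting degree shifts are compatible with $f_0$. Once this periodicity is checked---which follows from the description of eventually periodic infinite homotopy strings recalled in Subsection \ref{subsection: Derived categories of gentle algebras}---the algebraic and geometric constructions of the resolution agree term by term, completing the proof.
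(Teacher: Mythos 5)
Your approach is genuinely different from the paper's. The paper proves the proposition by induction on the length of $\omega(\za)$: it peels off the first letter of $\omega(\za)$ to obtain a shorter zigzag arc $\zb$ together with an arc $\ell\in\zD_s$, observes that $\za$ is the smoothing of $\zb$ and $\ell$ at their common endpoint $q$, lifts the resulting short exact sequence of modules to a distinguished triangle in $\kaa$, and then invokes the mapping-cone description of \cite[Theorem 4.1]{OPS18} (the cone of the oriented intersection at $q$ is the smoothed arc) to identify the two triangles. Induction reduces everything to Proposition \ref{proposition:object-simple}, so no syzygy combinatorics beyond the simple case is needed. Your proposal instead rebuilds the algebraic minimal projective resolution via hooks/cohooks and matches it against $\P_\za$ term by term --- in effect reproving Theorem \ref{thm:main-projective-res} directly, whereas the paper deduces that theorem \emph{from} this proposition. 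That ordering is legitimate and avoids any appeal to the mapping-cone formula, at the price of more bookkeeping.

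Two concrete gaps would have to be closed. First, the claim that $\Omega M_\za$ decomposes with ``one summand per peak of $\omega(\za)$, each obtained by extending the two maximal uniserial pieces of $\mathrm{rad}(P_{v_p})$'' is incorrect: the summands of the first syzygy are indexed by the internal \emph{valleys} of $\omega(\za)$ and (possibly) the two endpoints, and the summand at an internal valley is a gluing of overhangs from two \emph{different} peak projectives, not an extension of a single $\mathrm{rad}(P_{v_p})$; moreover, from the second syzygy onward the number of summands drops to at most two, as Theorem \ref{thm:main-projective-res} records. Second, the statements that the peaks sit exactly at grading $f_0=0$ and that each hook/cohook step lowers the $\zD^*_p$-grading by exactly one are precisely the content of the proposition: ``a direct geometric inspection'' would have to be carried out, applying the translation rule of Figure \ref{figure:simple to projective} at every interior crossing of $\za$ with $\zD^*_p$, not just near the peaks, and then checking the periodicity at a punctured endpoint. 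Until those verifications are supplied, the plan is a sound strategy rather than a proof; the paper's smoothing argument is, by comparison, shorter because it treats boundary endpoints and punctures uniformly and never needs to count syzygy summands.
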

\begin{proof}
Denote by $\sigma(\za)$ the string of $A(\zD^*_s)$ associated to $\za$. We use induction on the length $n$ of $\sigma(\za)$ to prove the statement.
By Proposition \ref{proposition:object-simple}, the statement holds for the case when $n=0$.

Now assume that the statement holds for $\za$ with the length of $\sigma(\za)$ smaller than $n$. Suppose $\sigma(\za)$ starts at a vertex $\ell^*$ of $Q(\zD^*_s)$. We construct a new string obtained from $\sigma(\za)$ by deleting the first arrow $\aaa$ or inverse arrow $\aaa^{-1}$ starting at $\ell^*$. Denote by $\zb$ the zigzag arc associated to the new string, which we denote by $\sigma(\zb)$. Let $q$ be the intersection of $\ell$ and $\zb$, which is a boundary marked point or a puncture. Note that in both cases, $\za$ is obtained by smoothing $\zb$ and $\ell$ at $q$, see Figure \ref{figure:projective resolution of arc} for an example when $q$ is a puncture and $\sigma(\za)$ starts at an inverse arrow $\aaa^{-1}$.

\begin{figure}[ht]
 \[\scalebox{1}{
\begin{tikzpicture}[>=stealth,scale=1]
\draw[red!50,thick] (-4,1)--(-2,2)--(0,1)--(0,1)--(2,2)--(4,1)--(4,-1)--(0,-1)--(-1,-2)--(-3,-2)--(-4,-1);
\draw[thick,black](-4,1)--(-4,-1);
\draw[red!50,thick](0,1)--(0,-1);

\draw[very thick](-4,0)--(2,0);

\node at (1,.2) {\tiny$\ell$};
\node at (2.2,0.1) {\tiny$q$};
\node at (1.6,-.6) {\tiny$\za$};
\node at (3,-.5) {\tiny$\zb$};
\draw[very thick]plot [smooth,tension=1] coordinates {(-4,0) (1,-.6) (4.5,-2)};
\draw[very thick](2,0)--(4.5,-1.9);

\draw[thick,bend right,->](.3,-1)to(0,-.8);
\node [] at (.3,-.7) {\tiny$\aaa$};
\node [red] at (0.2,.3) {\tiny$\ell^*$};
\draw[thick,fill=white] (-4,0) circle (0.06);
\draw[thick,fill=white] (2,0) circle (0.06);
\draw[red,thick,fill=red] (-4,1) circle (0.06);
\draw[red,thick,fill=red] (-4,-1) circle (0.06);
\draw[red,thick,fill=red] (-2,2) circle (0.06);
\draw[red,thick,fill=red] (0,1) circle (0.06);
\draw[red,thick,fill=red] (0,1) circle (0.06);
\draw[red,thick,fill=red] (2,2) circle (0.06);
\draw[red,thick,fill=red] (4,1) circle (0.06);
\draw[red,thick,fill=red] (4,-1) circle (0.06);
\draw[red,thick,fill=red] (0,-1) circle (0.06);
\draw[red,thick,fill=red] (-1,-2) circle (0.06);
\draw[red,thick,fill=red] (-3,-2) circle (0.06);
\end{tikzpicture}}\]
        \begin{center}
			\caption{An arc $\za$ is the smoothing of $\zb$ and $\ell$ at the intersection $q$, where the string $\sigma(\za)$ associated to $\za$ starts at $\aaa^{-1}$, and the string $\sigma(\zb)$ associated to $\zb$ is obtained from $\sigma(\za)$ by deleting $\aaa^{-1}$.}
\label{figure:projective resolution of arc}
\end{center}
\end{figure}
Such smoothing gives rise to a short exact sequence
\[0\longrightarrow M_\zb \longrightarrow M_\za \longrightarrow M_\ell \longrightarrow 0,\]
if $\aaa$ starts at $\ell^*$, or a short exact sequence
\[0\longrightarrow M_\ell \longrightarrow M_\za \longrightarrow M_\zb \longrightarrow 0,\]
if $\aaa$ ends at $\ell^*$. Since the length of $\sigma(\zb)$ is $n-1$, by the assumption, $\P_\zb$ is the projective resolution of $M_\zb$. On the other hand, $\P_\ell$ is the projective resolution of $M_\ell$. Denote by $\P$ the projective resolution of $\za$, then we have a distinguished triangle
\[\P_\zb \longrightarrow \P \longrightarrow \P_\ell \longrightarrow \P_\zb[1]\]
or
\[\P_\ell \longrightarrow \P \longrightarrow \P_\zb \longrightarrow \P_\ell[1]\]
in the derived category $\kaa$ of $A(\zD_s^*)$, which are respectively the lifts of above two short exact sequences in the module category.

On the other hand, it follows from the description of the mapping cones in $\kaa$ stated in \cite[Theorem 4.1]{OPS18} that the oriented intersection at $q$ gives rise to above distinguished triangles, and the $\bpoint$-arc associated to $\P$ is obtained from smoothing $\zb$ and $\ell$ at $q$, which is exactly the arc $\za$. Thus $\P_\za=\P$, which is the projective resolution of $M_\za$.
\end{proof}

\begin{proposition}\label{proposition:object-closed curve}
%Let $\zD_s^*$ be a simple coordinate of a marked surface $(\cals,\calm)$.
Any zigzag closed curve $\za$ on $(\cals,\calm,\zD^*_s)$ is gradable with respect to $\zD^*_p$, and $\P_{(\za,\lambda,m)}$ is the projective resolution of $M_{(\za,\lambda,m)}$ for any $\lambda\in k^*$ and $m\in \mathbb{N}$.
\end{proposition}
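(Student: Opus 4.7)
The plan has three parts, and it parallels the inductive structure of Propositions \ref{proposition:object-simple}--\ref{proposition:object-any arc}, adapted to the cyclic nature of the closed curve.

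\textbf{Step 1: Gradability.} Since $\za$ is zigzag, Construction~1 produces a word $\omega(\za)=\omega_1\cdots\omega_n$ which is a band of $A(\zD^*_s)$; in particular it contains equal numbers of direct and inverse arrows. Under the canonical isomorphism $A(\zD^*_s)\cong A(\zD^*_p)$ of Lemma \ref{lemma:iso of algebras}, each $\omega_i$ still corresponds to a corner in a polygon of $\zD^*_p$ traversed by $\za$, and inspection of Figure \ref{figure:simple to projective} shows that direct (resp.\ inverse) arrows correspond to the unique $\bpoint$-point of that polygon lying on the right (resp.\ left) of $\za$. Summing the $\pm 1$ contributions of these corners around one full traversal therefore vanishes, so the canonical grading $f_0$ of Definition \ref{definition:canonical complex} is well defined on $\za$.

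\textbf{Step 2: The quasi-simple case $m=1$.} Pick an arc $\ell\in\zD_s$ meeting $\za$ transversally at an interior point $q$; such an $\ell$ exists because $\zD_s$ cuts $\cals$ into polygons each containing a single $\rpoint$-point, so the non-contractible closed curve $\za$ cannot be homotoped into any of them. Smoothing $\za$ and $\ell$ at $q$ produces a zigzag arc $\zb$ whose associated string is the band $\omega(\za)$ cut open at~$q$. Using the rules of $A(\zD^*_s)$ and the zigzag hypothesis, one then checks that for every $\lambda\in k^*$ there is a short exact sequence
\begin{equation*}
0 \;\longrightarrow\; M_\ell \;\longrightarrow\; M_{(\za,\lambda,1)} \;\longrightarrow\; M_\zb \;\longrightarrow\; 0,
\end{equation*}
obtained by taking the simple quotient at the cut vertex (the scalar $\lambda$ recording the cyclic Jordan structure of $M_{(\za,\lambda,1)}$). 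By Propositions \ref{proposition:object-simple} and \ref{proposition:object-any arc}, the outer terms are represented by $\P_\ell$ and $\P_\zb$, and the sequence lifts to a distinguished triangle in $\kaa$. Appealing to the mapping-cone dictionary of \cite[Theorem 4.1]{OPS18}, the cone is represented geometrically by the curve obtained from smoothing $\ell$ and $\zb$ at $q$, which is exactly $\za$, and a direct computation identifies the resulting homotopy-band eigenvalue with the prescribed $\lambda$. Hence $\P_{(\za,\lambda,1)}$ is the minimal projective resolution of $M_{(\za,\lambda,1)}$.

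\textbf{Step 3: General $m$, and the main obstacle.} For arbitrary $m\ge 2$, induct on $m$ using the canonical short exact sequence
\begin{equation*}
0 \to M_{(\za,\lambda,m-1)} \to M_{(\za,\lambda,m)} \to M_{(\za,\lambda,1)} \to 0,
\end{equation*}
which lifts to a triangle in $\kaa$ whose cone is the complex built from the block identities $I_m$ and the closing map $B(\lambda)_m$ of Section \ref{subsection: Derived categories of gentle algebras}; this is by definition $\P_{(\za,\lambda,m)}$. The technical heart of the argument is the case $m=1$: one must verify that the scalar appearing in the connecting map of the smoothing triangle ranges over all of $k^*$ as one varies the short exact sequence, and that this scalar is precisely the eigenvalue of the Jordan block labelling $\P_{(\za,\lambda,1)}$. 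Matching these two parameters --- the geometric cone scalar on one side and the algebraic Jordan eigenvalue on the other --- is where I expect the main work to lie, since the existing cone formalism of \cite{OPS18} only records the underlying curve, not the scalar twist that distinguishes band objects with different $\lambda$.
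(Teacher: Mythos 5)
Your approach is genuinely different from the paper's, and in outline it is sensible (smooth a crossing to produce a short exact sequence, lift it to a triangle, identify the cone geometrically). The paper instead takes a self-intersecting arc $\zb$ with $\omega(\zb)=\omega\aaa\omega$ where $\omega(\za)=\omega\aaa$, uses the canonical sequence $0\to M_\zb\to M_{(\za,\lambda,3)}\oplus M_\zg\to M_\zb\to 0$ (with $m=3$ forced by dimension count) together with the mapping-cone description at the self-intersection, and then finishes by citing \cite[Proposition 1(2)]{HS05} to transfer the conclusion to all $(\lambda,m)$. Your Step 1 (gradability) is correct and is in fact a cleaner justification of a point the paper states without argument, and your choice to work with an intersection between $\za$ and a single simple arc $\ell\in\zD_s$ rather than a self-intersection is a legitimate alternative; but the proof as written does not go through, for two reasons.

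First, the short exact sequence in Step 2 cannot hold as stated. If $\omega(\za)$ is a band of length $n$, then $\dim M_{(\za,\lambda,1)}=n$; if $\zb$ has the cut-open band as its string then $\dim M_\zb=n+1$; and $\dim M_\ell=1$. So the claimed sequence $0\to M_\ell\to M_{(\za,\lambda,1)}\to M_\zb\to 0$ would force $n=n+2$. There is also an internal inconsistency: you describe $M_\ell$ as the "simple quotient at the cut vertex," but in the displayed sequence $M_\ell$ is a \emph{sub}module. The dimensionally consistent sequences available at this crossing are $0\to M_{(\za,\lambda,1)}\to M_\zb\to M_\ell\to 0$ or $0\to M_\ell\to M_\zb\to M_{(\za,\lambda,1)}\to 0$; in neither is the band module the middle term, so the triangle/cone you subsequently need has a different shape from the one you invoke, and the argument has to be reorganized around a rotated triangle.

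Second, you correctly flag the parameter-matching obstruction (the scalar in the smoothing triangle versus the Jordan eigenvalue $\lambda$), but you leave it open, whereas the paper actually resolves it: having shown for one fixed band module that the projective resolution is a homotopy band object supported on the curve $\za$, it invokes \cite[Proposition 1(2)]{HS05} to conclude that the resolutions of \emph{all} $M_{(\za,\lambda,m)}$ arise from the same homotopy band, and then the bijectivity of the two $(\lambda,m)$-parameterizations pins down the correspondence. Your Step 3 induction $0\to M_{(\za,\lambda,m-1)}\to M_{(\za,\lambda,m)}\to M_{(\za,\lambda,1)}\to 0$ is fine for the $m$-dependence but does not touch the $\lambda$ question; without the \cite{HS05}-style argument (or an explicit computation of the cone scalar) the base case $m=1$ remains unproved.
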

\begin{proof}

Since for different choices of $\lambda\in k^*$ and $m\in \mathbb{N}$, the projective resolutions of $M_{(\za,\lambda,m)}$ arises from the same homotopy band, see for example \cite[Proposition 1 (2)]{HS05}.
Thus we will fix a $\lambda\in k^*$ and assume that $m=1$.

Let $\sigma$ be the band associated to $\za$, then $\sigma=\sigma_1\sigma^{-1}_2\cdots\sigma_{2n-1}\sigma^{-1}_{2n}$, where $\sigma_i, 1\leqslant i \leqslant 2n$, are direct strings. 
Note that $\sigma$ alternates between direct and inverse homotopy letters.
Thus $\sigma$ can also be viewed as a homotopy band,  which is
denoted by $\overline{\sigma}$ to avoid ambiguity, and it gives rise to the projective resolution  $\P$ of $M_{(\za,\lambda,1)}$, see
\cite[Corollary 2.12]{CPS21}.
%\begin{center}
%	\begin{tikzpicture}[scale=.5,xscale=3,yscale=3,ar/.style={->,thick}]
%		\draw(-.7,0)node(v2){$\bigoplus\limits_{i=1}^{n} P_{s(\sigma_{2i-1})}$}
%		(1,0)node(v1){$\bigoplus\limits_{i=1}^{n} P_{t(\sigma_{2i})}$,};
%		\draw[ar](v2)edge(v1);
%		\draw[black]
%		($(v2)!.5!(v1)$) node[above] {\tiny$d$};
%	\end{tikzpicture}
%\end{center}
%where we denote by $t(\sigma_i)$ and $s(\sigma_i)$ the target and the source of $\sigma_i$ respectively.
To prove that $\za$ is gradable and $\P_{(\za,\lambda,1)}=\P$, it is enough to prove that $\overline{\sigma}$ is the homotopy band associated to $\za$.
Assume that $\sigma_i=\aaa_{i,1}\cdots\aaa_{i,r_i}$, where $\aaa_{i,j}$ is an arrow from $\ell^*_{i,j-1}$ to $\ell^*_{i,j}$.
Since $\sigma$ is the band associated to $\za$, $\za$ successively intersects the arcs $\ell^*_{i,j}$, see the local configuration in Figure \ref{figure:band}, recalling the construction of a band from a closed curve given in {\bf Construction 1}. Therefore $\za$ successively intersects two arcs $t(\ell_{i,0})$ and $t(\ell_{i,r_i})$ in $\zD^*_p$, recall that $\zD^*_p$ is obtained from $\zD^*_s$ by using the operator $t(-)^*$, see the pictures in Figure \ref{figure:simple to projective}.
Finally, by the construction of the homotopy band associated to a closed curve given in the beginning of subsection \ref{subsection: derived categories and derived categories}, the homotopy band of $\za$ is exactly $\overline{\sigma}$, which consists of stings $\sigma_i:t(\ell_{i,0})\rightarrow t(\ell_{i,r_i})$ or the inverse strings $\sigma^{-1}_i:t(\ell_{i,r_i})\rightarrow t(\ell_{i,0})$.

\begin{figure}
	\[\scalebox{1}{
		\begin{tikzpicture}[>=stealth,scale=0.8]
\draw[thick](-2,0)--(2,0);
	
\draw[thick,red!50] (-6,0)--(-7,3.8)--(0,0)--(7,3.8)--(6,0);
			\draw[thick,red!50] (-4,5)--(0,0)--(4,5);
			\draw[thick,dashed,red!50] (0,0)--(0,6);

			\draw[very thick,dark-green]plot [smooth,tension=1] coordinates {(5.5,4.5) (3.5,4) (2,5.5)};
			\draw[very thick,dark-green,dashed]plot [smooth,tension=1] coordinates {(2,5.5) (0,4.5) (-2,5.5)};
			\draw[very thick,dark-green]plot [smooth,tension=1] coordinates {(-5.5,4.5) (-3.5,4) (-2,5.5)};
			
			\node[] at (6,1.7) {\tiny$\za$} ;
			\node[red!50] at (-7,1.2) {\tiny$\ell_{i+1,0}^*$} ;		
			\node[red!50] at (-3,1.2) {\tiny$\ell_{i,r_{i-1}}^*$} ;
			\node[dark-green] at (4.2,2.8) {\tiny$t(\ell_{i,0})$};
%			\node[red!50] at (-1.6,2.8) {\tiny$\ell_{i,r_{i-1}-1}$};
%			\node[red!50] at (1.9,2.8) {\tiny$\ell_{i,1}$};
			\node [dark-green] at (-4.2,2.8){\tiny$t(\ell_{i,r_{i}})$};
			\node [red!50] at (3,1.2) {\tiny$\ell_{i,0}^*$};
			\node [red!50] at (7.3,1.2) {\tiny$\ell_{i-1,r_{i-1}}^*$};

\draw[dark-green,thick,bend left,->](5.3,3.8)to(4.93,4.2);
\node [dark-green] at (4.7,3.6) {\tiny$\aaa_{i,1}$};
\draw[dark-green,thick,bend left,->](-4.93,4.2)to(-5.3,3.8);
\node [dark-green] at (-4.7,3.6) {\tiny$\aaa_{i,r_{i}}$};
\draw[red!50,thick,bend right,->](6.5,3.5)to(6.8,3.1);
\node [red!50] at (6,2.7) {\tiny$\aaa_{i-1,r_{i-1}}$};
\draw[red!50,thick,bend right,->](-6.8,3.1)to(-6.5,3.5);
\node [red!50] at (-6,2.8) {\tiny$\aaa_{i+1,1}$};

\draw[dark-green,very thick](4.5,1)--(5.5,4.5);	
\draw[dark-green,very thick](-4.5,1)--(-5.5,4.5);		
\draw[very thick](-8,2)--(8,2);	

\draw[red!50,thick,bend right,->](-.6,.75)to(-.86,.45);
\node [red!50] at (-1.05,.83) {\tiny$\aaa_{i,r_i}$};										
\draw[red!50,thick,bend right,->](.86,.44)to(.55,.65);
\node [red!50] at (1.07,.8) {\tiny$\aaa_{i,1}$};

\draw[red,thick,fill=red] (0,0) circle (0.08);
\draw[red,thick,fill=red] (6,0) circle (0.08);
\draw[red,thick,fill=red] (-6,0) circle (0.08);
%\draw[thick,fill=white] (-3,0) circle (0.08);
%\draw[thick,fill=white] (3,0) circle (0.08);
\draw[red,thick,fill=red] (4,5) circle (0.08);
\draw[red,thick,fill=red] (-4,5) circle (0.08);
\draw[red,thick,fill=red] (0,6) circle (0.08);
\draw[red,thick,fill=red] (-7,3.8) circle (0.08);
\draw[red,thick,fill=red] (7,3.8) circle (0.08);

\draw[red,thick,fill=red] (2,5.5) circle (0.08);
\draw[red,thick,fill=red] (-2,5.5) circle (0.08);
\draw[red,thick,fill=red] (5.5,4.5) circle (0.08);
\draw[red,thick,fill=red] (-5.5,4.5) circle (0.08);
\draw[red,thick,fill=red] (4.5,1) circle (0.08);
\draw[red,thick,fill=red] (-4.5,1) circle (0.08);
	\end{tikzpicture}}\]
	\begin{center}
		\caption{The homotopy band associated to a closed curve $\za$ is $\overline{\sigma}$, which consists of stings $\sigma_i=\aaa_{i,1}\cdots\aaa_{i,r_i}:t(\ell_{i,0})\rightarrow t(\ell_{i,r_i})$ or the inverse strings $\sigma^{-1}_i$.}\label{figure:band}
	\end{center}
\end{figure}

\end{proof}
%\begin{figure}[ht]
% \[\scalebox{1}{
%\begin{tikzpicture}[>=stealth,scale=1]
%
%\draw[red!50,thick](0,0)--(-1.5,3);
%\draw[red!50,thick](0,0)--(1.5,3);
%\draw[thick,bend right,->](0.2,0.35)to(-0.2,0.35);
%\node [] at (0,.6) {\tiny$\aaa$};
%\draw[thick,blue] (0,0) circle (1);
%  \node [blue] at (.8,0) {\tiny$\za$};
% \draw[thick]plot [smooth,tension=1] coordinates {(0,3) (-1.5,0) (0,-1.5) (1.3,0) (0,1.5) (-2,0) (0,-2) (2,0) (0,3)};
%   \node [] at (1.6,0) {\tiny$\zb$};
%\draw[thick,purple]plot [smooth,tension=1] coordinates {(0,3) (-2.5,0) (0,-2.5) (2.5,0) (0,3)};
%  \node [purple] at (2.8,0) {\tiny$\zg$};
%  \node [] at (-1.4,1.4) {\tiny$q$};
%  \node [] at (-1.17,1.2) {\tiny$\bullet$};
%  \draw[thick,fill=white] (0,3) circle (0.06);
%\draw[red,thick,fill=red] (0,0) circle (0.06);
%\end{tikzpicture}}\]
%        \begin{center}
%			\caption{For the zigzag curves $\zg$, $\za$ and $\zb$, the strings associated to them are respectively $\omega$, $\omega \aaa$ and $\omega \aaa \omega$. After smoothing $\zb$ at the canonical intersection $q$, we obtain $\za$ and $\zg$. The marked $\bpoint$-point may on the boundary or in the interior (i.e. a puncture).}
%\label{figure:object-band}
%\end{center}
%\end{figure}

\emph{Proof of Theorem \ref{theorem:object}:} This follows from Propositions \ref{proposition:object-any arc} and \ref{proposition:object-closed curve}.

\subsection{Morphisms and extensions as intersections}\label{subsection:morph}

In this subsection, we interpret the intersections of zigzag curves on the marked surface as the morphisms and extensions in the module category of the associated gentle algebra. We will fix $(\cals,\calm,\zD^*_s)$ to be a marked surface with a simple coordinate, and let $A$ be the associated gentle algebra. When we say zigzag curves we mean the zigzag curves on $(\cals,\calm)$ with respect to $\zD^*_s$.

For a zigzag arc or a zigzag closed curve $\za$, we still use $f_0$ to denote the \emph{canonical grading} such that $\P_{(\za,f_0)}$ or $\P_{(\za,f_0,\lambda,m)}$ is the projective resolution of $M_\za$ or $M_{(\za,\lambda,m)}$ respectively.
Let's start with the definition of the weight for an oriented intersection at boundary.

\begin{definition}\label{definition: weight}
Let $\za$ be a zigzag arc with an endpoint $q$ in $\calm_{\bpoint}$, where $q$ belongs to a polygon $\bbp=\{\ell^*_1,\ell^*_2,\cdots,\ell^*_n\}$ of $\zD^*_s$ with the arcs labeled clockwise, see Figure \ref{figure:weight and co-weight}.
Assume that starting from $q$, $\ell^*_t$ is the first arc that $\za$ intersects, for some $1\leq t \leq n$. We call $n-t$ the \emph{weight} of $\za$ at $q$, which is denoted by $w_q(\za)$, and we call $t-1$ the \emph{co-weight} of $\za$ at $q$, which is denoted by $cw_q(\za)$.
\begin{figure}
	\begin{center}
	\begin{tikzpicture}[>=stealth,scale=1]
				\draw[red!50,thick] (1,0)--(3,1)--(3,3)--(1,4);
				\draw[red!50,thick,dashed] (-1,4)--(1,4);
				\draw[red!50,thick] (-1,0)--(-3,1)--(-3,3)--(-1,4);

				\draw[very thick] (-2.5,4)--(0,0)--(2.5,4);
				\draw[thick] (-2,0)--(2,0);
				
				\draw[thick,fill=white] (0,0) circle (0.06);
				\draw (0,-.3) node {\tiny$q$};
				\node[red!50] at (-1.5,3.5) {\tiny$\ell^*_{t}$};
				\node[red!50] at (1.5,3.5) {\tiny$\ell^*_{t+\omega}$};
				\node at (-1.5,2) {\tiny$\za$};
				\node at (1.5,2) {\tiny$\zb$};
				\node[red!50] at (-2.5,1) {\tiny$\ell^*_{1}$};
				\node[red!50] at (2.5,.5) {\tiny$\ell^*_{n}$};
				\node at (0,3) {\tiny$\bbp$};
\draw[thick](0,0)to(-3.5,.8);
\node [] at (-1.2,.55) {\tiny$t^{-1}(\ell_1^*)$};	

\draw[thick,bend left,->](-.2,.3)to(.2,.3);
\node [] at (0,.6) {\tiny$\aaa$};	
\draw[red,thick,fill=red] (1,0) circle (0.06);
\draw[red,thick,fill=red] (-1,0) circle (0.06);
				\draw[red,thick,fill=red] (3,1) circle (0.06);
				\draw[red,thick,fill=red] (3,3) circle (0.06);
				\draw[red,thick,fill=red] (1,4) circle (0.06);
				\draw[red,thick,fill=red] (1,0) circle (0.06);
\draw[red,thick,fill=red] (-3,1) circle (0.06);
\draw[red,thick,fill=red] (-3,3) circle (0.06);
\draw[red,thick,fill=red] (-1,4) circle (0.06);
\end{tikzpicture}
\end{center}
\begin{center}
			\caption{For a zigzag arc $\za$ with endpoint $q$ which intersects $\ell^*_t$, the weight  $w_q(\za)$ of $\za$ at $q$ equals $n-t$. The weight $\omega(\aaa)$ of an oriented intersection $\aaa$ from $\za$ to $\zb$ is defined as $w_q(\za)-w_q(\zb)$, which equals $\omega$.}\label{figure:weight and co-weight}
\end{center}
\end{figure}
\end{definition}

\begin{definition}\label{definition:weighted intersections1}
	Let $\za$ and $\zb$ be two zigzag arcs which share a common endpoint $q$ in $\calm_{\bpoint}$. A \emph{weighted-oriented-intersection} from $\za$ to $\zb$ is an oriented intersection $\aaa$ from $\za$ to $\zb$ which arises from $q$, with \emph{weight} $w(\aaa)=w_q(\za)-w_q(\zb)$.
\end{definition}

Note that by Definition \ref{definition:oriented intersections}, there is an oriented intersection $\aaa$ from $\za$ to $\zb$ implies that $\zb$ follows $\za$ clockwise at $q$, cf. Figure \ref{figure:weight and co-weight}, and thus $w_q(\za)\geq w_q(\zb)$ by our convention used in Definition \ref{definition: weight}.
Therefore the weight of an oriented intersection arising from a boundary intersection is always non-negative.

The following proposition explicitly describes the minimal projective resolution $\P_\za$ for any zigzag arc $\za$ with endpoints on the boundary. In the following we write the complex as a line, where the maps between the projective modules arise from the associated arrows in the quiver. After putting the projective modules with the same degree together, we get the final projective resolution. Note that this result can be viewed as a geometric explanation of \cite[Thoerem 2.8, Corollary 2,12]{CPS21}, where the minimal projective resolution of any indecomposable module over a gentle algebra is given, by using combinatorics of strings.
We also mention that in \cite{LGH22}, the authors also use the surface model to construct the projective resolution of a module on a gentle algebra, where they use the geometric model established in \cite{BC21}. 
\begin{proposition}\label{prop:proj-res}
Let $\za$ be a zigzag $\bpoint$-arc with endpoint $p$ and $q$ on the boundary. 	
Let $\bbp=\{\ell^*_1,\ell^*_2,\cdots,\ell^*_n\}$ and $\bbp'=\{\iota^*_1,\iota^*_2,\cdots,\iota^*_m\}$ be the polygons of $\zD^*_s$ which contains $p$ and $q$ respectively.
We denote by $\sigma=\sigma_1\sigma_2\cdots\sigma_r$ the string associated to $\za$, and by $\nu_i^*$ the endpoint of $\sigma_i$ for each $1\leqslant i \leqslant r$. 

(1) If $\sigma_1$ is direct and $\sigma_r$ is inverse, then $\P_\za$ is given by

\begin{center}
\begin{tikzpicture}[scale=.5,xscale=3,yscale=2,ar/.style={->,thick}]
		\draw(-6.1,0)node(v1){$P_{t(\ell_n)}$}
		(-5,0)node(v2){$\cdots$}
		(-3.7,0)node(v3){$P_{t(\ell_{\omega_p(\za)})}$}
		(-2.3,0)node(v4){$P_{t(\nu_1)}$}
		(-1,0)node(v5){$P_{t(\nu_2)}$}
		(.3,0)node(v6){$P_{t(\nu_3)}$}
		(1.5,0)node(v7){$\cdots$};		
		\draw[ar](v1)edge(v2)
		(v2)edge(v3)
		(v4)edge(v3)
		(v4)edge(v5)
		(v6)edge(v5)
    	(v6)edge(v7);
	\end{tikzpicture}
	
	\begin{tikzpicture}[scale=.5,xscale=3,yscale=2,ar/.style={->,thick}]
	\draw(-6.1,0)node(v1){$\cdots$}
	(-4.8,0)node(v2){$P_{t(\nu_{m-1})}$}
	(-3.4,0)node(v3){$P_{t(\nu_m)}$}
	(-2,0)node(v4){$P_{t(\iota_{\omega_q(\za)})}$}
	(-.7,0)node(v5){$\cdots$}
	(.5,0)node(v6){$P_{t(\iota_m)}$};		
	\draw[ar](v1)edge(v2)
	(v3)edge(v2)
	(v3)edge(v4)
	(v5)edge(v4)
	(v6)edge(v5);
\end{tikzpicture}
\end{center}

(2) If $\sigma_1$ and $\sigma_r$ are both direct, then $\P_\za$ is given by
\begin{center}
	\begin{tikzpicture}[scale=.5,xscale=3,yscale=2,ar/.style={->,thick}]
		\draw(-6.1,0)node(v1){$P_{t(\ell_n)}$}
		(-5,0)node(v2){$\cdots$}
		(-3.7,0)node(v3){$P_{t(\ell_{\omega_p(\za)})}$}
		(-2.3,0)node(v4){$P_{t(\nu_1)}$}
		(-1,0)node(v5){$P_{t(\nu_2)}$}
		(.3,0)node(v6){$P_{t(\nu_3)}$}
		(1.5,0)node(v7){$\cdots$};		
		\draw[ar](v1)edge(v2)
		(v2)edge(v3)
		(v4)edge(v3)
		(v4)edge(v5)
		(v6)edge(v5)
		(v6)edge(v7);
	\end{tikzpicture}
	
	\begin{tikzpicture}[scale=.5,xscale=3,yscale=2,ar/.style={->,thick}]
		\draw(-6.1,0)node(v1){$\cdots$}
		(-4.8,0)node(v2){$P_{t(\nu_{m-1})}$}
		(-3.4,0)node(v3){$P_{t(\nu_m)}$}
		(-2,0)node(v4){$P_{t(\iota_{\omega_q(\za)-1})}$}
		(-.7,0)node(v5){$\cdots$}
		(.5,0)node(v6){$P_{t(\iota_m)}$};		
		\draw[ar](v2)edge(v1)
		(v2)edge(v3)
		(v4)edge(v3)
		(v5)edge(v4)
		(v6)edge(v5);
	\end{tikzpicture}
\end{center}
	
(3) If $\sigma_1$ and $\sigma_r$ are both inverse, then $\P_\za$ is given by

\begin{center}
	\begin{tikzpicture}[scale=.5,xscale=3,yscale=2,ar/.style={->,thick}]
		\draw(-6.1,0)node(v1){$P_{t(\ell_n)}$}
		(-5,0)node(v2){$\cdots$}
		(-3.7,0)node(v3){$P_{t(\ell_{\omega_p(\za)-1})}$}
		(-2.3,0)node(v4){$P_{t(\nu_1)}$}
		(-1,0)node(v5){$P_{t(\nu_2)}$}
		(.3,0)node(v6){$P_{t(\nu_3)}$}
		(1.5,0)node(v7){$\cdots$};		
		\draw[ar](v1)edge(v2)
		(v2)edge(v3)
		(v3)edge(v4)
		(v5)edge(v4)
		(v5)edge(v6)
		(v7)edge(v6);
	\end{tikzpicture}
	
	\begin{tikzpicture}[scale=.5,xscale=3,yscale=2,ar/.style={->,thick}]
	\draw(-6.1,0)node(v1){$\cdots$}
	(-4.8,0)node(v2){$P_{t(\nu_{m-1})}$}
	(-3.4,0)node(v3){$P_{t(\nu_m)}$}
	(-2,0)node(v4){$P_{t(\iota_{\omega_q(\za)})}$}
	(-.7,0)node(v5){$\cdots$}
	(.5,0)node(v6){$P_{t(\iota_m)}$};		
	\draw[ar](v1)edge(v2)
	(v3)edge(v2)
	(v3)edge(v4)
	(v5)edge(v4)
	(v6)edge(v5);
\end{tikzpicture}
\end{center}

(4) If $\sigma_1$ is inverse and $\sigma_r$ is direct, then $\P_\za$ is given by
\begin{center}
	\begin{tikzpicture}[scale=.5,xscale=3,yscale=2,ar/.style={->,thick}]
	\draw(-6.1,0)node(v1){$P_{t(\ell_n)}$}
	(-5,0)node(v2){$\cdots$}
	(-3.7,0)node(v3){$P_{t(\ell_{\omega_p(\za)-1})}$}
	(-2.3,0)node(v4){$P_{t(\nu_1)}$}
	(-1,0)node(v5){$P_{t(\nu_2)}$}
	(.3,0)node(v6){$P_{t(\nu_3)}$}
	(1.5,0)node(v7){$\cdots$};		
	\draw[ar](v1)edge(v2)
	(v2)edge(v3)
	(v3)edge(v4)
	(v5)edge(v4)
	(v5)edge(v6)
	(v7)edge(v6);
\end{tikzpicture}

	\begin{tikzpicture}[scale=.5,xscale=3,yscale=2,ar/.style={->,thick}]
	\draw(-6.1,0)node(v1){$\cdots$}
	(-4.8,0)node(v2){$P_{t(\nu_{m-1})}$}
	(-3.4,0)node(v3){$P_{t(\nu_m)}$}
	(-2,0)node(v4){$P_{t(\iota_{\omega_q(\za)-1})}$}
	(-.7,0)node(v5){$\cdots$}
	(.5,0)node(v6){$P_{t(\iota_m)}$};		
	\draw[ar](v2)edge(v1)
	(v2)edge(v3)
	(v4)edge(v3)
	(v5)edge(v4)
	(v6)edge(v5);
\end{tikzpicture}
\end{center}
\end{proposition}
\begin{proof}
By Proposition \ref{proposition:object-any arc}, the projective resolution $\P_\za$ is given by the complex associated to $\za$ with respect to the projective coordinate $\zD_p^*$. When $\sigma_1$ is a direct string, the left part of $\P_\za$ is as follows, which can be seen directly from the picture in Figure \ref{fig:proj-res1}:
\begin{center}
	\begin{tikzpicture}[scale=.5,xscale=3,yscale=2,ar/.style={->,thick}]
		\draw(-6.1,0)node(v1){$P_{t(\ell_n)}$}
		(-5,0)node(v2){$\cdots$}
		(-3.7,0)node(v3){$P_{t(\ell_{\omega_p(\za)})}$}
		(-2.3,0)node(v4){$P_{t(\nu_1)}$}
		(-1,0)node(v5){$P_{t(\nu_2)}$}
		(.3,0)node(v6){$P_{t(\nu_3)}$}
		(1.5,0)node(v7){$\cdots$};		
		\draw[ar](v1)edge(v2)
		(v2)edge(v3)
		(v4)edge(v3)
		(v4)edge(v5)
		(v6)edge(v5)
		(v6)edge(v7);
	\end{tikzpicture}
\end{center}	
\begin{figure}[ht]\centering
	\begin{center}
		\begin{tikzpicture}[scale=.6,>=stealth]
			\draw[thick](-12.5+.5,3.5)to(-12.5+.5,-3.5);	
			\draw[red!50,thick](-12.5+3,3.5)to(-15+3,3.5);
			\draw[red!50,thick](-12.5+3,-3.5)to(-15+3,-3.5);
			
			\draw[red!50,thick](-9.7+2,1.6)to(-12.5+3,3.5);
			\draw[red!50,thick](-9.7+2,-1.6)to(-12.5+3,-3.5);
			\draw[red!50,thick](-9.7+2,-1.6)to(-9.7+2,1.6);

			\draw[red!50,thick](-9.7+2,1.6)to(-7+2,-3);
			\draw[red!50](-9.7+2,1.6)to(-8.5+2,-3);
			\draw[red!50](-9.7+2+2,3.5)to(-9+2+2,-3);
			\draw[thick,red!50](-9.7+2+4,3.5)to(-9+2+2,-3);
			
			\draw[red!50] (-8.5+1.7,1) node {\tiny$\nu_1^*$};
			\draw[red!50] (-8.5+4.8,1) node {\tiny$\nu_2^*$};
			\draw (-9,-.5) node {$\za$};
			\draw[red!50] (-8.7+2.8,-2.5) node {$\cdots$};
			\draw[red!50] (-8.7+4.2,3.2) node {$\cdots$};
			
			\draw[bend right,thick,->](-8.15,2)to(-9+2,.3);
			\draw (-11+2.8,1) node {\tiny$\sigma_1$};
			
			\draw[thick,dark-green](-12.5+.5,3.5)to(-10.5,-4.5);
			\draw[dark-green] (-10.5,-5) node {\tiny$t(\ell)=t(\ell_n)$};
			
			\draw[thick,dark-green](-12.5+.5,3.5)to(-8.5,-4.5);
			%				\draw[dark-green] (-8,-5) node {\tiny$t(\ell_{n-1})$};
			
			\draw[bend left,thick,dark-green](-12.5+.5,3.5)to(-12.5+5.2,-2.5);
			\draw[dark-green] (-9.7,.8) node {\tiny$t(\ell_{\omega_p(\za)})$};

			\draw[thick,dashed,dark-green](-12.5+6.4,-2)to(-12.5+5.2,-2.5);     				\draw[thick,dashed,dark-green](-12.5+6,3)to(-12.5+7.5,2.6); 
			\draw[thick,dark-green](-12.5+7.5,2.6)to(-12.5+9,-2.6);       	       
			
			\draw[thick,dark-green](-12.5+6,3)to(-12.5+6.4,-2);
			\draw[dark-green] (-8.5+1.5,2) node {\tiny$t(\nu_1)$};  
			\draw[dark-green] (-8.5+5.5,-2) node {\tiny$t(\nu_2)$};                  
			%%%%%%
			\draw[red,fill=red] (-7+2,-3) circle (0.1);
			\draw[red,fill=red] (-9.7+2,-1.6) circle (0.1);
			\draw[red,fill=red] (-9.7+2,1.6) circle (0.1);
			\draw[red,fill=red] (-12.5+.5,3.5) circle (0.1);
			\draw[red,fill=red] (-12.5+3,3.5) circle (0.1);
			\draw[red,fill=red] (-12.5+3,-3.5) circle (0.1);
			\draw[red,fill=red] (-12.5+.5,-3.5) circle (0.1);		
			\draw[red,fill=red] (-12.5+5.2,-2.45) circle(0.1);
			\draw[red,fill=red] (-12.5+6.4,-2) circle(0.1);	
			\draw[red,fill=red] (-12.5+6,3)	 circle(0.1);
			\draw[red,fill=red] (-12.5+7.5,2.6)	 circle(0.1);					
			\draw[red,fill=red] (-12.5+9,-2.6)	 circle(0.1);				
			%%%%%%%%%%%%%%%%%%%%%%%%%%%%%%%%%%%%%%%%%%%%%%%%%%%%%%%%%%%%%%%%%%%%%%%%%%%
			
			\draw[very thick](-12,0)to(-2,0);
			\draw[thick,fill=white] (-12,0) circle (0.1);
			\draw (-12.5,0) node {\tiny$p$};
		\end{tikzpicture}
	\end{center}
	\caption{If $\sigma_1$ is a direct string, then the left part of $\P_\za$ is obtained by successively connecting the projective modules associated to the arcs in $\zD_p^*$ that $\za$ intersects: $t(\ell_n)$, $t(\ell_{n-1})$, $\cdots$, $t(\ell_{\omega_p(\za)})$, $t(\nu_1)$, $t(\nu_2)$, $\cdots$.}\label{fig:proj-res1}
\end{figure}
When $\sigma_1$ is an inverse string, the left part of $\P_\za$ is as follows, which can be seen directly from the picture in Figure \ref{fig:proj-res2}, noticing that in this case, $P_{t(\ell_{\omega_p(\za)})}$ does not appear in the complex, since $\za$ does not intersect $t(\ell_{\omega_p(\za)})$:
\begin{center}
	\begin{tikzpicture}[scale=.5,xscale=3,yscale=2,ar/.style={->,thick}]
		\draw(-6.1,0)node(v1){$P_{t(\ell_n)}$}
		(-5,0)node(v2){$\cdots$}
		(-3.7,0)node(v3){$P_{t(\ell_{\omega_p(\za)-1})}$}
		(-2.3,0)node(v4){$P_{t(\nu_1)}$}
		(-1,0)node(v5){$P_{t(\nu_2)}$}
		(.3,0)node(v6){$P_{t(\nu_3)}$}
		(1.5,0)node(v7){$\cdots$};		
		\draw[ar](v1)edge(v2)
		(v2)edge(v3)
		(v3)edge(v4)
		(v5)edge(v4)
		(v5)edge(v6)
		(v7)edge(v6);
	\end{tikzpicture}
\end{center}
At last, after combining the left part and the right part, we get the whole complex $\P_\za$ in the statement.		
	
	\begin{figure}[ht]\centering
		\begin{center}
			\begin{tikzpicture}[scale=0.6,>=stealth]
				\draw[thick](-12.5+.5,3.5)to(-12.5+.5,-3.5);	
				
				\draw[red!50,thick](-12.5+3,3.5)to(-15+3,3.5);
				\draw[red!50,thick](-12.5+3,-3.5)to(-15+3,-3.5);
				
				\draw[red!50,thick](-9.7+2,1.6)to(-12.5+3,3.5);
				\draw[red!50,thick](-9.7+2,-1.6)to(-12.5+3,-3.5);
				\draw[red!50,thick](-9.7+2,-1.6)to(-9.7+2,1.6);

				\draw[red!50,thick](-9.7+2,-1.6)to(-7+2,3);
				\draw[red!50](-9.7+2,-1.6)to(-9+2.7,3);
				
				\draw[thick,red!50](-9.7+2+4,-4)to(-9+2+2,3);
				\draw[red!50](-9.7+4,-4)to(-9+2+2,3);
				
				\draw[red!50] (-8.5+1.7,-1) node {\tiny$\nu_1^*$};
				\draw (-10.5,-.5) node {$\za$};
				\draw[red!50] (-8.7+3,2.7) node {$\cdots$};
				\draw[red!50] (-8.7+4,-3) node {$\cdots$};				
				\draw[bend right,thick,->](-9+2,-.3)to(-8.15,-2);
				\draw (-10.7+2.2,-1) node {\tiny$\sigma^{-1}_1$};

				\draw[thick,dark-green](-12.5+.5,3.5)to(-10.5,-4.5);
				\draw[dark-green] (-10.5,-5) node {\tiny$t(\ell)=t(\ell_n)$};
				
				\draw[thick,dark-green](-12.5+.5,3.5)to(-7.5,-4.5);
				\draw[dark-green] (-7.5,-5) node {\tiny$t(\ell_{\omega_p(\za)-1})$};
				
				\draw[thick,dark-green](-12.5+.5,3.5)to(-12.5+5.2,.8);
				\draw[dark-green] (-12.5+3.4,.9) node {\tiny$t(\ell_{\omega_p(\za)})$};

				\draw[thick,dashed,dark-green](-12.5+5.2,.8)to(-12.7+6.5,2);             
\draw[thick,dashed,dark-green](-12.5+6,-3)to(-12.5+7.5,-2.6); 
\draw[thick,dark-green](-12.5+7.5,-2.6)to(-12.5+9,2.6);  				
				\draw[thick,dark-green](-12.7+6.5,2)to(-12.5+6,-3);
				\draw[dark-green] (-8.5+1.35,-2.3) node {\tiny$t(\nu_1)$};   
				\draw[dark-green] (-8.5+5.5,1.5) node {\tiny$t(\nu_2)$};   				
				%%%%%%
				\draw[red,fill=red] (-7+2,3) circle (0.1);
				\draw[red,fill=red] (-9.7+2,-1.6) circle (0.1);
				\draw[red,fill=red] (-9.7+2,1.6) circle (0.1);
				\draw[red,fill=red] (-12.5+.5,3.5) circle (0.1);
				\draw[red,fill=red] (-12.5+3,3.5) circle (0.1);
				\draw[red,fill=red] (-12.5+3,-3.5) circle (0.1);
				\draw[red,fill=red] (-12.5+.5,-3.5) circle (0.1);			
				\draw[red,fill=red] (-12.7+6.5,2) circle (0.1);
				\draw[red,fill=red] (-12.5+6,-3) circle (0.1);	                
				\draw[red,fill=red] (-12.5+5.2,.8) circle (0.1);
				
            \draw[red,fill=red] (-12.5+7.5,-2.6)	 circle(0.1);	
            \draw[red,fill=red] (-12.5+9,2.6)	 circle(0.1);	
				%%%%%%%%%%%%%%%%%%%%%%%%%%%%%%%%%%%%%%%%%%%%%%%%%%%%%%%%%%%%%%%%%%%%%%%%%%%
				
				\draw[very thick](-12,0)to(-2,0);
				\draw[thick,fill=white] (-12,0) circle (0.1);
				\draw (-12.5,0) node {\tiny$p$};			
			\end{tikzpicture}
%			\begin{tikzpicture}[>=stealth,scale=.8]
%				\draw[red!50,thick] (0,-1)--(0,1)--(2,2)--(4,1)--(4,-1)--(7,1.5);
%				\draw[red!50] (4,-1)--(5.5,1.5);
%				\draw[red!50] (8,-1)--(7,1.5);
%				\draw[thick](0,-1)--(4,-1);
%				\draw[thick,dark-green](0,-1)--(4.5,.8);
%				\draw[thick,dashed,dark-green](5.5,.8)--(4.5,.8);			
%				\draw[thick,dark-green](5.5,.8)--(6.3,-1);				
%				\node at (2,-1.5) {\tiny$q$};
%				
%				\draw[red!50] (6,1) node {$\cdots$};
%				
%				\draw[very thick](2,-1)to(8,1);
%				\draw[] (7,.3) node {$\za$};
%				
%				\draw[dark-green](6.3,-1.4) node {\tiny$t(\ell)=t(\nu)$};   
%				\draw[red!50] (5,-.5) node {\tiny$\nu^*$};	
%				
%				
%				\draw[bend right,thick,->](4.6,-.5)to(3.5,-1);
%				\draw (3.6,-.1) node {\tiny$\sigma^{-1}_1$};
%				\draw[dark-green] (1.5,0) node {\tiny$t(\ell_n)$};
%				
%				\draw[thick,fill=white] (2,-1) circle (0.06);
%				\draw[red,thick,fill=red] (0,1) circle (0.06);
%				\draw[red,thick,fill=red] (0,1) circle (0.06);
%				\draw[red,thick,fill=red] (2,2) circle (0.06);
%				\draw[red,thick,fill=red] (4,1) circle (0.06);
%				\draw[red,thick,fill=red] (4,-1) circle (0.06);
%				\draw[red,thick,fill=red] (0,-1) circle (0.06);
%				\draw[red,thick,fill=red] (4.5,.8) circle (0.06);
%				\draw[red,thick,fill=red] (5.5,.8) circle (0.06);
%				\draw[red,thick,fill=red] (6.3,-1) circle (0.06);			\draw[red,thick,fill=red] (7,1.5) circle (0.06);	
%				\draw[] (-1.5,-3.5) node {};		
%			\end{tikzpicture}
		\end{center}
		\caption{If $\sigma_1$ is an inverse string, then the left part of $\P_\za$ is given by successively connecting the projective modules associated to the arcs in $\zD_p^*$ that $\za$ intersects: $t(\ell_n)$, $t(\ell_{n-1})$, $\cdots$, $t(\ell_{\omega_p(\za)-1})$, $t(\nu_1)$, $t(\nu_2)$, $\cdots$. Note that in this case $\za$ does not intersect $t(\ell_{\omega_p(\za)})$.}\label{fig:proj-res2}
	\end{figure}
\end{proof}
The above proposition derives the following
\begin{corollary}\label{lem:proj-res2}
	Let $\za$ be a zigzag $\bpoint$-arc with endpoint $p$ on the boundary. Assume that starting from $p$, $t(\ell)$ is the first arc in $\zD_p^*$ that $\za$ intersects, where we denote the intersection point by $\rho$, then we have $f_0(\rho)=-\omega_p(\za)$.
\end{corollary}
\begin{proof}
We still denote by $\sigma=\sigma_1\sigma_2\cdots\sigma_m$ the string associated to $\za$, and by $\nu_1^*$ the endpoint of $\sigma_1$. Without loss generality, we assume that $\sigma_1$ is the string corresponding the part of $\za$ starting from $p$.  If $\sigma_1$ is a direct string, then $t(\ell_n)$ is the first arc in $\zD_p^*$ that $\za$ intersects, that is $t(\ell)=t(\ell_n)$, see the picture in Figure \ref{fig:proj-res1}. In this case, since $\P_\za$ is the projective resolution of $M_\za$, it follows from the description of $\P_\za$ given in Proposition \ref{prop:proj-res} that $P_{t(\nu)}$ is located in the $(-1)$-th position of $\P_{(\za,f_0)}$, and thus we have $f_0(t(\nu))=-1$. Therefore $f_0(t(\ell))=-\omega_q(\za)$, which again follows from Proposition \ref{prop:proj-res}.

Now let $\sigma_1$ be an inverse string. 
If $\omega_q(\za)\geq 1$, then $t(\ell_n)$ is the first arc in $\zD_p^*$ that $\za$ intersects, that is $t(\ell)=t(\ell_n)$, see the picture in Figure \ref{fig:proj-res2}. Therefore, similarly we have $f_0(t(\ell))=-\omega_q(\za)$. 
If $\omega_q(\za)=0$, then $t(\nu_1)$ is the first arc in $\zD_p^*$ that $\za$ intersects, that is $t(\ell)=t(\nu_1)$. Since in this case $P_{t(\nu_1)}$ is located in the $0$-th position of $\P_{(\za,f_0)}$, we have $f_0(t(\ell))=0$, which again equals $-\omega_q(\za)$.
%
%The proof has done.
\end{proof}

\begin{proposition-definition}\label{definition:extension in interior}
Let $\za$ and $\zb$ be two zigzag curves which intersect at a non-puncture interior point $q$ on the surface locally depicted as follows, where $\aaa$ and $\bbb$ are the oriented intersections associated to $q$:
\begin{center}
\begin{tikzpicture}[>=stealth,scale=0.5]
            \draw (-.5,7) node {\tiny$\zb$};

            \draw [bend right,thick] (8.7,5) to (8.7,0);
%            \draw (9.25,1) node {\tiny$\rpoint$};
%			\draw (9.25,4) node {\tiny$\rpoint$};

%%			\draw (3,6.5) node {\tiny$\rpoint$};
%			\draw[thick,dark-green] (7,6) -- (4,7.5);
%
%%			\draw (3,-1.5) node {\tiny$\rpoint$};
%			\draw[thick,dark-green] (7,-1) -- (4,-2.5);
			\draw[dashed,thick,dark-green!50] (0,5) -- (0,-.2);

			\draw[thick,dark-green!50] (0,5) -- (4,7.5);
			\draw[thick,dark-green!50] (0,-.2) -- (4,-2.5);

			\draw[dashed,thick,dark-green!50] (8,4) -- (4,7.5);
			\draw[dashed,thick,dark-green!50] (8,1) -- (4,-2.5);

			\draw (-.5,-2) node {\tiny$\za$};
			
			\draw (1.3,6.5) node {\tiny$p_2$};
			\draw[dark-green!50] (3.4,6.2) node {\tiny$t(\ell_2)$};
			\draw (1.3,-1.6) node {\tiny$p_1$};
			\draw[dark-green!50] (3.4,-1.2) node {\tiny$t(\ell_1)$};
			\draw (5.2,2.5) node {\tiny$q$};
			\draw (2.5,2.5)node {\tiny$\bbp$};

            \draw [bend left,thick,->] (4.2,2) to (4.2,3);
			\draw (3.5,2.5) node {\tiny$\aaa$};
            \draw [bend left,thick,->] (5.4,3) to (5.4,2);
			\draw (6.1,2.5) node {\tiny$\aaa$};
            \draw [bend left,thick,->] (4.2,3) to (5.4,3);
			\draw (4.7,3.7) node {\tiny$\bbb$};
            \draw [bend left,thick,->] (5.4,2) to (4.2,2);
			\draw (4.7,1.3) node {\tiny$\bbb$};

            \draw[thick,fill=white] (7.95,2.5) circle (0.2);
			\draw[very thick] (8,-.5) -- (0.15,6.85);
			\draw[very thick] (8,5.5) -- (0.15,-1.85);
			\end{tikzpicture}
\end{center}
Then we have $f_0(p_1)=g_0(p_2)$ or $f_0(p_1)=g_0(p_2)+1$, where $p_1$ and $p_2$ are the intersection points between the arcs in $\zD_p^*$ and $\za$ and $\zb$ respectively which nearest to the point $q$, and $f_0$ and $g_0$ are respectively the canonical gradings associated to $\za$ and $\zb$, as recalled at the beginning of subsection \ref{subsection:morph}. 
 \begin{enumerate}[\rm(1)]
  \item if $f_0(p_1)=g_0(p_2)$, then we call $\aaa$ a \emph{weighted-oriented-intersection} from $\za$ to $\zb$ with weight $w(\aaa)=0$, and call $\bbb$ a \emph{weighted-oriented-intersection} from $\zb$ to $\za$ with weight $w(\bbb)=1$;

  \item if $f_0(p_1)=g_0(p_2)+1$, then we call $\aaa$ a \emph{weighted-oriented-intersection} from $\za$ to $\zb$  with weight $w(\aaa)=1$, and call $\bbb$ a \emph{weighted-oriented-intersection} from $\zb$ to $\za$ with weight $w(\bbb)=0$.
 \end{enumerate}
  \end{proposition-definition}
\begin{proof}
%By the descriptions of $\P_\za=\P_{(\za,f_0)}$ and $\P_\zb=\P_{(\zb,g_0)}$ given in Theorem \ref{thm:main-projective-res}, both $f_0(p_1)$ and $g_0(p_2)$ belongs to $\{0,-1\}$.
By the equality \eqref{equ:morph2}, there is a morphism from $\P_{(\za,f_0)}$ to $\P_{(\zb,g_0)}[g_0(p_2)-f_0(p_1)]$ arising from $\aaa$, as well as a morphism from $\P_{(\zb,g_0)}$ to $\P_{(\za,f_0)}[f_0(p_1)-g_0(p_2)+1]$ arising from $\bbb$.
On the other hand, since both $\P_{(\za,f_0)}$ and $\P_{(\zb,g_0)}$ are projective resolutions of modules, we have $g_0(p_2)-f_0(p_1)\geq 0$ and $f_0(p_1)-g_0(p_2)+1\geq 0$.
So we have $f_0(p_1)=g_0(p_2)$ or $f_0(p_1)=g_0(p_2)+1$.
%the projective modules $P_{\ell^*_1}$ and $P_{\ell^*_2}$ are always at the $(-2)-th$ position or at the $(-1)-th$ position of $\P_\za$ and $\P_\zb$.
%Then a similar argument as above shows that the equality \eqref{equ:morph2} guarantees that we have $f_0(p_1)=g_0(p_2)$ or $f_0(p_1)=g_0(p_2)+1$.
\end{proof}

\begin{definition}\label{definition:weighted intersections}
Let $\za$ and $\zb$ be two zigzag arcs on the surface which share an endpoint $q\in \calp_{\bpoint}$. For any $m\in \mathbb{N}$, let $(\aaa,m)$ and $(\bbb,m)$ be the associated oriented intersections from $\za$ to $\zb$ and from $\zb$ to $\za$ respectively, which are introduced in subsection \ref{subsection: derived categories and derived categories}. Assume that there are $n$ arcs $t(\ell_i), 1\leq i \leq n$ connected to $q$, where $\za$ and $\zb$ connect to $q$ between arcs $t(\ell_{u-1})$ and $t(\ell_u)$, and between arcs $t(\ell_{v-1})$ and $t(\ell_v)$ respectively, see the following diagram
 \[\scalebox{1}{
\begin{tikzpicture}[>=stealth,scale=0.9]

\draw[dark-green!50,thick](2,0)--(5,-1.6);
\draw[dark-green!50,thick](2,0)--(-1,-1.6);
\draw[dark-green!50,thick](2,0)--(2,-2.5);
\draw[dark-green!50,thick](2,0)--(5,1.6);
\draw[dark-green!50,thick](2,0)--(-1,1.6);
\draw[dark-green!50,thick](2,0)--(2,2.5);

\draw[very thick](2,0)--(5,0);
\draw[very thick](2,0)--(-1,0);
\node [] at (0,-.6) {\tiny$\bbp$};

\node [] at (-.8,0.25) {\tiny$\za$};
\node [] at (4.8,-0.25) {\tiny$\zb$};

\node [] at (.95,.9) {\tiny$p_1$};
\node [] at (1.25,.2) {\tiny$p_2$};

            \draw[thick] (2,0) circle (.3);
            \draw [thick,->] (2.27,.1) to (2.31,0);
            \draw [thick,->] (1.73,-.1) to (1.69,0);

			\draw (2.3,.5) node {\tiny$\aaa$};
			\draw (2.3,-.5) node {\tiny$\bbb$};

\node [dark-green!50] at (-.8,1.1) {\tiny$t(\ell_u)$};
%\node [dark-green] at (1.75,2) {\tiny$\ell_v$};
\node [dark-green!50] at (4.8,-1.15) {\tiny$t(\ell_v)$};
\node [dark-green!50] at (-1,-1.2) {\tiny$t(\ell_{u-1})$};
\node [dark-green!50] at (4.8,1.1) {\tiny$t(\ell_{v-1})$};
			    \draw[dotted,very thick]plot [smooth,tension=1] coordinates {(0,0) (.6,0.3) (2,1.2) (3,0.8)};
			    \draw[dotted,very thick]plot [smooth,tension=1] coordinates {(+4,0) (-.6+4,-0.3) (-2+4,-1.2) (.8,-.2) (1.5,.6)};
   \draw[thick,fill=white] (2,0) circle (0.09);
			\draw (6,0) node {};
\end{tikzpicture}}\]
Then we call $(\aaa,m)$ a \emph{weighted-oriented-intersection} from $\za$ to $\zb$ with weight $w(\aaa,m)=(v-u)+mn$, and call $(\bbb,m)$ a \emph{weighted-oriented-intersection} from $\zb$ to $\za$ with weight $w(\bbb,m)=(u-v)+mn$.
\end{definition}

In above definition, we always choose $v-u$ and $u-v$ in a way such that $0 \leq v-u, u-v \leq n-1$ by reducing modulo $n$ if necessary.

\begin{remark}\label{remark:inj}
	Similar to Proposition \ref{prop:proj-res}, for any zigzag $\bpoint$-arc $\za$, if the endpoint is a puncture, rather than a marked point on the boundary, one can still  directly give the associated projective resolution $\P_\za$. The subtle thing is that the complex will be infinite on the left, where the projective module $P_{t(\ell_i)}$ periodically appears, cf. Figure \ref{figure:simple to projective}. 
	
	For a finite dimensional algebra $A$, we know that the bounded derived category $\cald^b(A)$ is triangle equivalent to the homotopy categories $\kaa$ and $\iaa$. The constructions and results obtained above can dually be established for $\iaa$. In particular, one can define the {\em injective coordinate} $\zD^*_i$ to be $t^{-1}(\zD_s)$, which can be used as the coordinate to compute the minimal injective resolution of a module over gentle algebra $A(\zD^*_s)$.
	We will not discuss this in detail, but an example will be given in Section \ref{section:example}.
\end{remark}

Proposition \ref{prop:proj-res} and the descriptions in Remark \ref{remark:inj} directly yield the following corollary, where the third one is well-known from the perspective of the representation theory, that is, both the projective dimension $\pd M$ and the injective dimension $\id M$ of a band module $M$ are one.

\begin{corollary}\label{corollary:proj-res-string}
	Let $\za$ be a zigzag curve, then
	\begin{enumerate}[\rm(1)]
		\item $\pd M_\za=max\{w_p(\za),w_q(\za)\}$, and $\id M_\za=max\{cw_p(\za),cw_q(\za)\}$ if $\za$ is a zigzag $\bpoint$-arc with both endpoints $p$ and $q$ in $\calm_{\bpoint}$;
		\item $\pd M_\za=\id M_\za=\infty$, if $\za$ is a zigzag $\bpoint$-arc with at least one endpoint in $\calp_{\bpoint}$;
		\item  $\pd M_\za=\id M_\za=1$, if $\za$ is a zigzag closed curve.
	\end{enumerate}
\end{corollary}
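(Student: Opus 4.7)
The plan is to read each dimension directly off the explicit minimal resolutions that have just been constructed, matching the length of the homology (or cohomology) completion to the weight data at the endpoints of $\za$.

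For part (1), I will apply Theorem \ref{thm:main-projective-res} to compute the minimal projective resolution of $M_\za$ from the homology completion $\sigma$ of $\omega(\za)$, and then combine this with Lemma \ref{lemma:homology completion}(2). When both endpoints $p$ and $q$ lie in $\calm_{\bpoint}$, that lemma gives a finite homotopy string whose left tail consists of arrows $\sigma_{-1},\ldots,\sigma_{1-w_p(\za)}$ and whose right tail consists of arrows $\sigma_{n+1},\ldots,\sigma_{n+w_q(\za)-1}$. Tracking the positions in the four-case formula of Theorem \ref{thm:main-projective-res}, the leftmost summand on the left side is $P_{v_{1-w_p(\za)}}$ placed in cohomological degree $-w_p(\za)$, while the farthest summand on the right side involves the terminal vertex $v_{n+w_q(\za)}$ placed in degree $-w_q(\za)$. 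Taking the maximum yields $\pd M_\za=\max\{w_p(\za),w_q(\za)\}$. For the injective dimension I invoke Remark \ref{remark:inj}: the same analysis applied to the cohomology completion with respect to the injective coordinate $\zD^*_i=t^{-1}(\zD_s)$ gives the minimal injective coresolution. Since $t^{-1}$ rotates the $\bpoint$-endpoint to the previous $\rpoint$-point (rather than the next one), in each endpoint polygon the counter of arcs after the first crossing becomes the counter of arcs before it. In other words, the "injective weight" at $p$ equals $cw_p(\za)$, and likewise at $q$, so $\id M_\za=\max\{cw_p(\za),cw_q(\za)\}$.

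For part (2), if at least one endpoint of $\za$ lies in $\calp_{\bpoint}$, then Lemma \ref{lemma:homology completion}(1), (3), or (4) shows that the homology completion of $\omega(\za)$ has an infinite periodic tail $(\sigma_{-s}\cdots\sigma_{-1})^{\infty}$ or $(\sigma_{n+1}\cdots\sigma_{n+t})^{\infty}$ on the puncture side. The resulting minimal projective resolution produced by Theorem \ref{thm:main-projective-res} is therefore unbounded, giving $\pd M_\za=\infty$. The dual statement $\id M_\za=\infty$ follows from Remark \ref{remark:inj}, since a puncture endpoint of $\za$ remains a puncture after the anti-twist, so the cohomology completion is also infinite on that side.

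For part (3), Theorem \ref{thm:main-projective-res}(2) exhibits the minimal projective resolution of $M_{(\za,\lambda,m)}$ as a complex concentrated in only two cohomological degrees, namely $0$ and $-1$, so $\pd M_{(\za,\lambda,m)}=1$. The dual construction via Remark \ref{remark:inj} gives a two-term injective coresolution, so $\id M_{(\za,\lambda,m)}=1$ as well.

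The only genuinely fiddly step will be the degree-counting in part (1): one has to verify, across all four subcases of Theorem \ref{thm:main-projective-res} (determined by the parity of $n$ and whether $\sigma_1$ is direct or inverse) and the corresponding subcases of Lemma \ref{lemma:homology completion}, that the leftmost (resp.\ rightmost) projective summand in $\P_\za$ sits in degree exactly $-w_p(\za)$ (resp.\ $-w_q(\za)$). This amounts to a careful reconciliation of the index $1-w_p(\za)$ from Lemma \ref{lemma:homology completion} with the extrapolated differential pattern $d_{-3},d_{-4},\ldots$ in Theorem \ref{thm:main-projective-res}, but is otherwise mechanical; the parts (2) and (3) are essentially immediate consequences of the preceding results.
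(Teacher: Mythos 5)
Your proposal is correct and follows the paper's approach exactly: the paper itself offers no written proof beyond the sentence ``These descriptions directly yield the following corollary,'' and your argument fills in precisely what is implicit there, namely reading the projective dimension off the left/right tails of the homology completion from Lemma \ref{lemma:homology completion} combined with the degree placement in Theorem \ref{thm:main-projective-res}, reading infinite projective dimension off the periodic tails in the punctured case, reading $\pd=1$ off the two-term complex for bands, and dualizing via the injective coordinate of Remark \ref{remark:inj} for the injective dimension. Your degree-counting for part (1) (leftmost summand $P_{v_{1-w_p(\za)}}$ at degree $-w_p(\za)$, rightmost summand $P_{v_{n+w_q(\za)}}$ at degree $-w_q(\za)$) is consistent with the displayed formulas, and you correctly flag that reconciling the index conventions across the four subcases of Theorem \ref{thm:main-projective-res} is the only nontrivial bookkeeping.
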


The above corollary allows us to compute the finitistic dimension $\fd A$ of a gentle algebra $A$, where $\fd A$ is defined as
\begin{equation*}
	\fd A := \sup \{\pd M: \text{~for~} M\in \ma \text{~with~} \pd M < \infty\}.
\end{equation*}

\begin{corollary}\label{corollary:fdim}
	Let $A$ be a gentle algebra, then
	\begin{equation}\label{eq:fd}
		\fd A={\rm max}\{\pd I \text{~for~injective~module~} I\},
	\end{equation}
	which also equals the number of arrows in the longest sequence with consecutive relations, where the sequence contains no arrow from an oriented cycle with full relations.
\end{corollary}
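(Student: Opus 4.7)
The plan is to combine Corollary \ref{corollary:proj-res-string} with a combinatorial reading of the weights $w_p, w_q$ via the homology completion. I will not recompute the resolutions from scratch, but rather translate each piece of the geometric data into a statement about arrows and relations in $Q(\zD^*_s)$, and then identify the extremal configurations as injective arcs.

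First, Corollary \ref{corollary:proj-res-string} reduces the problem: band modules contribute only $\pd=1$, and string modules $M_\za$ with at least one puncture endpoint have $\pd M_\za = \infty$, so they do not enter $\fd A$. Hence
\[
\fd A = \sup \bigl\{\max\{w_p(\za),w_q(\za)\} : \za \text{ zigzag } \bpoint\text{-arc with both endpoints in } \calm_\bpoint\bigr\}.
\]

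Second, I would rewrite each weight in terms of sequences of arrows with consecutive relations. By Lemma \ref{lemma:homology completion} together with Theorem \ref{thm:main-projective-res}, the homology completion $\sigma$ of $\omega(\za)$ on the $q$-side appends a finite chain of length-one homotopy substrings $\sigma_{n+1}\sigma_{n+2}\cdots$ whose number of terms is governed by $w_q(\za)$. The defining property of a homotopy string forces each consecutive pair to compose to a relation in $I$ (either $\sigma_i\sigma_{i+1}\in I$ or $\sigma_{i+1}^{-1}\sigma_i^{-1}\in I$), so the weight records, up to a standard conventional shift, the length of a maximal chain of arrows with consecutive relations emanating from the last arc $\ell^*_n$ of $\zD^*_s$ that $\za$ crosses. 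Moreover, finiteness of this completion corresponds precisely to the requirement that the chain not sit inside an oriented cycle all of whose consecutive compositions lie in $I$: otherwise the completion would periodise and $\za$ would terminate at a puncture, forcing $\pd M_\za = \infty$. This establishes the second equality.

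Third, for the first equality, I would show that the supremum is attained by an injective module. By Theorem \ref{theorem:projectives from simple coordinates}, the injective arcs are exactly the anti-twists $t^{-1}(\ell^*_i)$, and a local inspection shows each such arc enters the polygons of $\zD^*_s$ at its two endpoints in the extremal corner position. Consequently the weights of $t^{-1}(\ell^*_i)$ at both endpoints realise the maximum weight achievable from those polygons. Matching a longest consecutive-relation chain starting at $\ell^*_i$ with $I_i = M_{t^{-1}(\ell^*_i)}$ therefore gives $\pd I_i \geq \fd A$; the reverse inequality is automatic since injectives are modules.

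The main obstacle will be the last step: verifying that $t^{-1}(\ell^*_i)$ really does sit at the extremal corner in every polygon it meets, and hence realises the length of the longest consecutive-relation chain emanating from $\ell^*_i$. This requires a careful local case analysis at each endpoint of $t^{-1}(\ell^*_i)$, combining the explicit description of the anti-twist (counter-clockwise rotation of the $\rpoint$-endpoint of $\ell^*_i$ to the next $\bpoint$-point on the same boundary component) with the polygon ordering $\ell^*_1,\dots,\ell^*_n$ used in Definition \ref{definition: weight}. I expect to model this argument on the proof of Theorem \ref{theorem:projectives from simple coordinates} and a picture analogous to Figure \ref{figure:simple to projective1}, where the geometric constraints of the anti-twist force the desired extremal behaviour.
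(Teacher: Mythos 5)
Your plan is correct and tracks the paper's proof closely: both reduce via Corollary \ref{corollary:proj-res-string} to string modules with both endpoints in $\calm_\bpoint$, read $\fd A$ off the maximal achievable weight, and realize that weight by an injective arc $t^{-1}(\ell^*_i)$. The paper is marginally more economical at the last step---it fixes a boundary polygon $\bbp$ with maximal edge count, takes $\ell^*_1$ to be its first edge, observes $w_p(t^{-1}(\ell^*_1))$ is maximal and bounds the weight at the other endpoint of $t^{-1}(\ell^*_1)$ by maximality of $\bbp$, rather than establishing your stronger uniform claim that every $t^{-1}(\ell^*_i)$ enters both endpoint polygons at the extremal corner---but the local extremality computation you flag as the ``main obstacle'' is likewise asserted without detailed proof in the paper, so it is not a gap relative to the paper's argument.
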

\begin{proof}
%	Denote by $\zD_s^*$ the simple coordinate associated to $A$.
	Let $\bbp$ be a polygon in $\zD_s^*$ which has a boundary marked point $q$ and has maximal number of edges in the set of polygons containing boundary segments. Assume that $\bbp$ has $n$ edges, cf. Figure \ref{figure:weight and co-weight}.
	Then by Corollary \ref{corollary:proj-res-string}, $\pd M \leq n-1$ for any indecomposable $A$-module $M$ with finite projective dimension, and thus $\fd A \leq n-1$.
	
	Let $\ell^*_1$ be the first edge in $\bbp$. Denote by $I_1$ the injective module associated to $t^{-1}(\ell^*_1)$, that is, $I_1=M_{t^{-1}(\ell^*_1)}$, cf. Figure \ref{figure:dual-twist} for the construction of the arcs correspond injective modules.
	Denote by $p$ the another endpoint of $t^{-1}(\ell^*_1)$ which belongs to a polygon $\bbp'$.
	Note that $p$ is also a boundary marked point and $w_p(t^{-1}(\ell^*_1))\leq w_q(t^{-1}(\ell^*_1))$, since $\bbp$ is the maximal one. On the other hand, note that $w_q(t^{-1}(\ell^*_1))=n-1$, see Figure \ref{figure:weight and co-weight}. Then by Corollary \ref{corollary:proj-res-string}, $\pd I_1=n-1$.
	
	Since any sequence with consecutive relations in $A$ is connected by those arrows arising from the same polygon in $\zD_s^*$, thus the polygon $\bbp$ gives the longest such sequence, and the second statement follows directly from above discussion.
\end{proof}

The following theorem is the main result of this subsection.
Recall that when we represent the oriented intersections of curves as morphisms in $\kaa$, we denote a band object $\P_{(\za,f,\lambda,m)}$ by $\P_{(\za,f)}$ in order to state the results easily, if $m=1$ and the statement does not depend on the choice of $\lambda$, see subsection \ref{subsection: derived categories and derived categories}.
In the following we will continue to use  this notation. So for two such modules $M_\za$ and $M_\zb$, they are isomorphic if and only if $\za$ and $\zb$ are homotopic and the associated parameters coincide.

\begin{theorem}\label{theorem:main-extensions}
	Let $(\cals,\calm,\zD^*_s)$ be a marked surface with a simple coordinate, and let $\za$ and $\zb$ be two zigzag curves on the surface. Then for any \emph{oriented intersection} from $\za$ to $\zb$ with weight $\omega\geq 0$, there is a morphism in $\Ext^\omega(M_\za,M_\zb)$ associated to it.
	
	Furthermore, all of such morphisms form a basis of the space $\Ext^\omega(M_\za,M_\zb)$, unless
	\begin{enumerate}[\rm(1)]
		\item $\za$ and $\zb$ are the same $\bpoint$-arc. In this case, the identity map is the extra basis map of $\Hom(M_\za,M_\zb)$;
		\item $\za$ and $\zb$ are the same closed curve and $M_\za\cong M_\zb$. In this case, the identity map is the extra basis map in $\Hom(M_\za,M_\zb)$, and the self-extension corresponding to the Auslander-Reiten sequence of $M_\za$ is the extra basis map of $\Ext^1(M_\za,M_\zb)$.
	\end{enumerate}
\end{theorem}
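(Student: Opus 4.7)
The plan is to derive this theorem from the embedding $\ma \hookrightarrow \kaa$ that sends a module $M$ to its minimal projective resolution $\P_M$. Under this embedding, there is a canonical isomorphism
\[
\Ext^\omega(M_\za, M_\zb) \cong \Hom_{\kaa}(\P_{M_\za}, \P_{M_\zb}[\omega])
\]
for $\omega \geq 0$, and the right-hand side vanishes for $\omega < 0$ since both modules sit in the standard heart. By Theorem \ref{theorem:object}, $\P_{M_\za}$ coincides with the canonical projective complex $\P_{(\za, f_0)}$ arising from $\za$ with the canonical grading $f_0$ with respect to $\zD^*_p$, and similarly $\P_{M_\zb} = \P_{(\zb, g_0)}$; in the band case we identify $\P_{M(\za,\lambda,m)}$ with $\P_{(\za,f_0,\lambda,m)}$. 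So the entire problem reduces to computing morphisms between graded-curve objects in the derived category, which is controlled by Proposition \ref{prop:obj-in-derived-cat}.

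Given this reduction, the key step is to invoke Proposition \ref{prop:obj-in-derived-cat} (i.e.\ \cite[Theorem 3.3]{OPS18}), which produces a basis of $\Hom^\bullet(\P_{(\za,f_0)}, \P_{(\zb,g_0)})$ indexed by oriented intersections of $\za$ and $\zb$. For each such intersection $q$ the shift-degree of the associated morphism is given by one of the explicit formulas \eqref{equ:morph1}--\eqref{equ:morph33}, in terms of $g_0(p_2) - f_0(p_1)$ with a possible correction of $+1$ for non-punctured interior points or $+mn$ for punctured points. We interpret this shift precisely as the \emph{weight} $\omega$ attached to a weighted-oriented intersection. The weighted-oriented intersections of weight $\omega \geq 0$ then form a basis of $\Hom(\P_\za, \P_\zb[\omega])$, hence of $\Ext^\omega(M_\za, M_\zb)$. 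Oriented intersections yielding negative shifts may be discarded, since $\Hom_{\kaa}(\P_\za, \P_\zb[n]) = 0$ for $n<0$ as both objects represent modules.

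The exceptional cases follow from the corresponding clauses of Proposition \ref{prop:obj-in-derived-cat}. When $\za = \zb$ is a single $\bpoint$-arc, the identity $\mathrm{id}_{\P_\za}$ is an additional basis element of $\Hom(\P_\za, \P_\zb)$ and descends under the embedding to $\mathrm{id}_{M_\za}$ in $\Hom(M_\za, M_\zb)$. When $\za = \zb$ is a closed curve whose band-module parameters agree (so $M_\za \cong M_\zb$), there is in addition the canonical map $\xi : \P_\za \to \P_\zb[1]$ coming from the Auslander-Reiten triangle in $\kaa$; since $M_\za$ is a quasi-simple band module, it lies at the mouth of a homogeneous tube of the AR-quiver of $\ma$, and the AR-triangle in $\kaa$ restricts via the standard $t$-structure to the Auslander-Reiten sequence of $M_\za$ in $\ma$, so $\xi \in \Hom(M_\za, M_\zb[1]) = \Ext^1(M_\za, M_\zb)$ is exactly the self-extension stated.

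The main obstacle will be the punctured intersection case. There Proposition \ref{prop:obj-in-derived-cat}(3) provides an infinite family of morphisms $\psi_{(\aaa,m)}$ parameterized by $m \in \mathbb{N}$, each with shift $g_0(p_2) - f_0(p_1) + mn$, and we must verify that the geometric definition of the weight of $(\aaa,m)$ matches this shift and that, as $m$ ranges, one obtains each non-negative degree exactly once (so that no basis element is lost or double-counted). This requires carefully tracking how the canonical gradings $f_0, g_0$ behave as one wraps around a puncture, using the local description in Figure \ref{figure:simple to projective} of how $\zD^*_p$ is derived from $\zD^*_s$, together with the fact that $f_0$ is non-positive and attains $0$. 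Once this bookkeeping is in place, the statement follows by collecting the contributions from all three types of oriented intersections.
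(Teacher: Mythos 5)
Your proposal is correct and follows essentially the same route as the paper: pass to the derived category via $\Ext^\omega(M_\za,M_\zb)\cong\Hom_{\kaa}(\P_\za,\P_\zb[\omega])$, use Theorem \ref{theorem:object} to identify $\P_{M_\za}$ with the canonical complex $\P_\za=\P_{(\za,f_0)}$, invoke Proposition \ref{prop:obj-in-derived-cat} for the basis, match the shift degrees $g_0(p_2)-f_0(p_1)$ (with the $+1$ or $+mn$ corrections) to the weights, and treat the exceptional cases via the identity map and the lifted Auslander--Reiten triangle. The only difference is one of emphasis: the paper carries out the explicit grading computation $f_0(p_1)=-w_q(\za)$, $g_0(p_2)=-w_q(\zb)$ in the boundary case (using Theorem \ref{thm:main-projective-res}) and treats the two interior cases as immediate from the definitions of the weights, whereas you single out the punctured case as the main bookkeeping obstacle --- a reconciliation (that $v-u$ in Definition \ref{definition:weighted intersections} equals $g_0(p_2)-f_0(p_1)$ reduced modulo $n$) which the paper in fact leaves implicit; note also that there are no oriented intersections between zigzag curves with negative weight, so nothing actually needs to be discarded.
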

\begin{proof}
Firstly we show that each oriented intersection from $\za$ to $\zb$ with weight $\omega$ gives rise to a morphism in $\Ext^\omega(M_\za,M_\zb)$.
When the oriented intersection arises from an interior intersection $q$, this directly follows from the equalities \eqref{equ:morph2} and \eqref{equ:morph3} and the definition of the weight of an oriented intersection given in Proposition-Definition \ref{definition:extension in interior} and Definition \ref{definition:weighted intersections} respectively.

Now we assume that $\aaa$ is an oriented intersection from $\za$ to $\zb$ arising from a boundary intersection $q$, whose weight is $\omega$.
Let $\bbp=\{\ell^*_1,\cdots,\ell^*_m\}$ be the polygon of $\zD^*_s$ which contains $q$, where the arcs in $\bbp$ are labeled clockwise, cf. the left picture in Figure
\ref{figure:simple to projective}.
%Assume that starting from $q$, $\za$ and $\ab$ firstly intersect $\ell^
Assume that starting from $q$, $p_1$ and $p_2$ are the first intersection points that $\za$ and $\zb$ respectively intersect with the arcs in $\zD^*_p$. Then we have $f_0(p_1)=-w_q(\za)$ and $g_0(p_2)=-w_q(\zb)$ by Corollary \ref{lem:proj-res2}. 
%We recall that $f_0$ and $g_0$ are respectively the gradings over $\za$ and $\zb$ such that $\P_\za=\P_{(\za,f_0)}$ and $\P_\zb=\P_{(\zb,g_0)}$. 
Then by the equality \eqref{equ:morph1}, there is a morphism from $\P_{(\za,f_0)}$ to $\P_{(\zb,g_0)}[g_0(p_2)-f_0(p_1)]=\P_{(\zb,g_0)}[w_q(\za)-w_q(\zb)]=\P_{(\zb,g_0)}[\omega]$. Such morphism corresponds to a morphism in $\Ext^\omega(M_\za,M_\zb)$, that is, the oriented intersection $\aaa$ gives rise to a morphism in $\Ext^\omega(M_\za,M_\zb)$.
%When $(\aaa,m)$ is an oriented intersection from $\za$ to $\zb$ arising from a puncture intersection $q$, whose weight is $\omega=(v-u)+mn$ given in Definition \ref{definition:weighted intersections}, the proof is similar.

The second part of the theorem that the morphisms form a basis of the extension space follows from Proposition \ref{prop:obj-in-derived-cat} and the one-to-one correspondence between the spaces $\Ext^{\omega}(M_\za,M_\zb)$ and $\Hom(\P_\za,\P_\zb[\omega])$. In particular, since both the projective dimension and the injective dimension of a band module are one, the AR-triangle starts at this band module can be lifted as an AR-triangle in the derived category, thus the statement (2) follows from the corresponding statement (2) of Proposition \ref{prop:obj-in-derived-cat}.
\end{proof}

\begin{remark}
To sum up, let $\za$ and $\zb$ be two zigzag curves which intersect at $q$. If $q$ is a boundary intersection, then there is only one morphism between $M_\za$ and $M_\zb$ arising from $q$. If $q$ is a non-punctured interior intersection, then there is a morphism and a 1-extension between $M_\za$ and $M_\zb$ arising from $q$. If $q$ is a puncture, then there are infinite many of morphisms or extensions between $M_\za$ and $M_\zb$ arising from $q$.

In particular, an extension arising from a non-punctured interior intersection is always an $1$-extension, while all the higher extensions arises from a boundary intersection or an intersection which is a puncture. We mention that this phenomenon is also pointed out in \cite{BS21}, where for gentle algebras arising from triangulations of surfaces, an explicit basis of higher extension spaces between indecomposable modules is given.
\end{remark}

\subsection{Higher Yoneda-extensions as distinguished polygons}\label{section:yonada}

In this subsection, we explain the higher Yoneda-extensions as some special polygons, distinguished polygons, on the surface, and explain the Yoneda-product as gluing of these polygons.

Let $(\cals,\calm,\zD^*_s)$ be a marked surface with a simple coordinate.
Let $\za$ and $\zb$ be two $\bpoint$-arcs on the surface sharing an endpoint $q$ in $\calm_{\bpoint}$. Assume that $q$ belongs to a polygon of $\zD^*_s$ such that $\za$ intersects an arc $\ell^*_{t}\in \zD^*_s$ and $\zb$ intersects an arc $\ell^*_{t+\omega}\in \zD^*_s$.
Denote by $\aaa$ the associated oriented intersection from $\za$ to $\zb$ whose weight is $w(\aaa)=\omega$.
We construct a $(\omega+2)$-gon $\bbp(\aaa)=\{\zg_0,\zg_1,\cdots,\zg_{\omega+1}\}$ in the following way, see the picture in Figure \ref{figure:Mhigher-extension}.

We write $\zb=\zg_0$ and $\za=\zg_{\omega+1}$.
Let $\zg_1$ be the $\bpoint$-arc obtained from smoothing $\zg_0$ and $\ell_{t+\omega-1}$ at $q$, and let $\zg_{\omega}$ be the $\bpoint$-arc obtained from smoothing $\zg_{\omega+1}$ and $\ell_{t+1}$ at $q$. For $2\leq i \leq \omega-1$, let $\zg_i$ be the $\bpoint$-arc obtained from smoothing $\ell_{t+\omega-i+1}$ and $\ell_{t+\omega-i}$ at $q$.
For any $1\leq i \leq \omega+1$, we denote the common endpoint of $\zg_{i-1}$ and $\zg_{i}$ by $p_{i}$.
Then it is clear that each $p_i, 1\leq i \leq \omega,$ gives rise to an oriented intersection $\bbb_i$ from $\zg_{i-1}$ to $\zg_i$ of weight zero. We denote the map in $\Hom(M_{\zg_{i-1}},M_{\zg_i})$ associated to $\bbb_i$ still by $\bbb_i$.

When $q$ is a puncture, we can similarly construct the $(\omega+2)$-gon $\bbp(\aaa)$.
The subtle thing is that the oriented intersection may wrap around the $q$ several times, so the associated polygon is a kind of covering, see Figure \ref{figure:Mhigher-extension2} for an example.
\begin{figure}
	\[\scalebox{1}{
		\begin{tikzpicture}[>=stealth,scale=0.6]
			\draw[red!50,thick] (-6,0)--(-5.2,3.3)--(-2,5);
			\draw[red!50,thick] (6,0)--(5.2,3.3)--(2,5);
			\draw[red!50,dashed,thick] (-2,5)--(2,5);
			
			\draw[red!50,thick] (-6,-0)--(-5.2,-3.3);
			\draw[red!50,dashed,thick] (-5.2,-3.3)--(0,-5);
			\draw[red!50,thick] (6,-0)--(5.2,-3.3);
			\draw[red!50,dashed,thick] (5.2,-3.3)--(0,-5);
			%\draw[red!50,dashed,thick] (-2,-5)--(2,-5);
			
			\draw[thick] (-6.5,2)--(0,0)--(6.5,2);
			\draw[dashed,thick,black!50] (-4,5)--(0,0)--(4,5);
			\draw[thick,dashed,black!50] (0,0)--(0,6);
			
			\draw[thick]plot [smooth,tension=1] coordinates {(-6.5,2) (-4.5,3) (-4,5)};
			\draw[thick]plot [smooth,tension=1] coordinates {(6.5,2) (4.5,3) (4,5)};
			\draw[thick]plot [smooth,tension=1] coordinates {(0,6) (-1.5,4) (-4,5)};
			\draw[thick]plot [smooth,tension=1] coordinates {(0,6) (1.5,4) (4,5)};
			
			\draw[thick,dashed,black!50] (-6.5,-2)--(0,0)--(6.5,-2);
			\draw[dashed,thick,black!50] (-4,-5)--(0,0)--(4,-5);
			%\draw[thick,dashed,black!50] (0,0)--(0,-6);
			
			\draw[thick]plot [smooth,tension=1] coordinates {(-6.5,-2) (-4.5,-3) (-4,-5)};
			\draw[thick]plot [smooth,tension=1] coordinates {(6.5,-2) (4.5,-3) (4,-5)};
			%\draw[thick]plot [smooth,tension=1] coordinates {(0,-6) (-1.5,-4) (-4,-5)};
			%\draw[thick]plot [smooth,tension=1] coordinates {(0,-6) (1.5,-4) (4,-5)};
			\draw[thick]plot [smooth,tension=1] coordinates {(-4,-5) (0,-3) (4,-5)};
			
			\draw[thick]plot [smooth,tension=1] coordinates {(-6.5,2) (-5,0) (-6.5,-2)};
			\draw[thick]plot [smooth,tension=1] coordinates {(6.5,2) (5,0) (6.5,-2)};
			
			\draw[red,thick,fill=red] (5.2,3.3) circle (0.08);
			\draw[red,thick,fill=red] (2,5) circle (0.08);
			\draw[red,thick,fill=red] (-5.2,3.3) circle (0.08);
			\draw[red,thick,fill=red] (-2,5) circle (0.08);
			
			\draw[red,thick,fill=red] (6,0) circle (0.08);
			\draw[red,thick,fill=red] (5.2,-3.3) circle (0.08);
			%			\draw[red,thick,fill=red] (2,-5) circle (0.08);
			\draw[red,thick,fill=red] (-6,0) circle (0.08);
			\draw[red,thick,fill=red] (-5.2,-3.3) circle (0.08);
			%			\draw[red,thick,fill=red] (-2,-5) circle (0.08);
			\draw[red,thick,fill=red] (0,-5) circle (0.08);
			
			\node at (-1.5,.8) {\tiny$\za$};
			\node at (1.5,.8) {\tiny$\zb$};
			%			\node[black!50] at (-1.6,3) {\tiny$\ell_{t+\omega-1}$};
			%			\node[black!50] at (1.5,3) {\tiny$\ell_{t+1}$};
			%			\node [red!50] at (-5,1) {\tiny$\ell^*_{t+\omega}$};
			%			\node [red!50] at (6.3,1.2) {\tiny$\ell^*_{t}$};
			%			\node [red!50] at (4.8,4.2) {\tiny$\ell^*_{t+1}$};
			\node at (-3.4,3.2) {\tiny$\zg_{\omega-n}$};
			\node at (-3.92,2.6) {\tiny$\zg_{\omega+1}$};
			\node at (3.6,3.2) {\tiny$\zg_1$};
			\node at (3.92,2.6) {\tiny$\zg_{n+1}$};
			
			\node [] at (4.4,0) {\tiny$\zg_{n}$};	
			\node [] at (-3.9,0) {\tiny$\zg_{\omega-n+1}$};	
			
			\draw[thick,fill=white] (0,0) circle (0.08);
			\draw[thick,->]plot [smooth,tension=1] coordinates {(-.45,.15)(.1,.5)(.5,-.1)(-.1,-.7)(-.9,0)(-.1,.85)(.8,.245)};
			\node [] at (0,-1.2) {\tiny$\aaa$};
			\node at (0,-.45) {\tiny$q$};
			
			\draw[thick,fill=white] (4,5) circle (0.08);
			%			\node [] at (4.3,5.3) {\tiny$p_2$};
			\draw[thick,fill=white] (-4,5) circle (0.08);
			%			\node [] at (-4.3,5.3) {\tiny$p_{\omega}$};

			\draw[thick,fill=white] (-6.5,2) circle (0.08);

			\draw[thick,fill=white] (6.5,2) circle (0.08);
			\draw[thick,bend left,->](5,1.52)to(5,2.52);
			\node [] at (4.2,2) {\tiny$\bbb_1$};
			
			\draw[thick,bend left,->](-5.31,2.34)to(-5.5,1.12);
			\node [] at (-4.2,.9) {\tiny$\bbb_{\omega-n+1}$};	
			\draw[thick,bend left,->](-5,2.52)to(-5,1.52);
			\node [] at (-4,2) {\tiny$\bbb_{\omega+1}$};
			
			\draw[thick,bend left,->](5.5,1.12)to(5.31,2.34);
			\node [] at (4.56,.9) {\tiny$\bbb_{n+1}$};	
			
			%			\node [] at (6.4,2.6) {\tiny$p_1$};
			
			\draw[thick,fill=white] (6.5,-2) circle (0.08);			
			\draw[thick,fill=white] (-6.5,-2) circle (0.08);			
			\draw[thick,fill=white] (0,6) circle (0.08);			
			\draw[thick,fill=white] (4,-5) circle (0.08);			
			\draw[thick,fill=white] (-4,-5) circle (0.08);	
	\end{tikzpicture}}\]
	\begin{center}
		\caption{A distinguished $(\omega+2)$-gon $\bbp(\aaa)$ corresponds to an oriented intersection $\aaa$ from $\za$ to $\zb$ with weight $\omega$, where $q$ is a puncture and there are $n$ $\rpoint$-arcs in the polygon of $\Delta^*_s$ that $q$ belongs to.}\label{figure:Mhigher-extension2}
	\end{center}
\end{figure}

Note that the edges of a polygon (in a classical sense) are simple curves, however, as the edges of a $(\omega+2)$-gon $\bbp(\aaa)$, the arcs $\za$ and $\zb$ may have self-intersections, thus $\bbp(\aaa)$ is not necessarily a `real polygon'.
We call $\bbp(\aaa)$ a {\em distinguished polygon} on the surface, where the name is justified by the following

\begin{theorem}\label{theorem:yoneda}
	Let $\za$ and $\zb$ be two $\bpoint$-arcs on a marked surface $(\cals,\calm,\zD^*_s)$ which share a common endpoint $q\in \calm_{\bpoint}\cup \calp_{\bpoint}$. Denote by $\aaa$ the associated oriented intersection from $\za$ to $\zb$ with weight $w(\aaa)=\omega$, which gives rise to a map in $\Ext^\omega(M_\za,M_\zb)$. Then under above notations, the Yoneda $\omega$-extension $[\aaa]$ corresponding to the map $\aaa$ is as follows:
	
	\begin{gather}\label{eq:yoneda}
		\xymatrix@C=0.7cm
		{[\aaa]=0\ar[r]^{} & M_{\zb}\ar[r]^{\bbb_1} & M_{\zg_1}\ar[r]^{\bbb_2} & \cdots \ar[r]^{} & M_{\zg_{\omega}}\ar[r]^{\bbb_{\omega+1}} & M_{\za}\ar[r]& 0}.
	\end{gather}
\end{theorem}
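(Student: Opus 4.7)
The plan is to proceed by induction on $\omega$, translating between the module category and the derived category via Theorem \ref{theorem:object}, and invoking the cone-via-smoothing correspondence from \cite[Theorem 4.1]{OPS18} that was already used in Propositions \ref{proposition:object-any arc} and \ref{proposition:object-closed curve}.

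For the base case $\omega=1$, the polygon $\bbp(\aaa)$ is the triangle $\{\zb,\zg_1,\za\}$, and $\zg_1$ is by construction the smoothing of $\zb$ and $\za$ at $q$. The oriented intersection $\aaa$ yields a distinguished triangle $\P_\zb\to\P_{\zg_1}\to\P_\za\xrightarrow{\psi_\aaa}\P_\zb[1]$ in $\dba$; since all three terms are minimal projective resolutions of modules (Theorem \ref{theorem:object}), taking $H^0$ produces the short exact sequence $0\to M_\zb\to M_{\zg_1}\to M_\za\to 0$, which represents $[\aaa]$. The two nontrivial maps must coincide with $\bbb_1$ and $\bbb_2$, since these are the unique (up to scalar) weight-$0$ module morphisms at $p_1$ and $p_2$ respectively.

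For the inductive step $\omega\geq 2$, consider the weight-$(\omega-1)$ oriented intersection $\aaa'\colon\za\to\zg_1$ at $q$, whose distinguished polygon is $\bbp(\aaa')=\{\zg_1,\zg_2,\ldots,\zg_\omega,\za\}$, obtained from $\bbp(\aaa)$ by deleting the edge $\zb=\zg_0$. By the inductive hypothesis, $[\aaa']$ is realized by the $(\omega-1)$-extension $0\to M_{\zg_1}\xrightarrow{\bbb_2}M_{\zg_2}\to\cdots\xrightarrow{\bbb_{\omega+1}}M_\za\to 0$. Independently, the weight-$0$ boundary intersection at $p_1$ gives the module morphism $\bbb_1\colon M_\zb\to M_{\zg_1}$, and the description of composition in $\dba$ from Proposition \ref{prop:obj-in-derived-cat} shows that $\psi_\aaa$ factors through $\P_{\zg_1}[\omega-1]$ as a composition built out of $\psi_{\aaa'}$ and a shift of $\psi_{\bbb_1}$. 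Translating this factorization back to $\Ext$, one obtains $[\aaa]$ by prepending the short exact sequence $0\to M_\zb\xrightarrow{\bbb_1} M_{\zg_1}\to\mathrm{coker}(\bbb_1)\to 0$ and splicing at the identification $\mathrm{coker}(\bbb_1)=\ker(\bbb_2)\subseteq M_{\zg_1}$.

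The main obstacle is the combinatorial identification $\mathrm{im}(\bbb_1)=\ker(\bbb_2)$ inside $M_{\zg_1}$, which guarantees that the spliced middle term is $M_{\zg_1}$ itself rather than a Yoneda-equivalent surrogate. This reduces to a string-combinatorial fact: the strings $\omega(\zb)$, $\omega(\zg_1)$, $\omega(\zg_2)$ arise from successive smoothings at $q$ in the polygon of $\zD^*_s$ containing $q$, each extending the previous by a single arrow of $Q(\zD^*_s)$, so that the induced submodule/quotient structure of $M_{\zg_1}$ matches precisely the start of the claimed exact sequence. Once this coincidence is verified, together with the matching of the Yoneda product with the derived composition rule, the induction closes and the full $\omega$-extension is identified with $[\aaa]$.
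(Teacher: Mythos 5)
Your base case and overall strategy (induct on $\omega$, translate via Theorem \ref{theorem:object}, use the smoothing--cone correspondence from \cite[Theorem 4.1]{OPS18}) are sound, but the inductive step as written contains a genuine gap. You introduce ``the weight-$(\omega-1)$ oriented intersection $\aaa'\colon\za\to\zg_1$ at $q$,'' but this intersection does not exist: by construction $\zg_1$ is the smoothing of $\zg_0=\zb$ and $\ell_{t+1}\in\zD_s$ at $q$, so $\zg_1$ runs from $p_1$ to $p_2$ and no longer has $q$ as an endpoint. Correspondingly, removing the edge $\zb$ from $\bbp(\aaa)$ does not produce another distinguished polygon; the distinguished polygon one actually obtains by factoring at $q$ through the intermediate arc is $\{\ell_{t+1},\zg_2,\dots,\zg_\omega,\za\}$, not $\{\zg_1,\zg_2,\dots,\zg_\omega,\za\}$. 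The inductive extension therefore begins at $M_{\ell_{t+1}}$, not at $M_{\zg_1}$, and its first differential is \emph{not} $\bbb_2$ (that map has nontrivial kernel $\mathrm{im}(\bbb_1)$ in the target extension, whereas the inductive one is injective). Finally, the morphism $\bbb_1\colon M_\zb\to M_{\zg_1}$ arising from $p_1$ has weight $0$; it is a map in $\Hom$, not a class in $\Ext^1$, so it cannot serve as the degree-raising factor in a Yoneda product $\Ext^{\omega-1}\times\Ext^1\to\Ext^\omega$, and the proposed factorization $\psi_\aaa=\psi_{\aaa'}\circ(\text{shift of }\psi_{\bbb_1})$ does not typecheck: you need a morphism $\P_{\zg_1}\to\P_\zb[1]$, but $\psi_{\bbb_1}$ goes $\P_\zb\to\P_{\zg_1}$.

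The repair is precisely what the paper does (explicitly for $\omega=2$, with the remark that the general case is analogous): factor $\aaa$ at $q$ through an arc of $\zD_s$ sharing the endpoint $q$, namely $\ell=\ell_{t+1}$. One gets $\aaa_1\colon\za\to\ell$ of weight $\omega-1$ and $\aaa_2\colon\ell\to\zb$ of weight $1$, both at $q$, with Yoneda extensions $[\aaa_1]\colon 0\to M_\ell\to M_{\zg_2}\to\cdots\to M_\za\to 0$ and $[\aaa_2]\colon 0\to M_\zb\xrightarrow{\bbb_1} M_{\zg_1}\to M_\ell\to 0$. Splicing at $M_\ell$ (not at $M_{\zg_1}$) gives the sequence in \eqref{eq:yoneda}, the middle map $\bbb_2$ being the composite $M_{\zg_1}\to M_\ell\to M_{\zg_2}$, which coincides with the minimal oriented intersection $\bbb_2$ by the composition rule for oriented intersections. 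The identity $\aaa_1*\aaa_2=\aaa$ is then the compatibility of Yoneda product with composition in $\dba$, as in \cite[Remark 2.6]{CS23}. Once you replace $\zg_1$ by $\ell_{t+1}$ in your inductive set-up, the ``main obstacle'' $\mathrm{im}(\bbb_1)=\ker(\bbb_2)$ becomes exactly the exactness of the spliced sequence at $M_{\zg_1}$, and the rest of your outline goes through.
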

\begin{proof}
	We prove the case for $\omega=2$. The general case can be inductively proved with respect to $\omega$ by using a similar method.
	
	Denote $\ell_{t+1}$ by $\ell$ for simplicity. We denote by $\aaa_1$ and $\aaa_2$ respectively the oriented intersection from $\za$ to $\ell$ and the oriented intersection from $\ell$ to $\zb$ arising from the intersection $q$. Then both of the weights $\omega(\aaa_1)$ and $\omega(\aaa_2)$ are one, which give rise to two morphisms, still denoted by $\aaa_1$ and $\aaa_2$, in $\Ext^1(M_\za,M_\ell)$ and $\Ext^1(M_\ell,M_\zb)$ respectively. It is straightforward to see that the Yoneda $1$-extension associated to $\aaa_1$ and $\aaa_2$ are respectively
	$$\xymatrix@C=0.7cm
	{[\aaa_1]=0\ar[r]^{} & M_{\ell}\ar[r]^{\bbb_2''} & M_{\zg_{2}}\ar[r]^{\bbb_3} & M_{\za}\ar[r]& 0}~~~\text{ and}$$
	$$\xymatrix@C=0.7cm
	{[\aaa_2]=0\ar[r]^{} & M_{\zb}\ar[r]^{\bbb_1} & M_{\zg_{1}}\ar[r]^{\bbb_2'} & M_{\ell}\ar[r]& 0},$$
	where $\bbb_2'$ and $\bbb_2''$ are the natural morphisms arising from the canonical boundary intersections obtained from smoothing of arcs.
	Denote by $\aaa_1*\aaa_2$ the Yoneda product of $\aaa_1$ and $\aaa_2$ in $\Ext^2(M_\za,M_\zb)$. Then the Yoneda $2$-extension associated to $\aaa_1*\aaa_2$ is
	\begin{gather}\label{eq:yoneda1}
		\xymatrix@C=0.7cm
		{[\aaa_1*\aaa_2]=0\ar[r]^{} & M_{\zb}\ar[r]^{\bbb_1} & M_{\zg_1}\ar[r]^{\bbb_2}& M_{\zg_{2}}\ar[r]^{\bbb_3} & M_{\za}\ar[r]& 0}.
	\end{gather}
	Since the composition of oriented intersections corresponds to the composition of associated morphisms, see for example in \cite[Remark 2.6]{CS23b}, thus we have equality of morphisms $\bbb_2=\bbb_2'\bbb_2''$, noticing that $\bbb_2=\bbb_2'\bbb_2''$ as oriented intersections.
	On the other hand, the Yoneda-product $\aaa_1*\aaa_2$ corresponds to the composition $\aaa_1(\aaa_2[1])$ in $\Hom(\P_\za,\P_\zb[2])$, which equals the map $\aaa$ again by \cite[Remark 2.6]{CS23b}. Thus \eqref{eq:yoneda1} is exactly the Yoneda extension associated to the map $\aaa$ in $\Ext^2(M_\za,M_\zb)$.
\end{proof}

\begin{remark}\label{remark:yoneda}
	The above theorem shows that the distinguished $(\omega+2)$-gon $\bbp(\aaa)$ represents a Yoneda $\omega$-extension of a map in $\Ext^\omega(M_\za,M_\zb)$.
	Furthermore, the Yoneda product can be explained as gluing of these polygons. More precisely, let $\aaa$ and $\bbb$ be two maps in  $\Ext^\omega(M_\za,M_\zb)$ and $\Ext^\delta(M_\zb,M_\zg)$ arising from a common marked point on the boundary. Let $\bbp(\aaa)$ and $\bbp(\bbb)$ be the polygons associated to $[\aaa]$ and $[\bbb]$ respectively. Then $\aaa\bbb$ is a non-zero map in $\Ext^{\omega+\delta}(M_\za,M_\zg)$, and the polygon associated to the Yoneda extension is the $(\omega+\delta+2)$-gon $\bbp(\aaa\bbb)$, which is obtained by gluing $\bbp(\aaa)$ and $\bbp(\bbb)$ along the arc $\zb$.

	Note that by using the string combinatoric calculation, the 1-extensions between string modules are described as so-called {\it overlap extensions} and {\it arrow extensions}, see for example in \cite{BDMTY20,CPS21,CS17,S99,Z14}. Note that for an overlap extension, the middle term of the associated exact sequence is a direct sum of two indecomposable string modules, while for an arrow extension, the middle term is indecomposable. 
	Therefore in the language of the surface model, an overlap extension corresponds to an oriented non-punctured interior intersection, while an arrow extension corresponds to an oriented intersection at a boundary/punctured-marked point.
\end{remark}

\section{A geometric model for the algebraic hearts of a gentle algebra}\label{section:heart}

In this section, by using the geometric model for the module category of a gentle algebra established in above section, we will give a geometric model for any algebraic heart in the derived category of a gentle algebra, where such a heart is always equivalent to the module category of a gentle algebra.

In the rest of the paper, let $A$ be a gentle algebra arising from a marked surface $(\cals,\calm,\zD^*_A)$, where $\zD^*_A$ is a projective coordinate. Here we leave the notations $\zD^*_p$, $\zD_s^*$, $\zD_p$ and $\zD_s$ to represent arbitrary (rather than the initial) coordinates and dissections respectively on $(\cals,\calm)$.

We introduce a notation which will be used in the following. Let $\za$ be an $\bpoint$-arc and let $q$ be an endpoint of $\za$, we denote by $s^\za_{q}$ the intersection in $\za \cap \zD_A^*$ which is nearest to $q$, where $\za \cap \zD_A^*$ is the set of intersections between $\za$ and the arcs in $\zD^*_A$.

\begin{definition}\label{definition:silting dissection}
A \emph{simple-minded dissection} of $(\cals,\calm,\zD^*_A)$ is a pair $(\zD_s,f)$, where $\zD_s$ is a simple dissection of $(\cals,\calm)$ and $f=\{f_\za, \za\in \zD_s\}$ is a set of gradings of arcs from $\zD_s$ such that
$$f_\za(s^\za_{q}) < f_\zb(s^\zb_{q}),$$
whenever $q$ is a common endpoint of arcs $\za$ and $\zb$ in $\zD_s$ such that $\zb$ follows $\za$ clockwise at $q$.
\end{definition}

Then we have the following geometric description of simple-minded collections in $\dba$, which can be proved similar to the proof of \cite[Theorem 3.2]{APS23} and its converse \cite[Proposition 3.7]{APS23}, where a geometric characterization for silting objects is given, noticing that there is a kind of duality between simple-minded collections in $\dba$ and silting objects in $\ka$, see details in \cite[Section 5]{KY14}.
For convenience of the reader, we provide a sketch of the proof after the theorem.

\begin{theorem}\label{thm:simple minded dissection}
Let $(\zD_s,f)$ be a simple-minded dissection of $(\cals,\calm,\zD^*_A)$.
Then
$$\P_{(\zD_s,f)}:=\{\P_{(\za,f_\za)}, (\za,f_\za)\in (\zD_s,f)\}$$
is a simple-minded collection in $\dba$.

Conversely, any simple-minded collection in $\dba$ arises from this way. That is, there is a bijection from the set of homotopy classes of simple-minded dissections on $(\cals,\calm,\zD^*_A)$ to the set of isomorphism classes of simple-minded collections in $\dba$.
\end{theorem}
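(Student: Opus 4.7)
The plan is to adapt the argument for the silting/projective-dissection correspondence in \cite[Theorem~3.2, Proposition~3.7]{APS23} to the dual simple-minded/simple-dissection setting, using the geometric description of $\Hom$-spaces recorded in Proposition~\ref{prop:obj-in-derived-cat}. For the forward direction, given $(\zD_s, f)$, I would verify the three defining properties of a simple-minded collection for $\P_{(\zD_s, f)}$. The key point is that, since $\zD_s$ consists of simple arcs with no interior intersections, every oriented intersection between two distinct arcs $\za, \zb \in \zD_s$ arises at a shared endpoint in $\calm_\bpoint \cup \calp_\bpoint$. At a common boundary endpoint $q$ where $\zb$ clockwise follows $\za$, formula \eqref{equ:morph1} places the associated morphism in degree $f_\zb(s^\zb_q) - f_\za(s^\za_q) \geq 1$ by Definition~\ref{definition:silting dissection}; at a shared puncture of valence $n$ in $\zD^*_A$, formula \eqref{equ:morph3} gives degree $f_\zb(s^\zb_q) - f_\za(s^\za_q) + mn$ for $m \geq 0$, again strictly positive. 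Together these force $\Hom(S_i, \Sigma^{m} S_j) = 0$ for $m < 0$ and $\Hom(S_i, S_j) = 0$ for $i \neq j$. Simplicity of each $\za$ and algebraic closedness of $k$ yield $\End(\P_{(\za, f_\za)}) = k$, completing conditions (1)-(2). Generation follows because $|\zD_s|$ equals the number of vertices of $Q(\zD^*_A)$, hence the rank of $K_0(\dba)$, and a simple-minded family of the correct cardinality automatically generates by \cite[Proposition~5.4]{KY14}.

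For the converse, let $\{S_1, \ldots, S_n\} \subset \dba$ be a simple-minded collection. By the classification in Subsection~\ref{subsection: derived categories and derived categories}, each $S_i$ is an indecomposable graded homotopy string or band object; band objects are ruled out by the generation requirement (their thick closure is a proper subcategory controlled by the AR-tube containing them, incompatible with reaching all of $\dba$ from the remaining string objects). Writing $S_i = \P_{(\za_i, g_i)}$ and setting $\zD_s = \{\za_i\}$, $f = \{g_i\}$, I would check $(\zD_s, f)$ is a simple-minded dissection. Absence of interior intersections between $\za_i$ and $\za_j$ follows from \eqref{equ:morph2}--\eqref{equ:morph22}: the two oriented intersections there produce morphisms whose degrees sum to $1$, so one lies in non-positive degree, contradicting the simple-minded conditions; absence of interior self-intersections follows similarly from $\End(S_i) = k$. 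The tiling property (that $\zD_s$ cuts $\cals$ into polygons each containing exactly one $\rpoint$-point) then reduces to an Euler-characteristic count using the cardinality $n$ and the absence of interior intersections, parallel to \cite[Proposition~3.7]{APS23}. Finally, the grading inequality in Definition~\ref{definition:silting dissection} is obtained by reading \eqref{equ:morph1} in reverse: strict positivity of the relevant degree, which is forced by the simple-minded conditions, is exactly $f_\za(s^\za_q) < f_\zb(s^\zb_q)$.

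I expect the main obstacle to be the band-exclusion step in the converse, where a pure $\Hom$-computation does not suffice: a quasi-simple band object $\P_{(\za, g, \lambda, 1)}$ already has $\End = k$ and satisfies the negative $\Hom$-vanishing, so one must use the generation axiom together with the internal structure of the AR-tubes of band objects in $\dba$. The cleanest approach is likely to leverage the arc-twist duality with \cite[Theorem~3.2]{APS23}, so that the band exclusion for SMCs becomes dual to the already-established fact that no silting object in $\dba$ has a band summand. The topological tiling argument in the converse is the other delicate point, but it follows the same Euler-characteristic template used for projective dissections.
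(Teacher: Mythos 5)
Your proposal takes the same route the paper itself gestures at (adapting the arguments of \cite[Theorem~3.2, Proposition~3.7]{APS23} via the twist/Koszul duality between simple-minded collections and silting objects), and the backbone of the argument — using the absence of interior intersections of arcs in $\zD_s$ together with Proposition~\ref{prop:obj-in-derived-cat} to control $\Hom$-degrees, and working backwards via \eqref{equ:morph2}--\eqref{equ:morph22} and an Euler-characteristic count for the converse — is sound and matches what one would extract from the APS23 template.

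There is, however, a genuine gap in your treatment of the generation axiom in the forward direction. You assert that ``a simple-minded family of the correct cardinality automatically generates by \cite[Proposition~5.4]{KY14},'' but that proposition \emph{assumes} a simple-minded collection (including generation) and produces a $t$-structure; it does not say that an orthogonal family of the right size generates. Nor is the cardinality observation alone sufficient: a Hom-orthogonal family of $|Q_0|$ string objects need not generate $\dba$ unless one also uses the dissection (tiling) property of $\zD_s$. The standard repairs are either (i) observe that simple dissections are connected under flips, that flips correspond to mutations of the candidate collections, that mutation preserves the property of being a simple-minded collection, and that the initial simple dissection $t^{-1}(\zD_A^*)^{*}$ corresponds to the collection of simple $A$-modules which does generate; or (ii) pass through the twist $t$ to a projective dissection, invoke \cite[Theorem~3.2]{APS23} to get a silting object in $\ka$, and then use the Koenig--Yang bijection to transport generation across the SMC/silting correspondence. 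Either route uses the tiling hypothesis in an essential way, whereas your sketch does not. Your worry about band exclusion in the converse is well placed (a quasi-simple band object does pass the $\Hom$-vanishing and $\End$ tests), and the silting-dual argument you suggest for it is indeed the cleanest fix; the degree bookkeeping at punctures in the forward direction is also a bit quick, since the relation between $g(p_2)-f(p_1)$ (computed on the infinitely-wrapped arc, reduced modulo the valence $n$) and $f_\zb(s^\zb_q)-f_\za(s^\za_q)$ deserves an explicit check, but these are matters of detail rather than of strategy.
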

\begin{proof}[{\bf Sketch of the proof:} ]
Assume we have a simple-minded dissection $(\zD_s,f)$, then any morphism arises from an oriented intersection from an arc $\za$ to an arc $\zb$ at a common endpoint $q$. Since we have $f_\za(s^\za_{q}) < f_\zb(s^\zb_{q}),$ the degree of this morphism is $f_\zb(s^\zb_{q})-f_\za(s^\za_{q})$, which is positive. Thus the conditions (1) and (2) in the definition of a simple-minded collection hold, recall the definition given in subsection 
\ref{subsection: t-structures and hearts}. 
To show the condition (3) holds, that is, $\dba$ can be generated by objects $\P_{(\za,f_\za)}, (\za,f_\za)\in (\zD_s,f)$, it suffices to show that a shift  $\P_{(\zg,g)}$ of a projective resolution of each simple module $M_{\zg}$ can be generated by these objects. Since smoothing of arcs corresponds to taking the cones of morphisms between the associated objects (up to a shift) in $\dba$ \cite[Theorem 4.1]{OPS18}, it is only need to show that $\zg$ can be obtained by iteratively smoothing of arcs in $\zD_s$.
To see this, note that the arcs in $\zD_s$ cut the surface into polygons. The arc $\zg$ crossing a polygon is homotopic to a smoothing of some of the segments forming the boundary of this polygon; applying this to all polygons crossed by $\zg$, we get that $\zg$ is a smoothing of the $\za$.

Conversely, if we have a simple-minded collection $\{S_1,\ldots,S_n\}$, then each object $S_i$ corresponds a graded $\bpoint$-arc $\za_i$. Note that there is no interior intersection between $\za_i$ and $\za_j$ for any $1\leqslant i,j \leqslant n$. Otherwise, there are two morphisms arising from the intersection with degrees $d$ and $1-d$, cf. \eqref{equ:morph2} and \eqref{equ:morph22}, which contradicts the conditions (1) and (2). Therefore the arcs can only intersect at the $\bpoint$-points. Furthermore, by a similar argument used in \cite[Lemma 5.6]{APS23}, the conditions (1) and (2) imply that the arcs $\za_i$ do not enclose any unpunctured surface. On the other hand, since there are $n$ arcs $\za_i$, they cut the surface into polygons each which has exactly one $\rpoint$-point. Therefore the  arcs $\za_i, 1\leqslant i \leqslant n,$ form a simple dissection.
Finally, the inequality in the definition of a simple-minded dissection follows from the conditions (1) and (2).
\end{proof}

Recall that a minimal oriented intersection in a simple coordinate is an anti-clockwise angle, see subsection \ref{subsection: Modules as curves}.
Now dually, we introduce a minimal oriented intersection for a simple dissection.
More precisely, a \emph{minimal oriented intersection}
on a simple dissection $\zD_s$ is a \emph{clockwise} angle locally from an arc $\ell_i$ to an arc $\ell_j$ in $\zD_s$ at a common endpoint, such that the angle is in the interior of the surface and there is no other arcs from $\zD_s$ in between.

Let $\zD_s$ be a simple dissection with a dual simple coordinate $\zD_s^*$ on the surface. Then note that for any minimal oriented intersection $\aaa$ of $\zD_s$ from $\ell_i$ to $\ell_j$, there is a unique minimal oriented intersection $\aaa^{*}$ of $\zD^*_s$ from $\ell_i^*$ to $\ell_j^*$, see Figure \ref{figure:graded dual algebra}.
Furthermore, this establishes bijections between the set of minimal oriented intersections on $\zD_s$ and the set of minimal oriented intersections on $\zD^*_s$.
We call $\aaa^{*}$ the \emph{dual} of $\aaa$.

\begin{definition}\label{definition:graded}
Let $A$ be a gentle algebra associated with a surface model $(\cals,\calm,\zD^*_A)$.
Let $(\zD_s,f)$ be a simple-minded dissection on $(\cals,\calm,\zD^*_A)$.
\begin{enumerate}[\rm(1)]
\item Let $F$ be a map from the set of minimal oriented intersections of $\zD_s$ to the integer set $\Z$, such that for each minimal oriented intersection $\aaa: \ell_i\rightarrow \ell_j$ arising from a common endpoint $q$ of $\ell_i$ and $\ell_j$, we have
$$F(\aaa)=f_{\ell_j}(s^{\ell_j}_q)-f_{\ell_i}(s^{\ell_i}_q).$$
We call $F(\aaa)$ the \emph{grading} of $\aaa$, and call the pair $(\zD_s,F)$ the \emph{graded simple dissection} induced from $(\zD_s,f)$.
\item Let $F^*$ be a map from the set of minimal oriented intersections of $\zD_s^*$ to the integer set $\Z$, such that for each minimal oriented intersection $\aaa^{*}$, we have
$$F^*(\aaa^{*})=1-F(\aaa),$$ where $\aaa^{*}$ is the dual of $\aaa$.
We call the pair $(\zD^*_s,F^*)$ the \emph{graded simple coordinate} induced from $(\zD_s,f)$.
\end{enumerate}
\end{definition}

Let's explain why we choose gradings as above. 
The first grading determined by $F$ actually maps into the set $\mathbb{Z}_{\geq 1}$,
which corresponds to simple objects in hearts having no non-positive extensions
between them. 
The dual grading $F^*$ is non-positive, meaning that the algebra $\tilde{\zG}$ we
are getting is going to arise from a silting object, see more details in the proof of Theorem \ref{theorem: last}.

\begin{figure}
 \[\scalebox{1}{
\begin{tikzpicture}[>=stealth,scale=0.6]
\draw [thick,bend right] (0,4.6)to(4,4.6);

\draw [thick] (-1.3,3.8)--(2,0);
\draw [thick](5.3,3.8)--(2,0);
\draw [thick,red] (-1.3,.2)--(2,4);
\draw [thick,red](5.3,.2)--(2,4);

\draw [thick,bend left,->](1.7,.35)to(2.3,.35);
\node at (2,.8) {\tiny$\aaa$};
\draw [thick,bend left,<-,red] (2.3,3.65)to(1.7,3.65);
\node[red] at (2,3.2) {\tiny$\aaa^{*}$};

\node at (1.2,.5) {\tiny$\ell_i$};
\node at (3,.5) {\tiny$\ell_j$};
\node[red] at (-1.5,-.3) {\tiny$\ell^*_i$};
\node[red] at (5.5,-.3) {\tiny$\ell^*_j$};
\node at (2,-.4) {\tiny$q$};
\node[red] at (2,4.4) {\tiny$q^*$};

\draw[red,thick,fill=red] (2,4) circle (0.1);
\draw[thick,fill=white] (2,0) circle (0.1);
\end{tikzpicture}
}\]
\begin{center}
\caption{The dual $\aaa^{*}$ of a minimal oriented intersection $\aaa$ of $\zD_s$. We set $F^*(\aaa^{*}) = 1-F(\aaa)$ for the gradings.}\label{figure:graded dual algebra}
\end{center}
\end{figure}

\begin{definition}\label{definition:graded2}
Let $A$ be a gentle algebra associated with a surface model $(\cals,\calm,\zD^*_A)$.
Let $(\zD_s,f)$ be a simple-minded dissection on $(\cals,\calm,\zD^*_A)$, whose induced graded-simple coordinate is $(\zD^*_s,F^*)$.
\begin{enumerate}[\rm(1)]
\item A \emph{zigzag curve} on the quadruple $(\cals,\calm,\zD^*_s,F^*)$ is a zigzag curve on $(\cals,\calm,\zD^*_s)$ which is compatible with the gradings of $\zD^*_s$, that is, $F^*(\aaa^*)=0$ for any minimal oriented intersection $\aaa^*$ that the curve goes through. We call an $\bpoint$-arc a \emph{zigzag arc} if it is a zigzag curve on $(\cals,\calm,\zD^*_s,F^*)$. We call a (primitive) closed curve a \emph{closed zigzag curve} if it is a zigzag curve on $(\cals,\calm,\zD^*_s,F^*)$.
\item The algebra of the graded-simple coordinate $(\zD^*_s,F^*)$ is defined to be the graded algebra $\tilde{\zG}=A(\zD^*_s)$ with the grading induced by the grading $F^*$, that is, the grading of each arrow $\aaa^*\in Q(\zD^*_s)$ equals $F^*(\aaa^*)$. Denote by $\zG$ the subalgebra of $\tilde{\zG}$ consisting of elements with zero gradings.
\end{enumerate}
\end{definition}

\begin{lemma}
The algebras $\tilde{\zG}$ and $\zG$ are gentle algebras.
\end{lemma}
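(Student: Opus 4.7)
The plan is to show gentleness of $\tilde{\zG}$ essentially by construction, then to argue that $\zG$ is the path algebra of a subquiver of $Q(\zD^*_s)$ modulo the induced relations, and that this inherits gentleness from $\tilde{\zG}$.

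First, as an ungraded algebra $\tilde{\zG} = A(\zD^*_s)$, which is gentle by the general correspondence between simple coordinates and gentle algebras recalled in subsection \ref{subsection: Modules as curves}. Introducing a grading on the arrows does not alter the algebraic structure, so $\tilde{\zG}$ is gentle.

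The crucial step for $\zG$ is to observe that every arrow of $\tilde{\zG}$ has \emph{non-positive} degree. Indeed, an arrow $\aaa^* : \ell^*_i \to \ell^*_j$ of $Q(\zD^*_s)$ is, by Definition \ref{definition:graded}, the dual of a minimal oriented intersection $\aaa : \ell_i \to \ell_j$ on $\zD_s$ at some common endpoint $q$. Since $\ell_j$ clockwise follows $\ell_i$ at $q$, the simple-minded dissection inequality in Definition \ref{definition:silting dissection} gives
\[
F(\aaa) \;=\; f_{\ell_j}(s^{\ell_j}_q) - f_{\ell_i}(s^{\ell_i}_q) \;\geq\; 1,
\]
so $F^*(\aaa^*) = 1 - F(\aaa) \leq 0$. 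Consequently, a nonzero path in $\tilde{\zG}$ has total degree $0$ if and only if each of its arrows has degree $0$, and hence
\[
\zG \;=\; kQ_0/I_0,
\]
where $Q_0$ is the subquiver of $Q(\zD^*_s)$ on the same vertex set obtained by retaining only the degree-zero arrows, and $I_0 = I(\zD^*_s) \cap kQ_0$.

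It then remains to verify the three conditions of Definition \ref{definition:gentle algebras} for $(Q_0, I_0)$. Condition (1) is automatic because $Q_0$ is a subquiver of $Q(\zD^*_s)$. Since $I(\zD^*_s)$ is generated by paths of length two, $I_0$ is generated by those length-two relations of $\tilde{\zG}$ whose both arrows lie in $Q_0$, establishing condition (3). For condition (2), the gentleness properties of an arrow $\aaa^* \in Q_0$ already hold in $\tilde{\zG}$: there is at most one arrow $\bbb^*$ with $\aaa^*\bbb^* \in I(\zD^*_s)$, at most one with $\aaa^*\bbb^* \notin I(\zD^*_s)$, and similarly on the left. Restricting to arrows lying in $Q_0$ clearly preserves the ``at most one'' clauses, and no new composites appear, since a composable pair in $Q_0$ is already composable in $Q(\zD^*_s)$ with the same value in $I_0$ or $kQ_0 \setminus I_0$.

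The main obstacle is the non-positivity of the grading on arrows; without it, the degree-zero subalgebra might fail to be a path algebra on a subquiver, and could in principle carry more exotic generators coming from degree cancellations in longer paths. Once this is in hand via the defining inequality of a simple-minded dissection, gentleness of $\zG$ is a routine inheritance from $\tilde{\zG}$.
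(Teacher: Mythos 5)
Your proof is correct and follows the same route as the paper: $\tilde{\zG}$ is gentle because $\zD^*_s$ is a simple coordinate, and $\zG$ is gentle because deleting the nonzero-degree arrows (and the relations involving them) from a gentle quiver with relations yields another one. You usefully spell out a step the paper leaves implicit here but invokes later in the proof of Theorem \ref{theorem: last}: that $F^*$ is non-positive on arrows (forced by the simple-minded dissection inequality), so a nonzero path has degree zero only when each of its arrows does, which is exactly what justifies identifying $\zG$ with the subquiver algebra on the degree-zero arrows.
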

\begin{proof}
We know that $\tilde{\zG}$ is a graded gentle algebra, since $\zD^*_s$ is a simple coordinate. Then $\zG$ is also a gentle algebra, since it is obtained from $\tilde{\zG}$ by deleting the arrows with non-zero gradings, and by forgetting the relations which contain these arrows. Note that $\zG$ may not be connected.
\end{proof}
\begin{theorem}\label{theorem: last}
Let $A$ be a gentle algebra associated with a surface model $(\cals,\calm,\zD^*_A)$.
Let $\calh$ be a heart of $\dba$ arising from a simple-minded collection $\{S_1,\cdots,S_n\}$, with associated simple-minded dissection $(\zD_s,f)$. Then $\calh$ is equivalent to the module category of the gentle algebra $\zG$. Furthermore, any indecomposable object in $\calh$ is of the form $\P_{(\za,g)}$ for some zigzag arc $\za$ on $(\cals,\calm,\zD^*_s,F^*)$ and some grading $g$, or of the form $\P_{(\za,g,\lambda,m)}$ for some zigzag closed curve $\za$ on $(\cals,\calm,\zD^*_s,F^*)$ and some grading $g$ and $\lambda\in k^*$, $m\in \mathbb{N}$.
\end{theorem}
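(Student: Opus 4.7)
The plan is to reduce the statement to a computation of the endomorphism algebra of a silting object geometrically realized from $(\zD_s, f)$, and then apply Theorem~\ref{theorem:main arcs and objects} to the resulting gentle algebra $\zG$.

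\textbf{Step 1 (silting object from the simple-minded dissection).} By the Koenig--Yang correspondence~\cite{KY14}, the simple-minded collection $\P_{(\zD_s,f)}$ produced by Theorem~\ref{thm:simple minded dissection} admits a unique silting dual $T\in\dba$ satisfying $\calh\simeq \mo\End_\dba(T)$. Geometrically, the duality $\zD_s\leftrightarrow\zD_s^{\ast}$ (together with the dictionary of Figure~\ref{figure:dual-twist}) converts the simple dissection $\zD_s$ into the simple coordinate $\zD_s^{\ast}$, and the grading $f$ on $\zD_s$ transports, via Definition~\ref{definition:graded}, to the grading $F^{\ast}$ on $\zD_s^{\ast}$. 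I would realize $T$ explicitly as $\bigoplus_{\ell^{\ast}\in\zD_s^{\ast}}\P_{\ell^{\ast}}[n_{\ell^{\ast}}]$, where the shifts $n_{\ell^{\ast}}$ are chosen so that every minimal oriented intersection $\aaa^{\ast}$ contributes a homogeneous map of degree $F^{\ast}(\aaa^{\ast})$; the existence of such shifts (and the vanishing $\Hom(T,T[k])=0$ for $k>0$ that makes $T$ silting) is exactly what the simple-minded dissection condition $f_\za(s^\za_q)<f_\zb(s^\zb_q)$ enforces, since it forces $F(\aaa)\geq 1$ and hence $F^{\ast}(\aaa^{\ast})\leq 0$ for every minimal oriented intersection.

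\textbf{Step 2 (identification $\End_\dba(T)\cong\zG$).} By Proposition~\ref{prop:obj-in-derived-cat}, $\Hom_\dba(\P_{\ell_i^{\ast}},\P_{\ell_j^{\ast}}[k])$ is spanned by oriented intersections of the arcs $\ell_i^{\ast}$ and $\ell_j^{\ast}$, with composition dictated (via Remark~\ref{remark:yoneda}) by the gentle-algebra multiplication of $A(\zD_s^{\ast})=\tilde\zG$. Applying this to the shifted direct summands of $T$ and tracking the degree shifts, $\End_\dba(T)$ becomes the degree-zero part of the graded gentle algebra $\tilde\zG$ equipped with the grading $F^{\ast}$ of Definition~\ref{definition:graded2}(2), which is precisely $\zG$. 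Combining with Step~1 yields $\calh\simeq\mo\zG$.

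\textbf{Step 3 (indecomposable objects as graded zigzag curves).} Since $\zG$ is itself a gentle algebra (obtained from $A(\zD_s^{\ast})$ by deleting arrows of nonzero grading and the relations they participate in), it arises from the simple coordinate $\zD_s^{\ast}$ restricted to its degree-zero arrows on a surface naturally embedded in $(\cals,\calm)$. Theorem~\ref{theorem:main arcs and objects} applied to $\zG$ then gives a bijection between indecomposable $\zG$-modules and zigzag arcs (respectively, primitive closed zigzag curves) on the associated surface; these are exactly the zigzag curves on $(\cals,\calm,\zD_s^{\ast},F^{\ast})$ of Definition~\ref{definition:graded2}(1), i.e.\ those that only cross minimal oriented intersections of $\zD_s^{\ast}$ with $F^{\ast}=0$. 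Transporting these back to $\dba$ along the embedding $\calh\hookrightarrow\dba$, Theorem~\ref{theorem:object} identifies the resulting objects as $\P_{(\za,g)}$ or $\P_{(\za,g,\lambda,m)}$, with the grading $g$ determined by the homological position inherited from $\calh$.

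\textbf{The main obstacle.} The delicate point is Step~1: consistently lifting the grading data $f$ on $\zD_s$ to a system of shifts $\{n_{\ell^{\ast}}\}$ that makes $T$ silting and reproduces $F^{\ast}$ on every minimal oriented intersection. The compatibility conditions are nontrivial when an arc in $\zD_s^{\ast}$ participates in several minimal oriented intersections at different endpoints, and the verification that all these constraints are simultaneously solvable is exactly where the simple-minded dissection inequality is used. A secondary concern is that Koenig--Yang only guarantees $\calh$ is equivalent to modules over the (possibly $A_\infty$-enhanced) endomorphism algebra of a silting object; here the fact that $T$ is silting, so $\Hom(T,T[n])=0$ for $n>0$, automatically kills any higher $A_\infty$-structure and makes $\End_\dba(T)$ an ordinary algebra, namely $\zG$.
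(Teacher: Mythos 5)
Your overall architecture (pass through the Koenig--Yang silting dual $T$, identify $\End_\dba(T)$ with $\zG$, then invoke Theorem~\ref{theorem:main arcs and objects}) is a legitimate alternative to the paper's route, but your Step~1 has a concrete error that would need repairing. You realize $T$ as $\bigoplus_{\ell^\ast\in\zD_s^\ast}\P_{\ell^\ast}[n_{\ell^\ast}]$, but the arcs $\ell^\ast\in\zD_s^\ast$ are $\rpoint$-arcs, and the paper never attaches an object of $\dba$ to a $\rpoint$-arc: $\P_\za$ is only defined for $\bpoint$-arcs (or closed curves), with a grading read off against $\zD_p^\ast$. The silting dual of the simple-minded collection $\P_{(\zD_s,f)}$ should instead be assembled from the arcs of a graded \emph{projective dissection} (the $\bpoint$-arcs $t(\ell^\ast)$, cf.\ Theorem~\ref{theorem:projectives from simple coordinates} and the framework of \cite{APS23,CS23}), not from $\zD_s^\ast$. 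This is not only a notational slip: the shifts $n_{\ell^\ast}$ and the homogeneity degrees in your Step~2 would have to be recomputed with respect to the correct set of arcs, and your ``main obstacle'' paragraph worries about the consistency of a shift system on arcs that are not even the right ones.

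The paper sidesteps this entirely by working on the simple-minded side: it invokes \cite[Lemma~5.3]{KY14} only abstractly to get $\calh\simeq\mo B$ with $B$ the (gentle, by \cite{APS23}) endomorphism algebra of the silting dual, and then computes $B$ directly from the simple-minded data. The vertices of $B$ are the $S_i$, the arrows are a basis of $\Ext^1_\calh(S,S)=\Hom_\dba(S,S[1])$, i.e.\ the minimal oriented intersections $\aaa$ on $\zD_s$ with $F(\aaa)=1$, and the relations are read off at the marked points. Matching this with the duality $F^\ast(\aaa^\ast)=1-F(\aaa)$ identifies $B$ with the degree-zero subalgebra $\zG$ of $\tilde\zG$. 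No explicit geometric realization of $T$ is needed, so the problem you flag as the ``delicate point'' simply does not arise in the paper's argument; if you want to keep your route, you would need to correctly construct $T$ from the projective dissection $\zD_p$ (with grades dictated by $f$ through the twist) and then redo the degree bookkeeping in Step~2.

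Your Step~3 is in the same spirit as the paper's, but the phrase ``a surface naturally embedded in $(\cals,\calm)$'' is not justified: deleting the arrows of nonzero grading from $\tilde\zG$ does not produce a simple coordinate on a subsurface in any tautological sense (the quotient may be disconnected, and the surface of $\zG$ is obtained by cutting, not embedding). The paper avoids this by defining ``zigzag curve on $(\cals,\calm,\zD_s^\ast,F^\ast)$'' intrinsically as a zigzag curve compatible with the grading, and deducing the classification from the fact that objects of $\calh$ are built by iterated extensions (smoothings) of the simples $\P_{(\ell_i,f_{\ell_i})}$; that is the formulation you should use.
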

\begin{proof}

By \cite[Lemma 5.3]{KY14}, the heart $\calh$ is equivalent to the module category of the endomorphism algebra $B=kQ/I$ of the associated silting object, which is a gentle algebra by \cite[Proposition 3.7]{APS23}.
Since $S_1,\cdots,S_n$ are simples in $\calh$, the vertices of $Q$ correspond to the arcs in $\zD_s$. The arrows of $Q$ correspond to a basis of $\Ext^1_{\calh}(S,S)=\Hom_{\dba}(S,S[1])$, where $S=\bigoplus_{1\leqslant i\leqslant n}S_i$, and thus correspond to oriented intersections with grading one, between graded arcs in $(\zD_s,f)$. Moreover, for two arrows $\aaa:\P_{(\za,f_\za)}\rightarrow \P_{(\zb,f_\zb)}$ and $\bbb:\P_{(\zb,f_\zb)}\rightarrow \P_{(\zg,f_\zg)}$, the composition $\aaa\bbb$ belongs to $I$ if and only if  the common endpoint of $\za$ and $\zb$, and the common endpoint of $\zb$ and $\zg$ that respectively gives rise to $\aaa$ and $\bbb$ are different.

On the other hand, by the construction, $\tilde{\zG}$ is a non-positive graded algebra whose zero part $\zG$ is isomorphic to the algebra $B$, noticing that the grading $F^*(\aaa^*)$ is defined as $1-F(\aaa)$, where $\aaa$ gives rise to an arrow in $Q$ if and only if $F(\aaa)=1$ if and only if $F^*(\aaa^*)=0$.

By the construction, the simples $S_i\in \calh$ is of the form $\P_{(\ell_i,f_{\ell_i})}$ for a graded arc $(\ell_i,f_{\ell_i})$ with $\ell_i\in \zD_s$.
Furthermore, any indecomposable object in $\calh$ is iteratively generated by simples $\P_{(\ell_i,f_{\ell_i})}$, that is, obtained by iterated smoothing the arcs in $\zD_s$. Thus it essentially follows from Theorem \ref{theorem:main arcs and objects} that the indecomposable objects in $\calh$ are always arising from a zigzag curve on $(\cals,\calm,\zD^*_s,F^*)$. That is, the second statement follows.
\end{proof}

The following diagram depicts the constructions appearing in Theorem \ref{theorem: last}.
$$
\xymatrix{
\calh\ar@{<->}[rrr]^{\text{Proposition~} \ref{prop:simpleminded-to-t-str}}&&&\P_{(\zD_s,f)}\\
&&&\\
(\cals,\calm,\zD^*_s,F^*)\ar@{->}[uu]^{\text{Definition~} \ref{definition:graded2}}
\ar@{<-}[rrr]^{\text{Definition~} \ref{definition:graded}}
&&&{(\zD_s,f)}\ar@{<->}[uu]_{\text{Definition~} \ref{thm:simple minded dissection}}}
$$

\begin{remark}
For an indecomposable object $\P_{(\za,g)}$ or $\P_{(\za,g,\lambda,m)}$ in $\calh$, the grading $g$ is determined by the extensions that generates it, as well as the gradings of the arcs $\ell_i$ appearing in extensions.
%However, it seems that hard to give a direct characterization for this grading.

As a direct corollary of above theorem, we get a bijection between the homotopy classes of graded simple coordinates on the surface $(\cals,\calm,\zD^*_A)$ and the equivalent classes of the hearts in $\dba$ up to shift.
\end{remark}

\section{An example}\label{section:example}

In this section, we give a concrete example to illustrate the results we have obtained. We consider the following gentle algebra $A$:
\begin{center}
	{\begin{tikzpicture}[scale=.8,xscale=1,ar/.style={->,thick,>=stealth}]
			\draw(0,0)node(9){$9$}(2,0)node(8){$8$}(4,0)node(7){$7$}(6,0)node(5){$5$}
			(8,0)node(4){$4$}(10,0)node(3){$3$}(12,0)node(2){$2$}(14,0)node(1){$1$}
			(3.5,-2)node(10){$10$}(6,-2)node(6){$6$};
			
			\draw[ar](9) to node[above] {$\aaa_9$} (8);
			\draw[ar](8)to node[above] {$\aaa_8$} (7);
			\draw[ar](7)to node[above] {$\aaa_7$} (5);
			\draw[ar](5)to node[above] {$\aaa_4$} (4);
			\draw[ar](4)to node[above] {$\aaa_3$} (3);
			\draw[ar](3)to node[above] {$\aaa_2$} (2);
			\draw[ar](2)to node[above] {$\aaa_1$} (1);
			
			\draw[ar](7)to node[above] {$\aaa_6$} (6);
			\draw[ar](6)to node[right] {$\aaa_5$} (5);
			\draw[ar](6)to node[above] {$\aaa_{10}$} (10);
			
			\draw[bend right,dotted,thick]($(8)!.7!(7)-(0,.2)$) to ($(7)!.2!(6)-(.1,.1)$);
			\draw[bend left,dotted,thick]($(10)!.7!(6)+(0,.1)$) to ($(7)!.7!(6)-(0,.1)$);
			\draw[bend right,dotted,thick]($(6)!.7!(5)+(.1,0)$) to ($(5)!.3!(4)-(0,.1)$);
			\draw[bend right,dotted,thick]($(5)!.7!(4)-(0,.1)$) to ($(4)!.3!(3)-(0,.1)$);
			\draw[bend right,dotted,thick]($(4)!.7!(3)-(0,.1)$) to ($(3)!.3!(2)-(0,.1)$);
			\draw[bend right,dotted,thick]($(3)!.7!(2)-(0,.1)$) to ($(2)!.3!(1)-(0,.1)$);
	\end{tikzpicture}}
\end{center}

By Corollary \ref{corollary:fdim}, the finitistic dimension $\fd A$ is $5$, which equals the projective dimension of the injective module $I_6$, and also corresponds to the longest path with consecutive relations from $6$ to $1$.
The surface model associated to $A$ is an annulus $(\cals,\calm,\zD^*_s)$:
\begin{center}{
		\begin{tikzpicture}[scale=0.8]
			\draw[thick,fill=white] (0,0) circle (4cm);		
			\draw[thick,fill=gray] (0,0) circle (0.5cm);
			
			\path (0:4) coordinate (b1)
			(67.5:4) coordinate (b2)
			(120:4) coordinate (b3)
			(165:4) coordinate (b4)
			(195:4) coordinate (b5)
			(225:4) coordinate (b6)
			(255:4) coordinate (b7)
			(-78:4) coordinate (b8)
			(-45:4) coordinate (b9)
			(-90:.5) coordinate (b10);
			
			\path (45:4) coordinate (r1)
			(90:4) coordinate (r2)
			(150:4) coordinate (r3)
			(180:4) coordinate (r4)
			(210:4) coordinate (r5)
			(240:4) coordinate (r6)
			(270:4) coordinate (r7)
			(-60:4) coordinate (r8)
			(-22.5:4) coordinate (r9)
			(90:.5) coordinate (r10);

			\draw[red,thick]plot [smooth,tension=1] coordinates {(r7) (1.5,0) (r10)};
			\draw[red,thick]plot [smooth,tension=1] coordinates {(r7) (-1.5,0) (r10)};
			
			\draw[red,thick,bend left] (r1) to node[below]{\tiny$10$} (r2)
			(r3) to node[right]{\tiny$1$} (r4)
			(r4) to node[right]{\tiny$2$} (r5)
			(r5) to node[above]{\tiny$3$} (r6)
			(r6) to node[above]{\tiny$4$} (r7)
			(r7) to node[above]{\tiny$9$} (r8)
			(r7) to node[above]{\tiny$8$} (r9);
			\draw[red] (-.9,-1.5) node {\tiny$5$};
			\draw[red] (.9,-1.5) node {\tiny$7$};
			
			\draw[red,thick] (r10) to node[right]{\tiny$6$}  (r2);
			
			\draw (b1) node[right] {$q$};
			\draw (b3) node[above] {$p$};
			\draw (2,1) node {$\za$};
			
			\draw[very thick]plot [smooth,tension=1] coordinates {(b1) (0,1) (-.8,-.5) (1,0) (b3)};
			
			\draw[thick,fill=white] (b1) circle (.08cm)
			(b2) circle (.08cm)
			(b3) circle (.08cm)
			(b4) circle (.08cm)
			(b5) circle (.08cm)
			(b6) circle (.08cm)
			(b7) circle (.08cm)
			(b8) circle (.08cm)
			(b9) circle (.08cm)
			(b10) circle (.08cm);

			\draw[thick,red,fill=red] (r1) circle (.08cm)
			(r2) circle (.08cm)
			(r3) circle (.08cm)
			(r4) circle (.08cm)
			(r5) circle (.08cm)
			(r6) circle (.08cm)
			(r7) circle (.08cm)
			(r8) circle (.08cm)
			(r9) circle (.08cm)
			(r10) circle (.08cm);	
	\end{tikzpicture}}
\end{center}

(1) We consider an arc $\za$ with endpoints $p$ and $q$ depicted in above picture.
By {\bf Construction 1}, the associated module is
$$M_\za=\begin{array}{c}
	6~7 \\
	\text{~}~\text{~}5~6.
\end{array}$$
Note that $p$ and $q$ belong to the polygons $\bbp=\{1,2,3,4,5,6\}$ and $\bbp'=\{6,7,8,10\}$ respectively.
The weights of $\za$ at $p$ and $q$ are
$$w_p(\za)=6-1=5, w_q(\za)=2-1=1.$$
Therefore by Corollary \ref{corollary:proj-res-string}, the projective dimension of $M_\za$ is
$$\pd M_\za=max\{w_p(\za),w_q(\za)\}=5.$$
In fact, the string associated to $M_\za$ is
$$\omega=(\aaa_5)(\aaa_7^{-1})(\aaa_6).$$
After combining the projective modules at the same degree in the complex given in Proposition \ref{prop:proj-res}, the minimal projective resolution of $M_\za$ is as follows:
\[
\begin{tikzpicture}[scale=.5,xscale=3,yscale=3,ar/.style={->,thick}]
	\draw(-7,0)node(v8){$\P_\za=$}
	(-6.5,0)node(v7){$P_1$}
	(-5.5,0)node(v6){$P_2$}
	(-4.5,0)node(v5){$P_3$}
	(-3.5,0)node(v4){$P_4$}
	(-1,0)node(v2){$P_{10}\oplus P_5\oplus P_5$}
	(2,0)node(v1){$P_6\oplus P_7$,};
	\draw[ar](v7)edge(v6)
	(v6)edge(v5)
	(v5)edge(v4)
	(v4)edge(v2)
	(v2)edge(v1);
	\draw[black]
	($(v7)!.5!(v6)$) node[above] {$\aaa_1$}
	($(v6)!.5!(v5)$) node[above] {$\aaa_2$}
	($(v5)!.5!(v4)$) node[above] {$\aaa_3$}
	($(v4)!.4!(v2)$) node[above] {$\left(
		\begin{array}{ccc}
			0 & 0 & \aaa_4 \\
		\end{array}
		\right)
		$}
	($(v2)!.5!(v1)$) node[above] {$\left(
		\begin{array}{cc}
			\aaa_{10} & 0 \\
			\aaa_5 & \aaa_7 \\
			0 & \aaa_6\aaa_5 \\
		\end{array}
		\right)
		$};
\end{tikzpicture}
\]
%that is we have an exact sequence
%\[
%\begin{tikzpicture}[scale=.5,xscale=2.7,yscale=3,ar/.style={->,thick}]
%	\draw(-7,0)node(v8){$0$}
%	(-6.3,0)node(v7){$1$}
%	(-5.5,0)node(v6){$\begin{array}{c}
%			2 \\
%			1
%		\end{array}$}
%	(-4.5,0)node(v5){$\begin{array}{c}
%			3 \\
%			2
%		\end{array}$}
%	(-3.5,0)node(v4){$\begin{array}{c}
%			4 \\
%			3
%		\end{array}$}
%	(-1.8,0)node(v2){$10\oplus \begin{array}{c}
%			5 \\
%			4
%		\end{array}\oplus \begin{array}{c}
%			5 \\
%			4
%		\end{array}$}
%	(.5,0)node(v1){$\begin{array}{c}
%			~6~  \\
%			10~5
%		\end{array}
%		\oplus \begin{array}{c}
%			7 \\
%			5~6 \\
%			4~~~5
%		\end{array}
%		$}
%	(2.2,0)node(v0){$\begin{array}{c}
%			6~7 \\
%			\text{~}~\text{~}5~6
%		\end{array}
%		$}
%	(3.2,0)node(v){$0.$};
%	\draw[ar](v7)edge(v6)
%	(v8)edge(v7)
%	(v6)edge(v5)
%	(v5)edge(v4)
%	(v4)edge(v2)
%	(v2)edge(-.4,0)
%	(v1)edge(1.9,0);%
%	\draw[->,thick](2.6,0)to(3,0);
%\end{tikzpicture}
%\]
which can be read directly by using the projective coordinate $\zD^*_p=t(\zD_s)$ (green arcs):
\begin{center}{
		\begin{tikzpicture}[scale=0.8]
			\draw[thick,fill=white] (0,0) circle (4cm);		
			\draw[thick,fill=gray] (0,0) circle (0.5cm);
			
			\path (0:4) coordinate (b1)
			(67.5:4) coordinate (b2)
			(120:4) coordinate (b3)
			(165:4) coordinate (b4)
			(195:4) coordinate (b5)
			(225:4) coordinate (b6)
			(255:4) coordinate (b7)
			(-78:4) coordinate (b8)
			(-45:4) coordinate (b9)
			(-90:.5) coordinate (b10);
			
			\path (45:4) coordinate (r1)
			(90:4) coordinate (r2)
			(150:4) coordinate (r3)
			(180:4) coordinate (r4)
			(210:4) coordinate (r5)
			(240:4) coordinate (r6)
			(270:4) coordinate (r7)
			(-60:4) coordinate (r8)
			(-22.5:4) coordinate (r9)
			(90:.5) coordinate (r10);
			
			\draw[dark-green!50,thick]plot [smooth,tension=1] coordinates {(r9) (1.5,.3) (r10)};
			%
			%\draw[dark-green,thick]plot [smooth,tension=1] coordinates {(r2) (-.6,0) (.5,-.6) (.6,.3) (r10)};
			
			\draw [dark-green!50,thick,smooth] (r2) .. controls (-1,.5) and (-.8,-.8) .. (0,-.8) .. controls (.8,-.6) and (1,.5) ..(r10);
			
			\draw[dark-green!50,thick] (r2) to node[below]{\tiny$1$} (r3);
			\draw[dark-green!50,thick] (r1) to node[right]{\tiny$10$} (r9);
			\draw[dark-green!50,thick](r2) to node[below]{\tiny$2$} (r4);
			\draw[dark-green!50,thick] (r2) to node[left]{\tiny$3$} (r5);
			\draw[dark-green!50,thick] (r2) to node[left]{\tiny$4$} (r6);
			\draw[dark-green!50,thick] (r2) to node[above]{\tiny$6$} (r9);
			\draw[dark-green!50,thick, bend left] (r8) to node[above]{\tiny$8$} (r9);
			\draw[dark-green!50,thick, bend left] (r7) to node[above]{\tiny$9$} (r8);

			\draw[very thick]plot [smooth,tension=1] coordinates {(b1) (.3,1) (-.5,-1) (1.2,0) (b3)};
			
			\draw (b1) node[right] {$q$};
			\draw (b3) node[above] {$p$};
			\draw (2.8,.8) node {$\za$};
			\draw (-.4,1.4)[dark-green!50] node {\tiny$5$};
			\draw (1.6,0)[dark-green!50] node {\tiny$7$};

			\draw[thick,fill=white] (b1) circle (.08cm)
			(b2) circle (.08cm)
			(b3) circle (.08cm)
			(b4) circle (.08cm)
			(b5) circle (.08cm)
			(b6) circle (.08cm)
			(b7) circle (.08cm)
			(b8) circle (.08cm)
			(b9) circle (.08cm)
			(b10) circle (.08cm);

			\draw[thick,red,fill=red] (r1) circle (.08cm)
			(r2) circle (.08cm)
			(r3) circle (.08cm)
			(r4) circle (.08cm)
			(r5) circle (.08cm)
			(r6) circle (.08cm)
			(r7) circle (.08cm)
			(r8) circle (.08cm)
			(r9) circle (.08cm)
			(r10) circle (.08cm);	
	\end{tikzpicture}}
\end{center}

(2) The co-weights of $\za$ at $p$ and $q$ are
$$cw_p(\za)=1-1=0, cw_q(\za)=3-1=2.$$
Therefore by Corollary \ref{corollary:proj-res-string}, the injective dimension of $M_\za$ is
$$\id M_\za=max\{cw_p(\za),cw_q(\za)\}=2.$$
The minimal injective resolution of $M_\za$ is:
\[
\begin{tikzpicture}[scale=.5,xscale=3,yscale=3,ar/.style={->,thick}]
	\draw(-5.8,0)node(v1){$\I_\za=$}
	(-5,0)node(v2){$P_5\oplus P_6$}
	(-2.5,0)node(v3){$P_7\oplus P_7$}
	(-1,0)node(v4){$P_8$};
	\draw[ar](v2)edge(v3)
	(v3)edge(v4);
	\draw[black]
	($(v2)!.5!(v3)$) node[above] {\tiny$\left(
		\begin{array}{cc}
			\aaa_6\aaa_5 & \aaa_7 \\
			0 & \aaa_6 \\
		\end{array}
		\right)
		$}
	($(v3)!.6!(v4)$) node[above] {\tiny$\aaa_8,$};
\end{tikzpicture}
\]
%that is we have an exact sequence
%\[
%\begin{tikzpicture}[scale=.5,xscale=2.7,yscale=3,ar/.style={->,thick}]
%	\draw(-5,0)node(v8){$0$}
%	(-4,0)node(v7){$\begin{array}{c}
%			6~7 \\
%			\text{~}~\text{~}5~6
%		\end{array}$}
%	(-1.8,0)node(v2){$\begin{array}{c}
%			\text{~}~\text{~}\text{~}~\text{~}~9 \\
%			7\text{~}~\text{~}8\\
%			6~7\\
%			5
%		\end{array}\oplus \begin{array}{c}
%			7 \\
%			6
%		\end{array}$}
%	(.5,0)node(v1){$\begin{array}{c}
%			9  \\
%			8\\
%			7
%		\end{array}
%		\oplus \begin{array}{c}
%			9  \\
%			8\\
%			7
%		\end{array}     $}
%	(2.2,0)node(v0){$\begin{array}{c}
%			9 \\
%			8
%		\end{array}$}
%	(3.2,0)node(v){$0,$};
%	
%	\draw[->,thick](-4.8,0)to(-4.3,0);
%	\draw[->,thick](-3.5,0)to(-2.6,0);
%	\draw[->,thick](-.9,0)to(-.1,0);
%	\draw[->,thick](1.1,0)to(2,0);
%	\draw[->,thick](2.4,0)to(3,0);
%\end{tikzpicture}
%\]
which can be seen by using the injective coordinate $\zD^*_i=t^{-1}(\zD_s)$ (cf. Remark \ref{remark:inj}):
\begin{center}{
		\begin{tikzpicture}[scale=0.8]
			\draw[thick,fill=white] (0,0) circle (4cm);		
			\draw[thick,fill=gray] (0,0) circle (0.5cm);
			
			\path (0:4) coordinate (b1)
			(67.5:4) coordinate (b2)
			(120:4) coordinate (b3)
			(165:4) coordinate (b4)
			(195:4) coordinate (b5)
			(225:4) coordinate (b6)
			(255:4) coordinate (b7)
			(-78:4) coordinate (b8)
			(-45:4) coordinate (b9)
			(-90:.5) coordinate (b10);
			
			\path (45:4) coordinate (r1)
			(90:4) coordinate (r2)
			(150:4) coordinate (r3)
			(180:4) coordinate (r4)
			(210:4) coordinate (r5)
			(240:4) coordinate (r6)
			(270:4) coordinate (r7)
			(-60:4) coordinate (r8)
			(-22.5:4) coordinate (r9)
			(90:.5) coordinate (r10);
			\draw[blue!50,thick] (r3) to node[below]{\tiny$1$} (r4);
			\draw[blue!50,thick] (r2) to node[below]{\tiny$10$} (r1);
			\draw[blue!50,thick] (r10) to node[above]{\tiny$5$} (r3);
			\draw[blue!50,thick](r3) to node[right]{\tiny$2$} (r5);
			\draw[blue!50,thick] (r3) to node[right]{\tiny$3$} (r6);
			\draw[blue!50,thick] (r3) to node[right]{\tiny$4$} (r7);
			\draw[blue!50,thick] (r3) to node[above]{\tiny$6$} (r1);
			\draw[blue!50,thick, bend left] (r9) to node[left]{\tiny$8$} (r1);
			\draw[blue!50,thick, bend left] (r8) to node[above]{\tiny$9$} (r9);
			%\draw[blue,thick]plot [smooth,tension=1] coordinates {(r1) (.8,-1.2) (-1.2,-.2) (r10)};
			
			\draw [blue!50,thick,smooth] (r1) .. controls (2,0) and (1,-3) .. (-.8,-1.2) .. controls (-1.3,-.5) and (-.8,.5) ..(r10);
			
			\draw (b1) node[right] {$q$};
			\draw (b3) node[above] {$p$};
			\draw (2.8,.8) node {$\za$};
			\draw[blue!50] (1.5,-1.2) node {\tiny$7$};
			
			\draw[very thick]plot [smooth,tension=1] coordinates {(b1) (.3,1) (-.5,-1) (1.2,0) (b3)};
			
			\draw[thick,fill=white] (b1) circle (.08cm)
			(b2) circle (.08cm)
			(b3) circle (.08cm)
			(b4) circle (.08cm)
			(b5) circle (.08cm)
			(b6) circle (.08cm)
			(b7) circle (.08cm)
			(b8) circle (.08cm)
			(b9) circle (.08cm)
			(b10) circle (.08cm);

			\draw[thick,red,fill=red] (r1) circle (.08cm)
			(r2) circle (.08cm)
			(r3) circle (.08cm)
			(r4) circle (.08cm)
			(r5) circle (.08cm)
			(r6) circle (.08cm)
			(r7) circle (.08cm)
			(r8) circle (.08cm)
			(r9) circle (.08cm)
			(r10) circle (.08cm);	
	\end{tikzpicture}}
\end{center}

(3) Now let $\zb$ be the arc associated to the simple module $S_1$, see the following picture.
Then $\za$ and $\zb$ intersect at the point $p$, which gives rise to an oriented intersection $\aaa$ from $\za$ to $\zb$ of weight $5$.
Therefore $\aaa$ corresponds to a map in $\Ext^5(M_\za,M_\zb)$ by Theorem \ref{theorem:main-extensions}.

\begin{center}{
		\begin{tikzpicture}[scale=0.8]
			\draw[thick,fill=white] (0,0) circle (4cm);		
			\draw[thick,fill=gray] (0,0) circle (0.5cm);
			
			\path (0:4) coordinate (b1)
			(67.5:4) coordinate (b2)
			(120:4) coordinate (b3)
			(165:4) coordinate (b4)
			(195:4) coordinate (b5)
			(225:4) coordinate (b6)
			(255:4) coordinate (b7)
			(-78:4) coordinate (b8)
			(-45:4) coordinate (b9)
			(-90:.5) coordinate (b10);
			
			\path (45:4) coordinate (r1)
			(90:4) coordinate (r2)
			(150:4) coordinate (r3)
			(180:4) coordinate (r4)
			(210:4) coordinate (r5)
			(240:4) coordinate (r6)
			(270:4) coordinate (r7)
			(-60:4) coordinate (r8)
			(-22.5:4) coordinate (r9)
			(90:.5) coordinate (r10);
			
			\draw[red!50,thick]plot [smooth,tension=1] coordinates {(r7) (1.5,0) (r10)};
			\draw[red!50,thick]plot [smooth,tension=1] coordinates {(r7) (-1.5,0) (r10)};
			
			\draw[red!50,thick,bend left] (r1) to node[below]{\tiny$10$} (r2)
			(r3) to node[right]{\tiny$1$} (r4)
			(r4) to node[right]{\tiny$2$} (r5)
			(r5) to node[above]{\tiny$3$} (r6)
			(r6) to node[above]{\tiny$4$} (r7)
			(r7) to node[above]{\tiny$9$} (r8)
			(r7) to node[above]{\tiny$8$} (r9);
			\draw[red!50] (-.9,-1.5) node {\tiny$5$};
			\draw[red!50] (.9,-1.5) node {\tiny$7$};
			\draw[red!50,thick] (r10) to node[right]{\tiny$6$}  (r2);
			
			\draw (b1) node[right] {$q$};
			\draw (b3) node[above] {$p$};
			\draw (2,1.1) node {\tiny$\za$};
			\draw (2,.4) node {\tiny$\zg_5$};
			
			\draw[black,very thick,bend left] (b3) to node[above]{\tiny$\zb$} (b4)
			(b4) to (b5)
			(b5) to (b6)
			(b6) to (b7);
			\draw[black,very thick] (b7) to node[right]{\tiny$\zg_4$} (b10);                         \draw[] (-3.35,0) node {\tiny$\zg_1$};
			\draw[] (-2.9,-1.7) node {\tiny$\zg_2$};
			\draw[] (-2,-2.7) node {\tiny$\zg_3$};
			%\draw [very thick,smooth,black!60] (b1) .. controls (2,0) and (1,-3) .. (-.8,-1.2) .. controls (-1.3,-.5) and (-.8,.5) ..(b10);
			
			\draw[thick,dashed,gray]plot [smooth,tension=1] coordinates {(b3) (-2.7,.5) (b5)};
			\draw[thick,dashed,gray]plot [smooth,tension=1] coordinates {(b3) (-2.2,1.1) (b6)};
			\draw[thick,dashed,gray]plot [smooth,tension=1] coordinates {(b3) (-1.8,.5)(b7)};
			\draw[thick,dashed,gray]plot [smooth,tension=1] coordinates {(b3) (-1.2,-.1)(b10)};
			
			\draw[black,very thick,bend left,->,>=stealth] (-1.4,2.95) to node[below]{\tiny$\aaa$} (-2.15,2.8);
			
			\draw[very thick]plot [smooth,tension=1] coordinates {(b1) (-.2,1) (-.6,-.8) (.8,-.1) (0,.7) (-.7,-.1) (b10)};
			
			\draw[very thick]plot [smooth,tension=1] coordinates {(b1) (-.2,1.2) (-.8,-1) (1.2,0) (b3)};
			
			\draw[thick,fill=white] (b1) circle (.08cm)
			(b2) circle (.08cm)
			(b3) circle (.08cm)
			(b4) circle (.08cm)
			(b5) circle (.08cm)
			(b6) circle (.08cm)
			(b7) circle (.08cm)
			(b8) circle (.08cm)
			(b9) circle (.08cm)
			(b10) circle (.08cm);

			\draw[thick,red,fill=red] (r1) circle (.08cm)
			(r2) circle (.08cm)
			(r3) circle (.08cm)
			(r4) circle (.08cm)
			(r5) circle (.08cm)
			(r6) circle (.08cm)
			(r7) circle (.08cm)
			(r8) circle (.08cm)
			(r9) circle (.08cm)
			(r10) circle (.08cm);
	\end{tikzpicture}}
\end{center}

Furthermore, by Theorem \ref{theorem:yoneda}, the associated Yoneda $5$-extension is as follows:
\begin{gather*}
	\xymatrix@C=0.7cm
	{[\aaa]=0\ar[r]^{} & M_{\zb}\ar[r] & M_{\zg_1}\ar[r] & M_{\zg_2}\ar[r] & M_{\zg_3}\ar[r] & M_{\zg_4}\ar[r] & M_{\zg_5}\ar[r] & M_{\za}\ar[r]& 0},
\end{gather*}
that is,
\begin{gather*}
	\xymatrix@C=0.7cm
	{0\ar[r] & 1 \ar[r] & {\begin{array}{c}
				2 \\
				1
		\end{array}}
		\ar[r] & {\begin{array}{c}
				3 \\
				2
		\end{array}}
		\ar[r] & {\begin{array}{c}
				4 \\
				3
		\end{array}}
		\ar[r] & {\begin{array}{c}
				5 \\
				4
		\end{array}}
		\ar[r] & {\begin{array}{c}
				6~7 \\
				\text{~}~\text{~}5~6\\
				\text{~}~\text{~}\text{~}~\text{~}~\text{~}5
		\end{array}}
		\ar[r] & {\begin{array}{c}
				6~7 \\
				\text{~}~\text{~}5~6
		\end{array}}
		\ar[r]& 0}.
\end{gather*}

(4) Now we give an example of the geometric model of a heart in $\dba$.
As used in Section \ref{section:heart}, in the following we denote by $\zD^*_A$ the initial simple coordinate associate to $A$.
We consider a simple-minded collection
$$\P_{(\zD_s,f)}=\{S_i, i\neq 4,5\}\cup\{S_5[-1], \begin{array}{c}
	5 \\
	4
\end{array}
\},$$
in $\dba$. By the construction given in Definition \ref{definition:graded}, the associated graded simple coordinate $(\zD^*_s,F^*)$ is depicted in red arcs as follows
\begin{center}{
		\begin{tikzpicture}[scale=0.8]
			
			\draw[thick,fill=white] (0,0) circle (4cm);
			
			\path (0:4) coordinate (b1)
			(67.5:4) coordinate (b2)
			(120:4) coordinate (b3)
			(165:4) coordinate (b4)
			(195:4) coordinate (b5)
			(225:4) coordinate (b6)
			(255:4) coordinate (b7)
			(-78:4) coordinate (b8)
			(-45:4) coordinate (b9)
			(-90:.5) coordinate (b10);
			
			\path (45:4) coordinate (r1)
			(90:4) coordinate (r2)
			(150:4) coordinate (r3)
			(180:4) coordinate (r4)
			(210:4) coordinate (r5)
			(240:4) coordinate (r6)
			(270:4) coordinate (r7)
			(-60:4) coordinate (r8)
			(-22.5:4) coordinate (r9)
			(90:.5) coordinate (r10);

			\draw[red!50,thick]plot [smooth,tension=1] coordinates {(r7) (1.5,0) (r10)};
			\draw[red!50,thick]plot [smooth,tension=1] coordinates {(r6) (-1.5,0) (r10)};
			
			\draw[red!50,thick,bend left] (r1) to node[below]{\tiny$10$} (r2)
			(r3) to node[right]{\tiny$1$} (r4)
			(r4) to node[right]{\tiny$2$} (r5)
			(r5) to node[above]{\tiny$3$} (r6)
			(r6) to node[above]{\tiny$4$} (r7)
			(r7) to node[above]{\tiny$9$} (r8)
			(r7) to node[above]{\tiny$8$} (r9);
			\draw[red!50] (-1.7,-1.5) node {\tiny$5$};
			\draw[red!50] (.9,-1.5) node {\tiny$7$};
			
			\draw[red!50,thick] (r10) to node[right]{\tiny$6$}  (r2);
			
			\draw[black,very thick,bend left] (b3) to (b4)
			(b5) to (b6)
			(b6) to (b7)
			(b4) to (b5)
			(b8) to (b9)
			(b9) to (b1)
			(b1) to (b2)
			(b2) to (b3);
			\draw[black,very thick]    (b6) to (b10)
			(b7) to (b10)
			(b8) to (b10)
			(b9) to (b10)
			(b3) to (b6)
			(b3) to (b7);
			\draw[black,very thick]plot [smooth,tension=1] coordinates {(b1) (.5,1.5) (b3)};
			\draw[black,very thick]plot [smooth,tension=1] coordinates {(b8) (1.9,-1.9) (b1)};
			\draw[black,very thick]plot [smooth,tension=1] coordinates {(b3) (-2.8,0) (b5)};
			\draw[black,very thick]plot [smooth,tension=1] coordinates {(b3) (-1,.1) (b10)};
			\draw[black,very thick]plot [smooth,tension=1] coordinates {(b10) (1.8,-.7) (b1)};
			
			\draw[very thick] (0,.2) circle (.7cm);			
			\draw[thick,fill=gray] (0,0) circle (0.5cm);
			
			\draw[thick,fill=white] (b1) circle (.08cm)
			(b2) circle (.08cm)
			(b3) circle (.08cm)
			(b4) circle (.08cm)
			(b5) circle (.08cm)
			(b6) circle (.08cm)
			(b7) circle (.08cm)
			(b8) circle (.08cm)
			(b9) circle (.08cm)
			(b10) circle (.08cm);

			\draw[thick,red,fill=red] (r1) circle (.08cm)
			(r2) circle (.08cm)
			(r3) circle (.08cm)
			(r4) circle (.08cm)
			(r5) circle (.08cm)
			(r6) circle (.08cm)
			(r7) circle (.08cm)
			(r8) circle (.08cm)
			(r9) circle (.08cm)
			(r10) circle (.08cm);	
	\end{tikzpicture}}
\end{center}
where for an arrow $\aaa^*$, the associated grading $F^*(\aaa^*)$ equals zero, excepting for the arrows $\aaa^*_1: 6 \longrightarrow 5$ and $\aaa^*_2: 7 \longrightarrow 4$, where $F^*(\aaa^*_1)=-1$ and $F^*(\aaa^*_2)=1$.
Furthermore, by Theorem \ref{theorem: last}, the indecomposable objects in the heart $\calh$ generated by $\P_{(\zD_s,f)}$ are zigzag arcs on $(\cals,\calm)$ with respect to the graded simple coordinate $(\zD^*_s,F^*)$, which are the black arcs in the picture.

\begin{remark}\label{remark:last}
	Note that the simple coordinate $\zD^*_s$ is obtained from $\zD^*_A$ by certain flip, that is, we replace
	the arc $5$ in $\zD^*_A$ by the smoothing of the arcs $5$ and $4$, and we keep the other arcs in $\zD^*_A$.
	In fact the heart $\calh$ is the (simple) HRS-tilting \cite{HRS96,W10} of the torsion pair $(S_5,S_5^{\perp})$ in the module category $\ma$, or equivalently, the simple-minded collection $\P_{(\zD_s,f)}$ is the mutation of the canonical simple-minded collection consisting of simples in $\ma$.
	
	The above phenomenon that explains the (simple) HRS-tilting as a certain flip of the associated graded simple coordinate is a general phenomenon.
	In fact, the authors explain the silting mutations as flips of dissections in \cite{CS23b}, noticing that there is a kind of duality between silting objects and simple-minded collections, see for example in \cite{KY14}.
\end{remark}

\end{document}